\newcommand{\multiline}[1]{%
  \begin{tabularx}{\dimexpr\linewidth-\ALG@thistlm}[t]{@{}X@{}}
    #1
  \end{tabularx}
}
\newtheorem{lemma}{Lemma}[section]
\newtheorem{theorem}[lemma]{Theorem}
\newtheorem{corollary}[lemma]{Corollary}
\newcommand{\1}{\ensuremath{\mathbbm{1}}}
\providecommand{\N}{{\ensuremath{\mathbbm{N}}}}
\providecommand{\Z}{{\ensuremath{\mathbbm{Z}}}}
\providecommand{\R}{{\ensuremath{\mathbbm{R}}}}
\providecommand{\E}{{\ensuremath{\mathbb{E}}}}
\renewcommand{\P}{{\ensuremath{\mathbb{P}}}}
\newcommand{\eps}{{\ensuremath{\varepsilon}}}
\newcommand{\Var}{{\ensuremath{\operatorname{Var}}}}
\newcommand{\Lip}{{\ensuremath{\operatorname{Lip}}}}
\newcommand{\funcF}{F}
\newcommand{\LipConst}{L}
\title{Multilevel Picard iterations for solving smooth\\ semilinear parabolic heat equations}
\author{Weinan E$^{1} $, Martin Hutzenthaler$^{2}$, Arnulf Jentzen$^{3}$ \& Thomas Kruse$^{4}$
\bigskip
\\
\small{$^1$ Department of Mathematics and 
Program in Applied and Computational Mathematics,}
\\
\small{Princeton University, Princeton, NJ 08544-1000, USA,
e-mail: weinan@math.princeton.edu}
\smallskip
\\
\small{$^2$ Faculty of Mathematics, University of Duisburg-Essen,}
\\
\small{45117 Essen, Germany, e-mail: martin.hutzenthaler@uni-due.de}
\smallskip
\\
\small{$^3$ Seminar f\"ur Angewandte Mathematik,
ETH Zurich,}
\\
\small{8092 Z\"urich, Switzerland, e-mail: arnulf.jentzen@sam.math.ethz.ch}
\smallskip
\\
\small{$^4$ Faculty of Mathematics, University of Duisburg-Essen,}
\\
\small{45117 Essen, Germany, e-mail: thomas.kruse@uni-due.de}
}
\begin{document}

\maketitle
\makeatletter
\let\@makefnmark\relax
\let\@thefnmark\relax
\@footnotetext{\emph{AMS 2010 subject classification:} 65M75}
\@footnotetext{\emph{Key words and phrases:}
curse of dimensionality, high-dimensional PDEs, high-dimensional semilinear BSDEs, multilevel 
Picard iteration, multilevel Monte Carlo method
  }
\makeatother

\begin{abstract}
We introduce a new family of numerical algorithms for approximating solutions of general high-dimensional semilinear parabolic partial differential equations at single space-time points.
 The algorithm is obtained through a delicate combination of 
the Feynman-Kac and the Bismut-Elworthy-Li formulas,
 and  an approximate decomposition of the Picard fixed-point
iteration with multilevel accuracy.
The algorithm has been tested on a variety of semilinear partial differential equations
that arise in physics and finance, with very satisfactory results.
Analytical tools needed for the analysis of such algorithms, including
a semilinear Feynman-Kac formula, a new class of semi-norms and their
recursive inequalities, are also introduced.
They allow us to prove
for semilinear heat equations 
with gradient-independent nonlinearity
that the computational 
complexity of the proposed algorithm  is bounded by
$O(d\,\eps^{-(4+\delta)})$ for any $\delta \in (0,\infty)$ under suitable assumptions, where
$d\in \N$ is the dimensionality of the problem and 
$\eps\in(0,\infty)$ is the prescribed accuracy.

\end{abstract}


\section{Introduction and main results}
High-dimensional partial differential equations (PDEs) arise naturally in many important areas including
quantum mechanics, statistical physics, financial engineering, economics, etc. Yet developing efficient and practical algorithms for these 
high-dimensional PDEs has been a long-standing problem and indeed one of the most challenging tasks in mathematics.
The difficulty lies in the ``curse of dimensionality'' \cite{Bellman}, i.e., the  complexity of the problem goes up 
exponentially as a function of dimension, which is a well-known obstacle that is also  at the heart of many other  important
subjects such as high-dimensional statistics and the modeling of many-body systems.

For  \emph{linear} parabolic PDEs, the  Feynman-Kac formula
establishes an explicit representation of the solution of the PDE 
as the expectation of the solution of an appropriate 
stochastic differential equation (SDE).
Monte Carlo methods 
together with suitable discretizations of the SDE (see, e.g.,
\cite{m55,kp92,hjk12,HutzenthalerJentzen2014})
then allow to approximate the solution at any single point in space-time 
with a computational complexity  that grows  
as $O(d\eps^{-(2+\delta)})$ for any $\delta > 0$ where 
$d$ is the dimensionality of the problem and $\eps$ is the accuracy required
(cf., e.g., \cite{GrahamTalay2013Stochastic_simulation_and_Monte_Carlo_methods,g08b,h98,heinrich01}).

In the seminal papers~\cite{PardouxPeng1990,Peng1991,PardouxPeng1992},
Pardoux \& Peng 
established a generalized \emph{nonlinear Feynman-Kac formula} that gives
 an explicit representation of the solutions of  a semilinear parabolic PDE
through the solution of an appropriate 
backward stochastic differential equation (BSDE).
Solving the BSDEs numerically, however, requires in general suitable discretizations of nested
conditional expectations 
(see, e.g., \cite{BouchardTouzi2004,Zhang2004})
and the straightforward Monte Carlo method applied 
to these nested conditional expectations results in an algorithm
with a computational complexity that grows  
polynomially in $d$ but exponentially  in $\eps^{-1}$.
Other discretization methods for the 
nested conditional expectations proposed in the literature include
the quantization tree method (see~\cite{BallyPages2003a}),
the regression method based on Malliavin calculus or based on kernel estimation (see~\cite{BouchardTouzi2004}),
the projection on function spaces method (see~\cite{GobetLemorWarin2005}),
the cubature on Wiener space method (see~\cite{CrisanManolarakis2012}),
and the Wiener chaos decomposition method (see~\cite{BriandLabart2014}).
None of these algorithms meets  the requirement that the computational complexity grows at most polynomially
both in $d$ and $\eps^{-1}$
(see \cite[Subsections 6.1--6.6]{EHutzenthalerJentzenKruse2017}
for a detailed discussion of these approximation methods).

Another probabilistic representation for the
solutions of some semilinear parabolic PDEs with polynomial nonlinearity
has been established in Skorohod \cite{Skorohod1964} by means 
of \emph{branching diffusion processes}.
Recently this classical representation
has been extended to more general analytic nonlinearities
\cite{Henry-Labordere2012, Henry-LabordereTanTouzi2014, Henry-LabordereEtAl2016}.
This probabilistic representation has been successfully used 
to obtain a Monte Carlo approximation method for semilinear parabolic PDEs
with a computational complexity that grows polynomially both in $d$ and $\eps^{-1}$.
However, not only is this method only applicable to a special class of PDEs,  it also
requires the terminal/initial condition to be quite small 
(see  \cite[Subsection 6.7]{EHutzenthalerJentzenKruse2017}
for a detailed discussion).

In this paper
we propose a new family of numerical algorithms for approximating solutions of general high-dimensional
semilinear parabolic PDEs (and BSDEs) at single space-time points; 
see~\eqref{eq:def_scheme} below for the definition of our approximations.
For semilinear heat equations with gradient-independent nonlinearities
we prove that the computational complexity
(see Corollary \ref{c:rate4} below for the precise meaning hereof)
of our proposed algorithm is $O(d\,\eps^{-(4+\delta)})$
for any $\delta > 0$ under suitable assumptions including the strong smoothness assumption
that the constant in~\eqref{eq:constant_rate4} below is finite; see Corollary \ref{c:rate4} below for details.
Under the assumptions of Corollary \ref{c:rate4}, to the best of our knowledge, no implementable
approximation method was known in the literature to overcome the curse of dimensionality.
The analysis of more general coefficient functions and nonlinearities is deferred to future publications.
The algorithm, which we will call ``multilevel Picard iteration'',
 is a delicate combination of the Feynman-Kac and Bismut-Elworthy-Li formulas,
 and a decomposition of the
 Picard iteration with multilevels of accuracy.
 The efficiency and accuracy of the proposed algorithm has been tested on a variety of
 semilinear parabolic PDEs that arise in physics and finance.
 These details are presented in \cite{EHutzenthalerJentzenKruse2017}. 
To get a feeling about the performance of the algorithm:
 To evaluate $u(1, 0)$ for the solution of 
\begin{equation}\label{eq:allen_cahn_intro}
\partial_t  u = \tfrac{1}{ 2 }\Delta u  + u-u^3
\end{equation}
with $d=100, \eps=0.01, u( 0, x) =( 1 + \max\{ | x_1 |^2, ..., | x_{100} |^2)^{-1} $ requires
$10$ seconds of runtime on a 2.8 GHz Intel i7 processor with 16 GB RAM. 
     
We also introduce the tools needed to analyze these high-dimensional algorithms.
Some of these tools are quite  non-standard (e.g.  the semi-norms~\eqref{eq:seminorms}
and the recursive inequality~\eqref{eq:global.estimate} involving different semi-norms).
Using these tools, we are able to establish rigorously the bounds for the computational complexity 
mentioned above.

\subsection{Notation}
Since the proposed algorithm relies heavily on the Feynman-Kac formula, we will adopt the notations and 
conventions in stochastic analysis. In addition, we frequently use the following notation.
We denote by
$
  \left\| \cdot \right\| \colon
  \left(
    \cup_{ n \in \N }
    \R^n
  \right)
  \to 
  [0,\infty)
$
and
$
  \langle \cdot, \cdot \rangle \colon
  \left(
    \cup_{ n \in \N }
    (
    \R^n \times \R^n
    )
  \right)
  \to
  [0,\infty)
$
the functions that satisfy 
for all $ n \in \N $, $ v = ( v_1, \dots, v_n ) $, $ w = ( w_1, \dots, w_n ) \in \R^n $
that
$
  \left\| v \right\|
  =
  \big[ 
    \sum_{i=1}^n\left| v_i \right|^2
  \big]^{ 1 / 2 }
$
and
$
  \langle
    v, w
  \rangle
    =
   \sum_{i=1}^n v_i w_i
$.
For every
topological space $(E,\mathcal E)$ we denote by $\mathcal{B}(E)$
the Borel-sigma-algebra on  $(E,\mathcal E)$.
For all measurable spaces $(A,\mathcal{A})$ and $(B,\mathcal{B})$
we denote by $\mathcal{M}(\mathcal A,\mathcal B)$ the set of $\mathcal{A}$/$\mathcal{B}$-measurable
functions from $A$ to $B$.
For all metric spaces $(E,d_E)$ and $(F,d_F)$ we denote by
$\Lip(E,F)$ the set of all globally Lipschitz continuous functions from $E$ to $F$.
For every $d\in\N$
we denote by $\R^{d\times d}_{ \operatorname{Inv} }$ the set of invertible matrices in $\R^{d\times d}$.
For every $d\in\N$ and every $A\in\R^{d\times d}$ we denote by $A^{*}\in\R^{d\times d}$ 
the transpose of $A$.
For every $d\in\N$ and every $x=(x_1,\ldots,x_d)\in\R^d$ we denote by $\operatorname{diag}(x)\in\R^{d\times d}$
 the diagonal matrix with diagonal entries $x_1,\ldots,x_d$.
For every $T\in(0,\infty)$ we denote by $\mathcal{Q}_T$ the set given by
$\mathcal{Q}_T=\{w\colon[0,T]\to\R\colon w^{-1}(\R\backslash\{0\})\text{ is a finite set}\}$.
We denote by $\lfloor\cdot\rfloor\colon\R\to\Z$
and $[\cdot]^{+}\colon\R\to[0,\infty)$
 the functions that satisfy for all $x\in\R$ that
$\lfloor x\rfloor =\max(\Z\cap(-\infty,x])$.
and $[x]^+=\max\{x,0\}$.
We denote by $\tfrac{0}{0}$, $0\cdot\infty$, and $0^0$
the real numbers given by
 $\tfrac{0}{0}=0$, $0\cdot\infty=0$, and $0^0=1$.
%
%

\section{Multilevel Picard iteration for semilinear parabolic PDEs}
\label{sec:semilinear}

\subsection{A fixed-point equation for semilinear PDEs}
\label{sec:derivation.semilinear}

Let 
$ T > 0 $, 
$ d \in \N $, 
let 
$ 
  g \colon \R^d \to \R
$, 
$
  f \colon \R \times \R^{d} \to \R  
$,
$
  u \colon [0,T] \times \R^d \to \R
$,
$
  \mu \colon [0,T] \times \R^d \to \R^d
$,
and
$ 
  \sigma = ( \sigma_1,\ldots,\sigma_d )
  \colon [0,T] \times \R^d \to \R^{ d \times d }_{ \operatorname{Inv} } 
$
be sufficiently regular functions, 
 assume 
that
$
  u(T,x) = g(x) 
$
and
\begin{equation}  
\label{eq:PDE_quasilinear_0}
 \partial_t u
  + 
  f( 
    u    , 
    \sigma^{*}\nabla u
  )
  +
  \langle 
    \mu
    ,
    \nabla u  
  \rangle
  +
  \tfrac{ 1 }{ 2 }
  \operatorname{Trace}(
    \sigma \sigma ^*
    \operatorname{Hess} u
  ) 
  = 0
  ,   
\end{equation}
for
$ t \in [0,T) $, $ x \in \R^d $,
let
$
  ( 
    \Omega, \mathcal{F}, \P, ( \mathbb{F}_t )_{ t \in [0,T] } 
  )
$
be a stochastic basis 
(cf., e.g., \cite[Appendix~E]{PrevotRoeckner2007}), let
$
  W = 
$
$
  ( W^{ 1 }, \dots, 
$
$  
  W^{ d } ) 
  \colon 
  [0,T] \times \Omega \to \R^d
$
be a standard $ ( \mathbb{F}_t )_{ t \in [0,T] } $-Brownian motion,
and for every $ s \in [0,T] $, $ x \in \R^d $
let 
$ 
  X^{ s, x }
  \colon [s,T] \times \Omega \to \R^d
$
and
$
  D^{ s, x } 
  \colon [s,T] \times \Omega \to \R^{ d \times d }
$
be $ ( \mathbb{F}_t )_{ t \in [s,T] } $-adapted stochastic processes 
with continuous sample paths which satisfy
that for all $ t \in [s,T] $ 
it holds $ \P $-a.s.\ that
\begin{equation}  
\label{eq:def_XD}
\begin{split} 
  X^{ s, x }_t
&
  = 
  x + \int_s^t \mu(r, X^{ s, x }_r ) \, dr
  + 
  \sum_{ j = 1 }^d
  \int_s^t \sigma_j(r, X^{ s, x }_r ) \, d W^j_r,
\\
  D^{ s, x }_t 
  &
  =
  \operatorname{I}_{\R^{d\times d}} 
  +
  \int_s^t (\tfrac{\partial}{\partial x}\mu)( r,X^{ s, x }_r ) \, D^{ s, x }_r \, dr
  +
  \sum_{ j = 1 }^d
  \int_s^t (\tfrac{\partial}{\partial x}\sigma_j)( r,X^{ s, x }_r ) \, D^{ s, x }_r \, d W^{ j }_r
\end{split}     
\end{equation}
(cf., e.g., \cite[Chapter 5]{ks91}, \cite{gk96b}, or \cite{albeverio1991stochastic} for existence and uniqueness results for stochastic differential
equations of the form \eqref{eq:def_XD}).
For every $s\in[0,T]$ the processes $ D^{ s, x } $, $x\in \R^d$, are in a suitable sense the 
\emph{derivative processes} of $ X^{ s, x } $, $x\in \R^d$, with respect to $ x \in \R^d $.
%
%
%
%
%
%
Using the \emph{Feynman-Kac formula}, we have from  \eqref{eq:PDE_quasilinear_0} 
\begin{equation}\label{eq:fk_solution}
u(s,x)= \E[g(X^{s,x}_T)]+\int_s^T \E\!\left[f\!\left(u(t,X^{s,x}_t),[\sigma(t,X^{s,x}_t)]^{*}(\nabla u) (t,X^{s,x}_t)\right)\right]dt
\end{equation}
for all $(s, x) \in [0,T)\times \R^d$.
In \eqref{eq:fk_solution} the derivative of $u$ appears on the right-hand side and, therefore, \eqref{eq:fk_solution} does not provide a closed fixed point equation.
To obtain such a closed fixed point equation we now bring the \emph{Bismut-Elworthy-Li formula} into play
(see, e.g., Elworthy \& Li~\cite[Theorem~2.1]{elworthy1994formulae} or Da Prato \& Zabczyk~\cite[Theorem~2.1]{DaPratoZabczyk1997}).
This gives us
\begin{equation}
\begin{split}\label{eq:bel_solution}
[\sigma(s,x)]^{*}(\nabla u)(s,x)&= \E\!\left[g(X^{s,x}_T)\tfrac{  [\sigma(s,x)]^{*} }{ T - s } 
      \smallint\nolimits_s^T
      \big[ 
        \sigma( r, X_r^{ s, x } )^{ - 1 } 
        D_r^{ s, x } 
      \big]^{ * }
      d W_r\right]\\
      &\quad
      +\int_s^T \E\!\left[f\!\left(u(t,X^{s,x}_t),[\sigma(t,X^{s,x}_t)]^{*}(\nabla u) (t,X^{s,x}_t)\right)\tfrac{[\sigma(s,x)]^{*} }{ t - s }
      \smallint\nolimits_s^t
      \big[ 
        \sigma( r, X_r^{ s, x } )^{ - 1 } 
        D_r^{ s, x } 
      \big]^{ * } 
      \, d W_r \right]dt,
      \end{split}
\end{equation}
for all $(s, x) \in [0,T)\times \R^d$.
Now let 
$ 
  {\bf u}^{ \infty }
  \in
  \Lip( [0,T] \times \R^d, \R^{ 1 + d } )
$
be defined by 
$
  {\bf u}^{ \infty }( s, x ) 
  = 
  \big( u(s,x), [\sigma(s,x)]^{*}(\nabla u )( s, x ) \big)
$
for all $ (s,x) \in [0,T) \times \R^d $. 
Let 
$ 
  \Phi \colon 
  \Lip( [0,T] \times \R^d, \R^{ 1 + d } )
  \to
  \Lip( [0,T] \times \R^d, \R^{ 1 + d } )
$
be defined by 
\begin{equation}
\label{eq:def_Phi_semilinear}
\begin{split}
  \big( {\bf }\Phi( {\bf v} ) \big)( s, x )
& =
  \E\!\left[ 
      g( X^{ s, x }_T ) 
    \left( 
      1, 
      \tfrac{ [\sigma(s,x)]^{*} }{ T - s } 
      \smallint\nolimits_s^T
      \big[ 
        \sigma( r, X_r^{ s, x } )^{ - 1 } 
        D_r^{ s, x } 
      \big]^{ * }
      d W_r
    \right)
  \right]
\\ 
&
\quad
  +
    \int_s^T 
    \E\!\left[
      f\!\left(
         {\bf v}\!\left(t, X_t^{ s, x } \right)
      \right) 
      \big(
      1 ,
      \tfrac{ [\sigma(s,x)]^{*} }{ t - s }
      \smallint\nolimits_s^t
      \big[ 
        \sigma( r, X_r^{ s, x } )^{ - 1 } 
        D_r^{ s, x } 
      \big]^{ * } 
      \, d W_r 
      \big)
    \right]
  dt.
\end{split}
\end{equation}
for all
$ 
  {\bf v} \in 
  \Lip( [0,T] \times \R^d, \R^{ 1 + d } )
$, $ (s,x) \in [0,T) \times \R^d $. Combining \eqref{eq:def_Phi_semilinear} with 
 \eqref{eq:fk_solution} and \eqref{eq:bel_solution}
gives
\begin{equation}
{\bf u}^{ \infty } = \Phi( {\bf u}^{ \infty } ).
\end{equation}
Next we define a sequence of Picard iterations associated to \eqref{eq:def_Phi_semilinear},
\begin{equation}
{\bf u}_k(s,x) = ( {\bf \Phi}( {\bf u}_{ k - 1 } ))(s,x)
\end{equation}
for all $ k \in \N $, $s\in [0,T)$, $x\in \R^d$.
This sequence of Picard iterations has already been studied in the literature;
see, e.g.\ Thereom 7.3.4 in~\cite{YongZhou1999} or \cite{BenderDenk2007}.
Under suitable assumptions, e.g., Thereom 7.3.4 in~\cite{YongZhou1999}
ensures that for all $s\in [0,T)$, $x\in \R^d$ it holds that
$ \lim_{ k \to \infty } {\bf u}_k(s,x) = {\bf u}^{ \infty }(s,x) $. Observe that for all
$ k \in \N $, $s\in [0,T)$, $x\in \R^d$ it holds that
\begin{equation}
\label{eq:multilevel_fixed_point2}
\begin{split}
  {\bf u}_k (s,x)
  &=
  {\bf u}_1(s,x)
  +
  \sum_{ l = 1 }^{ k - 1 }
  \left[
    {\bf u}_{ l + 1 }(s,x)
    -
    {\bf u}_l(s,x)
  \right]
=
  ({\bf \Phi}( {\bf u}_0 ))(s,x)
  +
  \sum_{ l = 1 }^{ k - 1 }
  \Big[
    ({\bf \Phi}( {\bf u}_l ))(s,x)
    -
   ( {\bf \Phi}( {\bf u}_{ l - 1 } ))(s,x)
  \Big]\\
  &=
  \E\!\left[ 
      g( X^{ s, x }_T )
    \left( 
      1, 
      \tfrac{ [\sigma(s,x)]^{*} }{ T - s } 
      \smallint\nolimits_s^T
      \big[ 
        \sigma( r, X_r^{ s, x } )^{ - 1 } 
        D_r^{ s, x } 
      \big]^{ * }
      d W_r
    \right)
  \right]\\
  &\quad
  +\sum_{l=0}^{k-1}\int_s^T 
    \E\!\left[
    \left(
      f\!\left(
         {\bf u}_{l}\!\left(t, X_t^{ s, x } \right)
      \right) 
      -
      \1_{\N}(l)
      f\!\left(
         {\bf u}_{l-1}\!\left(t, X_t^{ s, x } \right)
      \right) 
      \right)
      \big(
      1 ,
      \tfrac{ [\sigma(s,x)]^{*} }{ t - s }
      \smallint\nolimits_s^t
      \big[ 
        \sigma( r, X_r^{ s, x } )^{ - 1 } 
        D_r^{ s, x } 
      \big]^{ * } 
      \, d W_r 
      \big)
    \right]
  dt
  .
\end{split}
\end{equation}
Next we incorporate a zero expectation term to slightly reduce the variance 
when approximating the expectation involving $g$ by Monte Carlo approximations.
More precisely, for all
$ k \in \N $, $s\in [0,T)$, $x\in \R^d$ it holds that
\begin{equation}
\label{eq:multilevel_fixed_point3}
\begin{split}
  {\bf u}_k (s,x)
  &=
  (g(x),0)+
  \E\!\left[ 
  \left(
      g( X^{ s, x }_T )-g(x)
    \right)
    \left( 
      1, 
      \tfrac{ [\sigma(s,x)]^{*} }{ T - s } 
      \smallint\nolimits_s^T
      \big[ 
        \sigma( r, X_r^{ s, x } )^{ - 1 } 
        D_r^{ s, x } 
      \big]^{ * }
      d W_r
    \right)
  \right]\\
  &\quad
  +\sum_{l=0}^{k-1}\int_s^T 
    \E\!\left[
    \left(
      f\!\left(
         {\bf u}_{l}\!\left(t, X_t^{ s, x } \right)
      \right) 
      -
      \1_{\N}(l)
      f\!\left(
         {\bf u}_{l-1}\!\left(t, X_t^{ s, x } \right)
      \right) 
      \right)
      \big(
      1 ,
      \tfrac{ [\sigma(s,x)]^{*} }{ t - s }
      \smallint\nolimits_s^t
      \big[ 
        \sigma( r, X_r^{ s, x } )^{ - 1 } 
        D_r^{ s, x } 
      \big]^{ * } 
      \, d W_r 
      \big)
    \right]
  dt
  .
\end{split}
\end{equation}
In this telescope expansion, we will apply a fundamental idea of 
Heinrich~\cite{h98,heinrich01}
and Giles~\cite{g08a} 
(control variates were also used, e.g., in~\cite{k05,GobetLabart2010})
and 
approximate
the continuous quantities (expectation and time integral) by discrete ones (Monte Carlo averages and quadrature formulas respectively)
with different degrees of accuracy at different levels of the Picard iteration.
Since for large $l\in \N$ the difference between $ {\bf u}_{l}$ and ${\bf u}_{l-1}$ is small, say $\rho^{-l}$, it suffices to approximate the expectation and the time integral with lower accuracy, say $\rho^{-(k-l)}$, at level $l\in \{0,\ldots,k-1\}$ for the $k$-th approximation.
More precisely,
we denote by
$ 
  ( q^{ k,\rho }_s )_{ k \in \N_0, \rho \in (0,\infty), s \in [0,T) } 
  \subseteq \mathcal{Q}_T 
$
a family of quadrature formulas on $C([0,T],\R)$ that we employ to 
approximate the time integrals $ \int_s^T \dots dt $, $s\in[0,T]$, appearing on the right-hand
side of \eqref{eq:multilevel_fixed_point3}.
We denote by 
$
  \Theta = \cup_{ n \in \N } \R^n
$ a set that allows to index families of independent random variables which we 
need for the Monte Carlo approximations. We denote by 
$
  ( \mathfrak{m}_{ k, \rho } )_{ k\in \N_0, \rho \in (0,\infty) }$
  and
  $
  ( m_{ k, \rho } )_{ k \in \N_0, \rho \in (0,\infty) } \subseteq \N
$
families of natural numbers that specify the number of Monte Carlo samples for 
approximating the expectations involving $g$ and $f$ on the right-hand side of \eqref{eq:multilevel_fixed_point3}.
In Section~\ref{sec:setting.full.discretization} 
we will take
$\mathfrak{m}_{k,\rho}=m_{k,\rho}=\rho^k$
for every $k\in\N_0$, $\rho\in(0,\infty)$ 
and we take $q^{k,\rho}$ as the Gau\ss-Legendre quadrature rule with
$\lfloor \rho \rfloor$ 
nodes.
Furthermore, for every $k\in\N_0$, $\rho\in(0,\infty)$, $\theta\in\Theta$, $(s,x)\in[0,T]\times\R^d$ we denote by $
  ( \mathcal{X}_{ k, \rho}^{ \theta }( s,x, t ))_{ t \in [s,T] }
$
and 
$
  ( \mathcal{I}_{ k, \rho}^{\theta }( s,x, t ) )_{ t \in (s,T] }
$
the stochastic processes that 
we employ to approximate the processes
$ 
  ( X^{ s, x }_t )_{ t \in [s,T] }
$
and 
$ 
   \big(
      1 ,
      \tfrac{ [\sigma(s,x)]^{*} }{ t - s }
      \smallint\nolimits_s^t
      \big[ 
        \sigma( r, X_r^{ s, x } )^{ - 1 } 
        D_r^{ s, x } 
      \big]^{ * } 
      \, d W_r 
      \big) _{ t \in (s,T] }
$.
More specifically, we choose for every $k\in\N_0$, $\rho\in(0,\infty)$, $\theta\in\Theta$, $(s,x)\in[0,T]\times\R^d$
the processes $
  ( \mathcal{X}_{ k, \rho}^{ \theta }( s,x, t ))_{ t \in [s,T] }
$
and 
$
  ( \mathcal{I}_{ k, \rho}^{\theta }( s,x, t ) )_{ t \in (s,T] }
$ such that for all $t\in (s,T]$,
\begin{equation}
\begin{split}
\mathcal{X}_{ k, \rho}^{ \theta }( s,x, t ) &\approx X^{s,x}_t, \\
\mathcal{I}_{ k, \rho}^{\theta }( s,x, t ) &\approx \big(
      1 ,
      \tfrac{ [\sigma(s,x)]^{*} }{ t - s }
      \smallint\nolimits_s^t
      \big[ 
        \sigma( r, X_r^{ s, x } )^{ - 1 } 
        D_r^{ s, x } 
      \big]^{ * } 
      \, d W_r 
      \big).
\end{split}
\end{equation}

\subsection{The approximation scheme}
\label{sec:algorithm_semilinear}

Let 
$ T \in (0,\infty) $, 
$ d \in \N $, 
$
  \Theta = \cup_{ n \in \N } \R^n
$,
let 
$ 
  g \colon \R^d \to \R
$, 
$
  f \colon \R^{d+1} \to \R  
$,
$
  \mu \colon [0,T] \times \R^d \to \R^d
$,
$ 
  \sigma
  \colon [0,T] \times \R^d \to \R^{ d \times d }_{ \operatorname{Inv} } 
$
be measurable functions,
let 
$ 
  ( q^{ k, \rho }_s )_{ k\in \N_0, \rho \in (0,\infty), s \in [0,T) } 
  \subseteq \mathcal{Q}_T 
$,
$
  ( \mathfrak{m}_{ k, \rho } )_{ k\in \N_0, \rho \in (0,\infty) } ,$
  $
  ( m_{ k,  \rho } )_{ k \in \N_0, \rho \in (0,\infty) } \subseteq \N
$,
let
$
  ( 
    \Omega, \mathcal{F}, \P, ( \mathbb{F}_t )_{ t \in [0,T] } 
  )
$
be a stochastic basis,
let 
$
  W^{ \theta } \colon [0,T] \times \Omega \to \R^d 
$,
$ \theta \in \Theta $,
be independent
standard $ ( \mathbb{F}_t )_{ t \in [0,T] } $-Brownian motions
with continuous sample paths,
for every 
$ l \in \Z $,
$ \rho \in (0,\infty) $,
$ \theta \in \Theta $,
$ x \in \R^d $,
$ s \in [0,T) $, $ t \in [s,T] $
let 
$
  \mathcal{X}_{ l, \rho}^{ \theta }( s,x, t )
  \colon  \Omega \to \R^d
$ and
$
  \mathcal{I}_{ l, \rho}^{ \theta }( s,x,  t)
  \colon  \Omega \to \R^{ 1+d  }
$
be functions,
and 
for every
$ \theta \in \Theta $,
$
  \rho \in (0,\infty)
$
let
$ 
  {\bf U}^{ \theta }_{ k, \rho } 
  \colon [0,T]\times\R^d \times \Omega \to \R^{ d + 1 } 
$,
$
  k \in \N_0 
$,
be 
functions
that satisfy for all $k\in\N$,
$ (s,x) \in [0,T)\times\R^d $
that
\begin{equation}\label{eq:def_scheme} 
\begin{split}
  {\bf U}^{ \theta }_{ k, \rho }( s, x )
&= 
  (g(x),0)+
  \sum_{ i = 1 }^{ \mathfrak{m}_{ k, \rho } }
  \tfrac{ 1 }{
    \mathfrak{m}_{ k, \rho } 
  }
  \,
  \big[
    g(
      \mathcal{X}_{ k, \rho }^{ (\theta, 0, -i) }(s, x, T)
    )
    -
    g(x)
  \big]
  \,
  \mathcal{I}_{ k, \rho}^{ ( \theta, 0, - i ) }( s, x, T)
\\
  &\quad +
  \sum_{ l = 0 }^{ k - 1 }
  \sum_{ i = 1 }^{
    m_{ k- l, \rho } 
  }
  \sum_{ t \in [s,T] }
  \tfrac{ 
    q^{ k- l, \rho }_s( t )
  }{
    m_{ k - l, \rho } 
  }
  \,
  \Big[
    f\Big(
      {\bf U}^{ ( \theta, l, i , t) }_{ l, \rho }\big( 
        t,
          \mathcal{X}_{ 
            k - l , \rho}^{ 
            ( \theta, l, i ) 
          }
         ( s, x, t )
      \big)
    \Big)
\\ 
& \quad
   -
   \1_{ \N }( l )
   \,
    f\Big(
      {\bf U}^{ ( \theta, -l, i, t ) }_{ [ l - 1 ]^{+} , \rho }\big( 
        t,
          \mathcal{X}_{ 
            k - l , \rho}^{
            ( \theta, l, i )} ( s, x, t )
      \big)
    \Big)
  \Big]
  \,
  \mathcal{I}_{ k - l, \rho}^{ (\theta, l, i) }( s, x, t )
  .
  \end{split}
  \end{equation}
  
Observe that the approximation scheme~\eqref{eq:def_scheme} 
employs Picard fixed-point iteration (cf., e.g., \cite{BenderDenk2007}),
multilevel/multigrid techniques (see, e.g., \cite{h98,heinrich01,g08b,cdmr09}),
discretizations of the SDE system~\eqref{eq:def_XD},
as well as quadrature approximations for the time integrals.
The numerical approximations~\eqref{eq:def_scheme} are 
full history recursive in the sense that for every $(k,\rho)\in\N\times(0,\infty)$ 
the full history 
$ {\bf U}^{ ( \cdot ) }_{ 0, \rho } $,
$ {\bf U}^{ ( \cdot ) }_{ 1, \rho } $,
$ \dots $,
$ {\bf U}^{ ( \cdot ) }_{ k - 1, \rho } $
needs to be computed recursively 
in order to compute 
$
  {\bf U}^{ ( \cdot ) }_{ k, \rho }
$.
In this sense the numerical approximations~\eqref{eq:def_scheme}
are full history recursive multilevel Picard approximations.  
Finally we remark that all multilevel Picard approximations on the right-hand side of~\eqref{eq:def_scheme} 
are independent since all Brownian motions $W^{\theta}$, $\theta\in\Theta$, are independent.
This independence is useful for the mathematical analysis and allows
an implementation with a simple recursive structure (cf.\ Subsection~\ref{sec:code}).

\subsection{Numerical simulations of high-dimensional semilinear PDEs}
\label{sec:numerics}

We applied the algorithm \eqref{eq:def_scheme} to 
approximate the solutions at single space-time points of several 
semilinear PDEs from physics and 
financial mathematics such as 
\begin{enumerate}[(i)]
\item \label{it:counterparty} a PDE arising from the recursive pricing model with default risk due to Duffie, Schroder, \& Skiadas~\cite{DuffieSchroderSkiadas1996},
\item \label{it:cva} a PDE arising from the valuation of derivative contracts with counterparty credit risk
(see, e.g., Burgard \& Kjaer~\cite{BurgardKjaer2011} and Henry-Labord\`ere~\cite{Henry-Labordere2012} for derivations of the PDE),
\item \label{it:borrowlend} a PDE arising from pricing models for financial markets with different interest rates for 
 borrowing and lending due to Bergman~\cite{Bergman1995},
\item \label{it:allencahn} a version of the Allen-Cahn equation with a double well potential, and
\item \label{it:explicit} a PDE  with an explicit solution whose three-dimensional version has been considered in Chassagneux~\cite{Chassagneux2014}.
\end{enumerate}
We took $d=100$.
All simulations are performed on a computer with a 2.8 GHz Intel i7 processor and 16 GB RAM. 
We refer to \cite{EHutzenthalerJentzenKruse2017} for 
 the simulation results, {\sc Matlab} codes and further details concerning the numerical simulations.
These results suggest that the proposed algorithm is highly efficient and quite practical for
dealing with these high-dimensional PDEs.

\section{Convergence rate for the multilevel Picard iteration}
\label{sec:convergence_rate}
In this section we establish the convergence rate 
for semilinear heat equations in the case where
the nonlinearity is independent of the gradient of the solution and
satisfies the Lipschitz-type condition~\eqref{eq:fLipschitz} below 
and when the Gau\ss-Le\-gen\-dre formula~\eqref{eq:def_gauss_leg}
(see, e.g., \cite{davis2007methods} for more details)
is used as the quadrature rule.

%
%
%
\subsection{Setting}\label{sec:setting.full.discretization}

Let 
$ T,L \in (0,\infty) $, 
$ d \in \N $, 
$ 
  g \in C^2(\R^d,\R)
$,
$
  \Theta = \cup_{ n \in \N } \R^n
$,
let
$
  ( 
    \Omega, \mathcal{F}, \P, ( \mathbb{F}_t )_{ t \in [0,T] } 
  )
$
be a stochastic basis,
let
$
  W^{ \theta } \colon [0,T] \times \Omega \to \R^d 
$,
$ \theta \in \Theta $,
be independent standard $(\mathbb{F}_t)_{t\in[0,T]}$-Brownian motions
with continuous sample paths,
let 
$
  f\colon[0,T]\times\R^d\times\R\to\R
$
  be a Borel measurable function which satisfies for all $t\in[0,T]$, $x\in\R^d$, $u_1,u_2\in\R$ that
  \begin{equation}  \begin{split}\label{eq:fLipschitz}
    |f(t,x,u_1)-f(t,x,u_2)|\leq L|u_1-u_2|,
  \end{split}     \end{equation}
let
$
  \funcF \colon 
  \mathcal{M}(\mathcal{B}([0,T]\times\R^d),\mathcal{B}(\R))
  \to
  \mathcal{M}(\mathcal{B}([0,T]\times\R^d),\mathcal{B}(\R))
$ be the function which satisfies for all $t\in[0,T]$, $x\in\R^d$, $u\in \mathcal{M}(\mathcal{B}([0,T]\times\R^d),\mathcal{B}(\R))$
  that
$
  (F(u))(t,x)=f(t,x,u(t,x))
$,
let $u^{\infty}=(u^{\infty}(r,y))_{(r,y)\in[0,T]\times\R^d}\in C^{1,2}([0,T]\times\R^d,\R)$ satisfy
for all $r\in[0,T]$, $y\in \R^d$ that
$u^{\infty}(T,y)=g(y)$
and
\begin{equation}  \begin{split}\label{eq:PDE}
  \partial_r u^{\infty}(r,y)
+\frac{1}{2}(\Delta_y u^{\infty})(r,y)
 +(\funcF(u^{\infty}))(r,y)=0,
\end{split}     \end{equation}
for every $n\in\N$ let
$(c_i^{n})_{i\in\{1,\ldots,n\}}\subseteq[-1,1]$
be the $n$ distinct roots of the Legendre polynomial $[-1,1]\ni x\mapsto \tfrac{1}{2^nn!}\tfrac{d^n}{dx^n}[(x^2-1)^n]\in\R$,
for every $n\in\N$, $a \in \R$, $b\in [a,\infty)$
let 
$q^{n,[a,b]}\colon[a,b]\to\R$ be the function which satisfies for all $t\in[a,b]$ that
\begin{equation}  \begin{split}\label{eq:def_gauss_leg}
  q^{n,[a,b]}(t)=\begin{cases}
 \int_a^b \left[\prod_{\substack{i\in\{1,\ldots,n\},\\ c_i^n\neq \frac{2t-(a+b)}{b-a}}}
  \tfrac{2x-(b-a)c_i^n-(a+b)}{2t-(b-a)c_i^n-(a+b)}\right]\,dx
  &\colon (a<b) \text{ and }\big(\frac{2t-(a+b)}{b-a}\in\{c_1^n,\ldots,c_n^n\}\big)\\
  0&\colon\text{else,}
  \end{cases}
\end{split}     \end{equation}
let $(\bar{q}^{n,Q})_{n,Q\in\N_0}\subseteq \mathcal{Q}_T$ satisfy for all $n,Q\in\N$, $t\in[0,T]$
that $\bar{q}^{0,Q}(t)=\1_{\{0\}}(t)$
and
\begin{equation}\label{eq:def.bar.q}
\bar{q}^{n,Q}(t)=
  \sum_{s\in[0,t]}
  \bar{q}^{n-1,Q}(s)
  \,q^{Q,[s,T]}(t),
\end{equation}
let
$ 
  ({ U}_{ n,M,Q}^{\theta })_{n,M,Q\in\Z,\theta\in\Theta}
  \subseteq\mathcal{M}(\mathcal{B}([0,T]\times\R^d)\otimes\mathcal{F},\mathcal{B}(\R))
$
satisfy
for all 
$
  n,M,Q \in \N
$,
$ \theta \in \Theta $,
$ (t,x) \in [0,T]\times \R^d $
that $
U_{0,M,Q}^{\theta}(t,x)=0$ and
\begin{multline}\label{eq:def:U}
  U_{n,M,Q}^{\theta}(t,x)=
  \frac{1}{M^n}\sum_{i=1}^{M^n}g(x+W^{(\theta,0,-i)}_T-W^{(\theta,0,-i)}_t)
  \\
  +\sum_{l=0}^{n-1}\sum_{i=1}^{M^{n-l}}\sum_{s\in[t,T]}\frac{q^{Q,[t,T]}(s)}{M^{n-l}}
  \big(\funcF(U_{l,M,Q}^{(\theta,l,i,s)})-\1_{\N}(l)\funcF( U_{l-1,M,Q}^{(\theta,-l,i,s)})\big)
  (s,x+W_{s}^{(\theta,l,i)}-W_t^{(\theta,l,i)}),
 \end{multline}
for every $n,Q\in\N_0$ let
$\left\|\cdot\right\|_{n,Q}\colon \mathcal{M}(\mathcal{B}([0,T]\times\R^d)\otimes\mathcal{F},\mathcal{B}(\R))\to[0,\infty]$
be the function which satisfies
\begin{equation}  \begin{split}\label{eq:seminorms}
  &\left\|V\right\|_{n,Q}=
  \sum_{t\in[0,T]}\bar{q}^{n,Q}(t)
  \left[\sup_{s\in[t,T]}\sup_{u\in[0,s]}\sup_{z\in\R^d}\sqrt{\E\big[|V(s,z+W_u^0)|^2\big]}\right]
\end{split}     \end{equation}
for all
$V\in \mathcal{M}(\mathcal{B}([0,T]\times \R^d)\otimes\mathcal{F},\mathcal{B}(\R))$.

\subsection{Pseudocode}\label{sec:code}

In this subsection a mathematical style pseudocode illustrates that the multilevel Picard approximations~\eqref{eq:def:U}
can be easily implemented.
We assume that
the time horizon $T\in(0,\infty)$,
the dimension $d\in\N$,
the terminal condition $g\colon \R^d \to \R$,
the (gradient-independent) nonlinearity $f\colon [0,T]\times \R^d \times \R \to \R$,
the basis for the number of Monte-Carlo samples $M\in \N$,
the number of quadrature nodes $Q \in \N$,
increasingly ordered roots $c\in [-1,1]^Q$ of the $Q$-th Legendre polynomial,
and the corresponding Legendre quadrature weights $w\in [0,\infty)^Q$
are global variables.
For an implementation in {\sc Matlab} see~\cite{EHutzenthalerJentzenKruse2017}.
\begin{algorithm}[H]
\caption{Multilevel Picard approximation}
\begin{algorithmic}[1]
\Function{MLP}{$n,t,x}$
	\State $c_{loc}\leftarrow (T-t)c/2 +(T+t)/2$;
	\Comment Quadrature nodes on $[t,T]$
	\State $d\leftarrow c_{loc}-[t;c_{loc}(1:(Q-1))]$;
	\Comment Increments between consecutive quadrature nodes
	\State $w_{loc}\leftarrow (T-t)w/2$;
	\Comment Quadrature weights on $[t,T]$
	\State \multiline{Generate $M^n$ realizations $W(i)\in \R^d$, $i\in \{1,\ldots, M^n\}$, of independent standard normally distributed random vectors;}
	\State $u\leftarrow \frac{1}{M^n }\sum_{i=1}^{M^n}g(x+\sqrt{T-t}W(i))$;
	\For{$l\leftarrow 0$ to $(n-1)$}
		\State $X(i)\leftarrow x$ for all $i\in \{1,\ldots, M^{n-l}\}$;
		\For{$k\leftarrow 1$ to $Q$}
			\State \multiline{%
            Generate $M^{n-l}$ realizations $W(i)\in \R^d$, $i\in \{1,\ldots, M^{n-l}\}$, of independent standard normally distributed random vectors;}
			\State $X(i)\leftarrow X(i)+\sqrt{d(k)}W(i)$ for all $i\in \{1,\ldots, M^{n-l}\}$;
			\State $
			u\leftarrow u+\frac{w_{loc}(k)}{M^{n-l}}\sum_{i=1}^{M^{n-l}}
				f(c_{loc}(k), X(i), \text{MLP}(l,c_{loc}(k),X(i)))$;
			\If{$l>0$}
				\State $u\leftarrow u-\frac{w_{loc}(k)}{M^{n-l}}\sum_{i=1}^{M^{n-l}}
				f(c_{loc}(k), X(i), \text{MLP}(l-1,c_{loc}(k),X(i)))$;
			\EndIf
		\EndFor
	\EndFor
	\State \Return $u$;
\EndFunction
\end{algorithmic}
\end{algorithm}

\subsection{Sketch of the proof}

Throughout this subsection assume the setting in Subsection~\ref{sec:setting.full.discretization} and let $N,M,Q \in \N$. Theorem \ref{thm:rate} provides an upper bound for the distance between the approximation $U^0_{N,M,Q}$ and the PDE solution $u^\infty$ measured in the semi-norms $\|\cdot \|_{n,Q}$, $n \in \N_0$, given in \eqref{eq:seminorms}. We establish this bound by splitting the global error
$\|U_{N,M,Q}^{0}-u^{\infty}\|_{n,Q}$ 
 into the Monte Carlo error $\|U_{N,M,Q}^{0}-\E[U_{N,M,Q}^{0}]\|_{n,Q}$ and the time discretization error $\|\E[U_{N,M,Q}^{0}]-u^{\infty}\|_{n,Q}$. To analyze the time discretization error, we employ the Feynman-Kac formula to obtain 
 \begin{equation}  \begin{split}\label{eq:FeynmanKacSketch}
      u^{\infty}(s,x)=\E\!\left[g(x+W_{T-s}^0)
      +
      \int_s^{T}(\funcF(u^{\infty}))(t,x+W_{t-s}^0)\,dt
      \right]
    \end{split}     \end{equation}
    for all $s \in [0,T], x\in \R^d$
(see Lemma \ref{l:nonlinear.FK.formula} below). Moreover, the approximations admit the following Feynman-Kac-type representation
\begin{equation}  \begin{split}\label{eq:discreteFeynmanKacSketch}
  \E\!\left[U_{N,M,Q}^{0}(s,x)
    \right]
    =
    \E\!\left[g(x+W_{T-s}^0)+\sum_{t\in[s,T]}q^{Q,[s,T]}(t)
    \big(\funcF( U_{N-1,M,Q}^{0})\big)(t,x+W_{t-s}^0)
    \right]
\end{split}     \end{equation}
 for all $s \in [0,T], x\in \R^d$
(see Lemma \ref{l:approximations.integrable} below). This, \eqref{eq:FeynmanKacSketch} and the Lipschitz-type assumption \eqref{eq:fLipschitz} show that the time discretization error is bounded from above by the error of the $(N-1)$-th approximation
 $\|
     U_{N-1,M,Q}^{0}- u^{\infty}
    \|_{n+1,Q}
    $
     and the error of the Gau\ss-Legendre quadrature rule applied to the function $[s,T]\ni t \mapsto \E[\funcF(u^\infty)(t,x+W_{t-s})] \in \R$ (see \eqref{eq:final.Timeerror} below). 
     Combining this with 
the established bound for the Monte Carlo error (see \eqref{eq:MC.error} below) results in the recursive inequality for the global error \eqref{eq:global.estimate} that can be handled using a discrete Gronwall-type inequality. The error representation for Gau\ss-Legendre quadrature rules allows to further simplify the global error under suitable regularity assumptions (see Corollary \ref{c:rate3} below). In Section \ref{sec:comp_cost} we provide upper bounds for the number of realizations of scalar standard normal random variables and for the number of function evaluations of $f$ and $g$ required to compute one realization of $U^0_{N,M,Q}(t,x)$ for a single point $(t,x)\in [0,T]\times \R^d$ in space-time.
This and Corollary \ref{c:rate3} prove
in the case of the semilinear heat equation~\eqref{eq:PDE}
that the computational complexity 
(see Corollary \ref{c:rate3} for the precise definition hereof) of our proposed scheme grows linearly in the space
 dimension $d$ and polynomially in the inverse accuracy $\eps^{-1}$ under suitable assumptions (see Corollary \ref{c:rate4a} below).

\subsection{Preliminary results for the Gau\ss-Legendre quadrature rules}
\begin{lemma}[Gau\ss-Legendre over different intervals]\label{l:scaling.LG}
 Assume the setting in Subsection~\ref{sec:setting.full.discretization},
  let $n\in\N$, $s\in[0,T)$, $t\in[0,s]$,
  and let $\psi\colon[0,T]\to[0,\infty]$ be  a non-increasing function.
  Then we have
  \begin{equation}  \begin{split}
    \sum_{r\in[s,T]}q^{n,[s,T]}(r)\,\psi(r)
    \leq
    \sum_{r\in[t,T]}q^{n,[t,T]}(r)\,\psi(r).
  \end{split}     \end{equation}
\end{lemma}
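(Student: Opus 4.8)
The plan is to read off from the explicit formula~\eqref{eq:def_gauss_leg} how the Gau\ss-Legendre nodes and weights on an interval $[a,T]$ depend on the left endpoint $a$, and then to observe that lowering $a$ simultaneously moves every node to the left and increases every weight; since $\psi$ is non-negative and non-increasing, both effects can only increase the quadrature sum, which is exactly the asserted inequality.

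First I would put $q^{n,[a,b]}$ (for $a<b$) into standard Lagrange form. Writing $r_i^{[a,b]}:=\tfrac{a+b}{2}+\tfrac{b-a}{2}\,c_i^{n}$ for $i\in\{1,\dots,n\}$, the points $r_1^{[a,b]},\dots,r_n^{[a,b]}$ are pairwise distinct, $q^{n,[a,b]}$ vanishes off this set, and the change of variables $x=\tfrac{a+b}{2}+\tfrac{b-a}{2}\,y$ in~\eqref{eq:def_gauss_leg} yields $q^{n,[a,b]}\big(r_i^{[a,b]}\big)=\tfrac{b-a}{2}\,w_i^{n}$, where $w_i^{n}:=\int_{-1}^{1}\prod_{j\neq i}\tfrac{y-c_j^{n}}{c_i^{n}-c_j^{n}}\,dy$ is the $i$-th Gau\ss-Legendre weight on $[-1,1]$. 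Hence, for every interval $[a,b]\subseteq[0,T]$ with $a<b$,
\begin{equation*}
\sum_{r\in[a,b]}q^{n,[a,b]}(r)\,\psi(r)=\tfrac{b-a}{2}\sum_{i=1}^{n} w_i^{n}\,\psi\big(r_i^{[a,b]}\big),
\end{equation*}
using the convention $0\cdot\infty=0$ to discard the values of $\psi$ off the nodes. I would also invoke the classical positivity of the Gau\ss-Legendre weights, so $w_i^{n}\ge 0$ for all $i$.

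Now fix $b=T$ and let $t\le s<T$. For each $i$ the map $[0,T]\ni a\mapsto r_i^{[a,T]}=\tfrac{a\,(1-c_i^{n})+T\,(1+c_i^{n})}{2}$ is non-decreasing since $c_i^{n}\in[-1,1]$, hence $r_i^{[t,T]}\le r_i^{[s,T]}$, and the monotonicity of $\psi$ gives $0\le\psi\big(r_i^{[s,T]}\big)\le\psi\big(r_i^{[t,T]}\big)$; moreover $0\le\tfrac{T-s}{2}\,w_i^{n}\le\tfrac{T-t}{2}\,w_i^{n}$. Multiplying these two termwise inequalities between non-negative quantities and summing over $i\in\{1,\dots,n\}$, the identity above (applied once to $[s,T]$ and once to $[t,T]$, both of which lie in $[0,T]$ and are nondegenerate) gives
\begin{equation*}
\sum_{r\in[s,T]}q^{n,[s,T]}(r)\,\psi(r)=\tfrac{T-s}{2}\sum_{i=1}^{n} w_i^{n}\,\psi\big(r_i^{[s,T]}\big)\le\tfrac{T-t}{2}\sum_{i=1}^{n} w_i^{n}\,\psi\big(r_i^{[t,T]}\big)=\sum_{r\in[t,T]}q^{n,[t,T]}(r)\,\psi(r),
\end{equation*}
which is the claim; the possibility that $\psi$ attains the value $+\infty$ and the degenerate case $s=t$ are harmless (with the convention $0\cdot\infty=0$ the displayed chain of inequalities still makes sense in $[0,\infty]$). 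There is no genuine analytic obstacle here: the only points needing care are the bookkeeping that identifies~\eqref{eq:def_gauss_leg} with the scaled Lagrange weights (distinctness of the nodes and that $q^{n,[a,b]}$ is supported exactly on them) and the invocation of non-negativity of the Gau\ss-Legendre weights.
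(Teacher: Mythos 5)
Your proof is correct and follows essentially the same route as the paper: an affine change of variables in \eqref{eq:def_gauss_leg} to identify the scaled weights, then termwise comparison using that lowering the left endpoint shifts each node left (so $\psi$ increases there) and enlarges each weight by the factor $\tfrac{T-t}{T-s}\ge 1$. The only cosmetic differences are that you normalize both rules to the reference interval $[-1,1]$ instead of mapping $[s,T]$ directly onto $[t,T]$, and that you state explicitly the non-negativity of the Gau\ss-Legendre weights, which the paper uses implicitly.
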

\begin{proof}
Note that \eqref{eq:def_gauss_leg} and the integral transformation theorem with the substitution 
$[s,T]\ni x\mapsto (x-s)\tfrac{T-t}{T-s}+t\in[t,T]$ show that
 \begin{equation}  \begin{split}\label{eq:subs_q}
    &\sum_{r\in[s,T]}q^{n,[s,T]}(r)\,\psi(r)
    =\sum_{i=1}^{n}
    q^{n,[s,T]}(\tfrac{T-s}{2}c_i^n+\tfrac{T+s}{2})\,\psi(\tfrac{T-s}{2}c_i^n+\tfrac{T+s}{2})
    \\&
    =
    \sum_{i=1}^{n}
  \Bigg[\int_s^T\Bigg(\prod_{j\in\{1,\ldots,n\}\setminus\{i\}}
  \tfrac{2(x-s)-(T-s)c_j^n-(T-s)}{(T-s)c_i^n+(T+s)-(T-s)c_j^n-(T+s)}\Bigg)\,dx\Bigg]
    \psi(\tfrac{T-s}{2}c_i^n+\tfrac{T+s}{2})
    \\&
   =
    \sum_{i=1}^{n}
   \tfrac{T-s}{T-t}
 \Bigg[ \int_t^T\Bigg(\prod_{j\in\{1,\ldots,n\}\setminus\{i\}}
  \tfrac{2\frac{y-t}{T-t}-c_j^n-1}{c_i^n-c_j^n}\Bigg)\,dy\Bigg]
    \psi(\tfrac{T-s}{2}c_i^n+\tfrac{T+s}{2})
    \\&
   =
    \sum_{i=1}^{n}
   \tfrac{T-s}{T-t}
  \Bigg[\int_t^T\Bigg(\prod_{j\in\{1,\ldots,n\}\setminus\{i\}}
  \tfrac{2(y-t)-(T-t)c_j^n-(T-t)}{(T-t)c_i^n+(T+t)-(T-t)c_j^n-(T+t)}\Bigg)\,dy\Bigg]
    \psi(\tfrac{T-s}{2}c_i^n+\tfrac{T+s}{2})
    \\&
    =
    \sum_{i=1}^{n}
  \tfrac{T-s}{T-t}\,
    q^{n,[t,T]}(\tfrac{T-t}{2}c_i^n+\tfrac{T+t}{2})\,
    \psi(\tfrac{T-s}{2}c_i^n+\tfrac{T+s}{2}).
  \end{split}     \end{equation}
Observe that the fact that $t\le s$ and the fact that
$\forall\,  i \in\{1,\ldots,n\}\colon c_i^{n}\in[-1,1]$
ensure that for all $i\in\{1,\ldots,n\}$ 
it holds that $\tfrac{T-s}{2}c_i^n+\tfrac{T+s}{2}\geq 
   \tfrac{T-t}{2}c_i^n+\tfrac{T+t}{2}$. This and the fact
   that $\psi$ is non-increasing imply for all $i\in\{1,\ldots,n\}$ 
   that $\psi(\tfrac{T-s}{2}c_i^n+\tfrac{T+s}{2})\leq 
   \psi(\tfrac{T-t}{2}c_i^n+\tfrac{T+t}{2})$.
Combining this with \eqref{eq:subs_q}, \eqref{eq:def_gauss_leg}, and the fact that $\tfrac{T-s}{T-t}\leq 1$ proves that
  \begin{equation}  \begin{split}
    &\sum_{r\in[s,T]}q^{n,[s,T]}(r)\,\psi(r)
    \leq
    \sum_{i=1}^{n}
    q^{n,[t,T]}(\tfrac{T-t}{2}c_i^n+\tfrac{T+t}{2})\,\psi(\tfrac{T-t}{2}c_i^n+\tfrac{T+t}{2})
    =
    \sum_{r\in[t,T]}q^{n,[t,T]}(r)\,\psi(r).
  \end{split}     \end{equation}
\end{proof}
\begin{lemma}\label{l:quad.rule.rn}
 Assume the setting in Subsection~\ref{sec:setting.full.discretization}
  and
  let $Q\in\N$.
  Then, for all $n\in\N_0$, $k\in \N_0\cap [0,2Q-n)$ we have
  \begin{equation}  \begin{split}\label{eq:quad.rule.n}
    \sum_{t\in[0,T]}\bar{q}^{n,Q}(t)\,\tfrac{(T-t)^k}{k!}=\tfrac{T^{n+k}}{(n+k)!}.
  \end{split}     \end{equation}
\end{lemma}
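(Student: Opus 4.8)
The plan is to prove \eqref{eq:quad.rule.n} by induction on $n$, using two ingredients. The first is the exactness of the Gau\ss-Legendre rule: for every $s\in[0,T]$ and every polynomial $P\colon\R\to\R$ of degree at most $2Q-1$ one has $\sum_{r\in[s,T]}q^{Q,[s,T]}(r)\,P(r)=\int_s^T P(r)\,dr$. To obtain this from the definition \eqref{eq:def_gauss_leg} I would substitute $x=\tfrac{T-s}{2}\,\xi+\tfrac{T+s}{2}$ in the integral appearing there, which identifies $q^{Q,[s,T]}\big(\tfrac{T-s}{2}c_i^Q+\tfrac{T+s}{2}\big)$ with $\tfrac{T-s}{2}\int_{-1}^1\ell_i(\xi)\,d\xi$, where $\ell_i$ is the $i$-th Lagrange basis polynomial associated with the nodes $c_1^Q,\dots,c_Q^Q$, and then run the classical argument: decompose the transformed integrand into a multiple of the degree-$Q$ Legendre polynomial plus a remainder of degree at most $Q-1$, use the $L^2([-1,1])$-orthogonality of the degree-$Q$ Legendre polynomial to all polynomials of lower degree, and reconstruct the remainder from its values at the $Q$ nodes via Lagrange interpolation. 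The second ingredient is the elementary identity $\int_s^T\tfrac{(T-r)^j}{j!}\,dr=\tfrac{(T-s)^{j+1}}{(j+1)!}$, valid for all $j\in\N_0$ and all $s\in[0,T]$.

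The base case $n=0$ is immediate: since $\bar q^{0,Q}=\1_{\{0\}}$, the left-hand side of \eqref{eq:quad.rule.n} equals $\tfrac{(T-0)^k}{k!}=\tfrac{T^{0+k}}{(0+k)!}$ for every $k\in\N_0\cap[0,2Q)$. For the inductive step fix $n\in\N$, assume \eqref{eq:quad.rule.n} holds with $n-1$ in place of $n$ for every admissible exponent, and let $k\in\N_0\cap[0,2Q-n)$. Using the recursion \eqref{eq:def.bar.q} and interchanging the two sums (both range over finite subsets of $[0,T]$, since $\bar q^{n-1,Q},\bar q^{n,Q}\in\mathcal{Q}_T$), I would obtain
\begin{equation*}
\sum_{t\in[0,T]}\bar q^{n,Q}(t)\,\tfrac{(T-t)^k}{k!}
=\sum_{s\in[0,T]}\bar q^{n-1,Q}(s)\sum_{t\in[s,T]}q^{Q,[s,T]}(t)\,\tfrac{(T-t)^k}{k!}.
\end{equation*}
Since $k<2Q-n\le 2Q-1$ (using $n\ge1$), the exactness statement together with the elementary identity gives $\sum_{t\in[s,T]}q^{Q,[s,T]}(t)\,\tfrac{(T-t)^k}{k!}=\tfrac{(T-s)^{k+1}}{(k+1)!}$ for every $s\in[0,T]$ (in the degenerate case $s=T$ both sides vanish, as $q^{Q,[T,T]}\equiv0$). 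Hence the displayed right-hand side equals $\sum_{s\in[0,T]}\bar q^{n-1,Q}(s)\,\tfrac{(T-s)^{k+1}}{(k+1)!}$, and since $k+1<2Q-n+1=2Q-(n-1)$ the induction hypothesis applies with exponent $k+1$ and yields $\tfrac{T^{(n-1)+(k+1)}}{((n-1)+(k+1))!}=\tfrac{T^{n+k}}{(n+k)!}$, closing the induction.

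The only step that requires genuine care is the exactness of $q^{Q,[s,T]}$ on polynomials of degree at most $2Q-1$; everything else is bookkeeping. The point that makes the induction go through is that each application of the quadrature raises the polynomial degree by one, so along the $n$ nested applications we meet the degrees $k,k+1,\dots,k+n-1$, the largest of which is $k+n-1\le 2Q-2$ exactly because of the hypothesis $k<2Q-n$ --- precisely the margin needed to remain within the exactness range at every step.
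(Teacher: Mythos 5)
Your proof is correct and follows essentially the same route as the paper's: induction on $n$ via the recursion \eqref{eq:def.bar.q}, interchanging the two finite sums, and using that the Gau\ss-Legendre rule $q^{Q,[s,T]}$ integrates polynomials of degree less than $2Q$ exactly so that $\sum_{t\in[s,T]}q^{Q,[s,T]}(t)\tfrac{(T-t)^k}{k!}=\tfrac{(T-s)^{k+1}}{(k+1)!}$, with the same degree bookkeeping $k<2Q-n$. The only difference is that you additionally sketch the classical orthogonality argument for the exactness of the quadrature rule, which the paper simply invokes as a known fact.
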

\begin{proof}
\sloppy{First, note that the fact that the 
  Gau\ss-Legendre quadrature rule $C([0,T],\R)\ni \varphi \mapsto \sum_{t\in [0,T]}q^{Q,[0,T]}(t)\varphi(t)\in \R$ integrates polynomials of order less than $2Q$ exactly
  implies that for all}
  $s\in[0,T]$, $k\in \N_0 \cap [0, 2Q)$ it holds that
  \begin{equation}  \begin{split}\label{eq:GL.exact}
    \sum_{t\in[s,T]}q^{Q,[s,T]}(t)\,\tfrac{(T-t)^k}{k!}=\int_s^T\tfrac{(T-t)^k}{k!}\,dt=\tfrac{(T-s)^{k+1}}{(k+1)!}.
  \end{split}     \end{equation}
  We now prove \eqref{eq:quad.rule.n} by induction on $n\in \N_0$. For the base case $n=0$ we note that for all $k\in \N_0$ it holds that
  \begin{equation}
  \sum_{t\in[0,T]}\bar{q}^{0,Q}(t)\,\tfrac{(T-t)^k}{k!}= \sum_{t\in[0,T]}\1_{\{0\}}(t)\,\tfrac{(T-t)^k}{k!}=\tfrac{T^{k}}{k!}.
  \end{equation}
  This establishes \eqref{eq:quad.rule.n} in the base case $n=0$. For the induction step $\N_0 \ni n \rightarrow n+1 \in \N$ we observe that \eqref{eq:GL.exact} and the induction hypothesis imply that for all $k\in \N_0\cap [0,2Q-n-1)$ it holds that
  \begin{equation}  \begin{split}
    \sum_{t\in[0,T]}\bar{q}^{n+1,Q}(t)\,\tfrac{(T-t)^k}{k!}
    &=
    \sum_{t\in[0,T]}\bigg[\sum_{s\in[0,t]}\bar{q}^{n,Q}(s)\,q^{Q,[s,T]}(t)\bigg]\tfrac{(T-t)^k}{k!}
    =
    \sum_{s\in[0,T]}\bar{q}^{n,Q}(s)\bigg[\sum_{t\in[s,T]}q^{Q,[s,T]}(t)\,\tfrac{(T-t)^k}{k!}\bigg]
    \\&
    =
    \sum_{s\in[0,T]}\bar{q}^{n,Q}(s)\,\tfrac{(T-s)^{k+1}}{(k+1)!}
    =\tfrac{T^{n+1+k}}{(n+1+k)!}.
  \end{split}     \end{equation}
  This finishes the induction step $\N_0 \ni n \rightarrow n+1 \in \N$. Induction hence establishes \eqref{eq:quad.rule.n}. The proof of Lemma~\ref{l:quad.rule.rn} is thus completed.
\end{proof}
\subsection{Preliminary results for the semi-norms}
We refer to a $[0,\infty]$-valued function as semi-norm
if it is subadditive and absolutely homogeneous. In particular, we do not require
semi-norms to have finite values.
The proof of the following lemma is clear and therefore omitted.
\begin{lemma}[Seminorm property]\label{l:seminorm}
  Assume the setting in Subsection~\ref{sec:setting.full.discretization} and
  let $k\in\N_0$.
  Then the function 
  $\mathcal{M}(\mathcal{B}([0,T]\times \R^d)\otimes\mathcal{F},\mathcal{B}(\R))
  \ni U\mapsto \|U\|_{k,Q}\in[0,\infty]$ is a semi-norm
  in the sense that it is subadditive, nonnegative, and absolutely homogeneous.
\end{lemma}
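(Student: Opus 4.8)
The plan is to verify directly that the function $\|\cdot\|_{k,Q}$ defined in \eqref{eq:seminorms} inherits subadditivity and absolute homogeneity from the underlying $L^2$-norms, using that the quadrature weights $\bar q^{k,Q}(t)$ are nonnegative and that suprema and sums of subadditive/homogeneous quantities remain subadditive/homogeneous. Nonnegativity is immediate since every term in the defining sum is a product of a nonnegative weight with a nonnegative supremum of square roots of expectations.

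First I would fix $V_1, V_2 \in \mathcal{M}(\mathcal{B}([0,T]\times\R^d)\otimes\mathcal F,\mathcal B(\R))$ and, for each triple $(s,u,z)$ with $s\in[t,T]$, $u\in[0,s]$, $z\in\R^d$, apply the triangle inequality for the norm on $L^2(\P)$ (after noting measurability of $(s,z+W_u^0)\mapsto (V_1+V_2)(s,z+W_u^0)$) to get
\begin{equation*}
  \sqrt{\E\big[|(V_1+V_2)(s,z+W_u^0)|^2\big]}
  \le
  \sqrt{\E\big[|V_1(s,z+W_u^0)|^2\big]}
  +
  \sqrt{\E\big[|V_2(s,z+W_u^0)|^2\big]}.
\end{equation*}
Taking the supremum over $(s,u,z)$ on both sides, and using that the supremum of a sum is at most the sum of the suprema, yields that the bracketed quantity in \eqref{eq:seminorms} for $V_1+V_2$ is dominated by the sum of the corresponding quantities for $V_1$ and $V_2$. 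Multiplying by $\bar q^{k,Q}(t)\ge 0$ and summing over $t\in[0,T]$ (a sum with at most countably many nonzero terms, since $\bar q^{k,Q}\in\mathcal Q_T$) preserves the inequality, giving $\|V_1+V_2\|_{k,Q}\le\|V_1\|_{k,Q}+\|V_2\|_{k,Q}$; here one must be slightly careful with the $[0,\infty]$-valued convention, but addition in $[0,\infty]$ is monotone so the step is valid.

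For absolute homogeneity, given $\lambda\in\R$ I would pull $|\lambda|$ out of $\sqrt{\E[|\lambda V(s,z+W_u^0)|^2]}=|\lambda|\sqrt{\E[|V(s,z+W_u^0)|^2]}$, then use that $\sup(|\lambda|\,A)=|\lambda|\sup A$ for nonnegative $A$ and $\sum_t \bar q^{k,Q}(t)\,|\lambda|\,(\cdots)=|\lambda|\sum_t\bar q^{k,Q}(t)(\cdots)$, invoking the convention $0\cdot\infty=0$ from the notation section to handle $\lambda=0$. This gives $\|\lambda V\|_{k,Q}=|\lambda|\,\|V\|_{k,Q}$.

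Honestly, there is no real obstacle here: the statement is a routine bookkeeping exercise, which is presumably why the paper says the proof is clear and omits it. The only points deserving a word of care are the measurability of the compositions with $W_u^0$ (so the expectations make sense), the handling of $\pm\infty$ values under addition and scalar multiplication in the extended reals, and the fact that $\bar q^{k,Q}$ has finite support so all the sums are genuine finite sums. None of these require more than a sentence.
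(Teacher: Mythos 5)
Your proposal is correct and is exactly the routine verification the paper has in mind when it declares the proof ``clear and therefore omitted'': triangle inequality in $L^2(\P)$ pointwise in $(s,u,z)$, monotonicity of suprema and of the weighted sum with the nonnegative weights $\bar q^{k,Q}(t)$ (positivity of the Gau\ss-Legendre weights is what makes these nonnegative), and pulling out $|\lambda|$ with the convention $0\cdot\infty=0$ for homogeneity. Nothing further is needed.
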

The following lemma implies 
that Monte Carlo averages converge in our semi-norms with rate $1/2$.
\begin{lemma}[Linear combinations of iid random variables]\label{l:BDG}
  Assume the setting in Subsection~\ref{sec:setting.full.discretization},
  let $k\in\N_0$, $n,Q\in\N$, $r_1,\ldots,r_n\in\R$, and let $V_1,\ldots,V_n\in 
  \mathcal{M}(\mathcal{B}([0,T]\times \R^d)\otimes\mathcal{F},\mathcal{B}(\R))$
  satisfy for all $(s,x)\in[0,T]\times\R^d$
  that $V_1(s,x),\ldots,V_n(s,x)$ are integrable random variables
  which are
  independent and identically distributed and which are independent of $W^0$.
  Then
  \begin{equation}  \begin{split}
    \left\|\sum_{i=1}^nr_i(V_i-\E[V_i])\right\|_{k,Q}
    = \left\|(V_1-\E[V_1])\right\|_{k,Q}
    \sqrt{\sum_{i=1}^n|r_i|^2}
    \leq \left\|V_1\right\|_{k,Q}
    \sqrt{\sum_{i=1}^n|r_i|^2}.
  \end{split}     \end{equation}
\end{lemma}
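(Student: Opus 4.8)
The plan is to reduce everything to a one-point computation (in the parameters $s,u,z$ that appear in \eqref{eq:seminorms}) of the variance of a linear combination of uncorrelated, identically distributed, mean-zero random variables. Writing out \eqref{eq:seminorms}, the claimed equality $\|\sum_{i=1}^n r_i(V_i-E[V_i])\|_{k,Q}=\|V_1-E[V_1]\|_{k,Q}\sqrt{\sum_{i=1}^n|r_i|^2}$ follows once we establish, for all $s\in[0,T]$, $u\in[0,s]$, $z\in\R^d$, the identity
\[
  \E\!\Big[\big|\smallsum_{i=1}^n r_i(V_i-E[V_i])(s,z+W_u^0)\big|^2\Big]
  =\Big(\smallsum_{i=1}^n|r_i|^2\Big)\,\E\!\big[|(V_1-E[V_1])(s,z+W_u^0)|^2\big]
\]
(valued in $[0,\infty]$): taking square roots, the constant $\sqrt{\sum_i|r_i|^2}$ pulls out of $\sup_z\sup_u\sup_s$ and of $\sum_t\bar q^{k,Q}(t)[\,\cdot\,]$. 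The remaining inequality then reduces to the pointwise bound $\E[|(V_1-E[V_1])(s,z+W_u^0)|^2]\le\E[|V_1(s,z+W_u^0)|^2]$.

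For the identity I would set $Y_i:=(V_i-E[V_i])(s,z+W_u^0)$ and first note that $E[V_i](s,\cdot)$ is the same deterministic function $h_s$ for every $i$ (identical distribution of the $V_i$) and that, by independence of $V_i$ from $W^0$, $(E[V_i])(s,z+W_u^0)=h_s(z+W_u^0)=\E[V_i(s,z+W_u^0)\mid\sigma(W^0)]$ a.s.; hence $\E[Y_i\mid\sigma(W^0)]=0$ and $\E[Y_i]=0$. Since $V_1,\dots,V_n$ are jointly independent and independent of $W^0$, the $Y_i$ are conditionally independent given $\sigma(W^0)$, so for $i\neq j$ the tower property gives $\E[Y_iY_j]=\E\!\big[\E[Y_i\mid\sigma(W^0)]\,\E[Y_j\mid\sigma(W^0)]\big]=0$; and $Y_i\overset{d}{=}Y_1$ for all $i$ (again by identical distribution of the $V_i$ together with their joint independence from $W^0$). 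Expanding $|\sum_i r_iY_i|^2$ and taking expectations, the off-diagonal terms vanish and the diagonal terms equal $|r_i|^2\,\E[|Y_i|^2]=|r_i|^2\,\E[|Y_1|^2]$, which is the displayed identity. For the inequality I would use the $L^2$-orthogonality of conditional expectation: $\E[|Y_1|^2]=\E[|V_1(s,z+W_u^0)|^2]-\E\big[|\E[V_1(s,z+W_u^0)\mid\sigma(W^0)]|^2\big]\le\E[|V_1(s,z+W_u^0)|^2]$ (trivially true, with the usual conventions for $[0,\infty]$-valued quantities, when the right-hand side is $+\infty$).

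It then remains to reassemble: applying $\sqrt{\cdot}$, then $\sup_{z\in\R^d}$, $\sup_{u\in[0,s]}$, $\sup_{s\in[t,T]}$ (none affected by the constant $\sqrt{\sum_i|r_i|^2}\ge0$), then multiplying by $\bar q^{k,Q}(t)\ge0$ and summing over $t\in[0,T]$, yields the equality; combining the equality with $\|V_1-E[V_1]\|_{k,Q}\le\|V_1\|_{k,Q}$ yields the inequality.

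The main obstacle is not the (short) algebra but the measure-theoretic bookkeeping around it: one has to check that $\omega\mapsto V_i(s,z+W_u^0(\omega))(\omega)$ is a genuine random variable (this uses the assumed $\mathcal{B}([0,T]\times\R^d)\otimes\mathcal{F}$-measurability of $V_i$) and that it is integrable, so that the conditioning on $\sigma(W^0)$ and the manipulations above are licit; one also has to read the identity and the estimate in $[0,\infty]$, the only delicate point being the degenerate case in which $\E[|(V_1-E[V_1])(s,z+W_u^0)|^2]=+\infty$ while not all $r_i$ vanish, which is handled by conditioning instead on $\sigma(V_2,\dots,V_n,W^0)$ and observing that the resulting quadratic expression in $V_1$ forces the left-hand side to be $+\infty$ as well.
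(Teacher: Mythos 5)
Your proposal is correct and follows essentially the same route as the paper's proof: condition on $W^{0}$ (identifying $(E[V_i])(s,z+W_u^0)$ with $\E[V_i(s,z+W_u^0)\,|\,W^0]$ via independence), use conditional independence and identical distribution to reduce the second moment of the linear combination to $\big(\sum_i|r_i|^2\big)$ times that of the centered $V_1$, and then bound the centered second moment by the uncentered one; the paper phrases this through conditional variances while you expand the square and kill cross terms by the tower property, which is the same computation. Your additional attention to measurability, integrability, and the $+\infty$ edge case is a welcome refinement of details the paper passes over silently.
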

\begin{proof}
  The definition~\eqref{eq:seminorms} of the semi-norm
  and the fact that for all $(s,x)\in[0,T]\times \R^d$ it holds
  that $(V_i(s,x))_{i\in\{1,\ldots,n\}}$ are independent of $W^0$ and are
  independent and identically distributed 
  imply that
  \begin{equation}  \begin{split}
    &\left\|\sum_{i=1}^nr_i(V_i-\E[V_i])\right\|_{k,Q}
    \\&=
  \sum_{t\in[0,T]}\bar{q}^{k,Q}(t)
  \left[\sup_{s\in[t,T]}\sup_{u\in[0,s]}\sup_{z\in\R^d}
  \E\!\left[\E\!\left[\bigg|\sum_{i=1}^nr_i\Big(V_i(s,z+W_u^0)
                                                 -
                                                 \E\left[V_i(s,z+W_u^0)|W^0\right]\Big)\bigg|^2\,\Bigg|
  W^0\right]\right]\right]^{\frac{1}{2}}
    \\&=
  \sum_{t\in[0,T]}\bar{q}^{k,Q}(t)
  \left[\sup_{s\in[t,T]}\sup_{u\in[0,s]}\sup_{z\in\R^d}
  \E\!\left[\Var\!\left(\sum_{i=1}^nr_iV_i(s,z+W_u^0)|W^0\right)\right]\right]^{\frac{1}{2}}
    \\&=
  \sum_{t\in[0,T]}\bar{q}^{k,Q}(t)
  \left[\sup_{s\in[t,T]}\sup_{u\in[0,s]}\sup_{z\in\R^d}
  \E\left[\sum_{i=1}^n|r_i|^2\Var\!\left(V_1(s,z+W_u^0)\Big|W^0\right)\right]\right]^{\frac{1}{2}}
    \\&=\left\|V_1-\E[V_1]\right\|_{k,Q}
    \sqrt{\sum_{i=1}^n|r_i|^2}
    \\&\leq
  \sum_{t\in[0,T]}\bar{q}^{k,Q}(t)
  \left[\sup_{s\in[t,T]}\sup_{u\in[0,s]}\sup_{z\in\R^d}
  \E\!\left[\E\!\left[\Big|V_1(s,z+W_u^0)\Big|^2\Big|W^0\right]\right]\right]^{\frac{1}{2}}
    \sqrt{\sum_{i=1}^n|r_i|^2}
  \\&=\left\|V_1\right\|_{k,Q}
    \sqrt{\sum_{i=1}^n|r_i|^2}.
  \end{split}     \end{equation}
\end{proof}
\begin{lemma}[Lipschitz property]\label{l:FcircNorm2}
  Assume the setting in Subsection~\ref{sec:setting.full.discretization},
  let $k\in\N_0$, $Q\in\N$, and let $U,V\in
  \mathcal{M}(\mathcal{B}([0,T]\times \R^d)\otimes\mathcal{F},\mathcal{B}(\R))$.
  Then
  \begin{equation}  \begin{split}
    &\|F(U)-F(V)\|_{k,Q} \leq L \|U-V\|_{k,Q}.
  \end{split}     \end{equation}
\end{lemma}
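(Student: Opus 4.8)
The plan is to reduce the claimed estimate for the semi-norms to the pointwise Lipschitz bound \eqref{eq:fLipschitz}. First I would observe that for every fixed $\omega\in\Omega$ the sample functions $U(\cdot,\cdot,\omega)$ and $V(\cdot,\cdot,\omega)$ are elements of $\mathcal{M}(\mathcal{B}([0,T]\times\R^d),\mathcal{B}(\R))$ and that, by the convention that $\funcF$ acts $\omega$-wise on sample functions, one has $(\funcF(U))(\cdot,\cdot,\omega)=\funcF(U(\cdot,\cdot,\omega))$ and likewise for $V$. Hence \eqref{eq:fLipschitz}, applied with $u_1=U(\cdot,\cdot,\omega)$ and $u_2=V(\cdot,\cdot,\omega)$, yields for every $\omega\in\Omega$, every $r\in[0,T]$, and every $y\in\R^d$ that $|(\funcF(U)-\funcF(V))(r,y,\omega)|\leq \LipConst\,|(U-V)(r,y,\omega)|$.

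Next I would fix $t\in[0,T]$, $s\in[t,T]$, $u\in[0,s]$, and $z\in\R^d$. Since the inequality just obtained holds for every deterministic $y\in\R^d$, it may be composed with the random variable $z+W^0_u$, squared, and integrated, giving $\E\big[|(\funcF(U)-\funcF(V))(s,z+W^0_u)|^2\big]\leq \LipConst^2\,\E\big[|(U-V)(s,z+W^0_u)|^2\big]$. Taking square roots and then the supremum over $s\in[t,T]$, $u\in[0,s]$, and $z\in\R^d$ shows that $\sup_{s\in[t,T]}\sup_{u\in[0,s]}\sup_{z\in\R^d}\sqrt{\E[|(\funcF(U)-\funcF(V))(s,z+W^0_u)|^2]}\leq \LipConst\,\sup_{s\in[t,T]}\sup_{u\in[0,s]}\sup_{z\in\R^d}\sqrt{\E[|(U-V)(s,z+W^0_u)|^2]}$, where I am using $\LipConst\cdot\sup=\sup\,\LipConst\cdot(\,\cdot\,)$ in $[0,\infty]$.

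Finally, since the Gau\ss-Legendre weights are nonnegative, an induction on $k$ via \eqref{eq:def.bar.q} gives $\bar q^{k,Q}(t)\geq 0$ for all $t\in[0,T]$; as $\bar q^{k,Q}\in\mathcal{Q}_T$ only finitely many of these weights are nonzero, so multiplying the last display by $\bar q^{k,Q}(t)$ and summing over $t\in[0,T]$ is a legitimate monotone operation. By the definition \eqref{eq:seminorms} of $\|\cdot\|_{k,Q}$ this is exactly the asserted inequality $\|\funcF(U)-\funcF(V)\|_{k,Q}\leq \LipConst\,\|U-V\|_{k,Q}$ (with the paper's conventions $0\cdot\infty=0$, $\LipConst\cdot\infty=\infty$ taking care of degenerate cases).

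I do not anticipate a genuine obstacle here: the statement is essentially a bookkeeping exercise that pushes the scalar Lipschitz bound \eqref{eq:fLipschitz} through the $L^2$-norm, the three nested suprema, and the outer quadrature sum. The only points meriting a sentence of care are the measurability/well-definedness of $\funcF(U),\funcF(V)$ as elements of $\mathcal{M}(\mathcal{B}([0,T]\times\R^d)\otimes\mathcal{F},\mathcal{B}(\R))$ and the composition of the everywhere-in-$y$ inequality with the random point $z+W^0_u$, both of which are immediate once $\funcF$ is read as acting $\omega$-wise.
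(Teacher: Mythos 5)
Your proposal is correct and follows essentially the same route as the paper: write out the definition \eqref{eq:seminorms}, apply the pointwise Lipschitz bound \eqref{eq:fLipschitz} inside the $L^2$-expectation, pull out $\LipConst$, and sum against the nonnegative weights $\bar q^{k,Q}$. Your additional remarks on the $\omega$-wise interpretation of $\funcF(U)$, $\funcF(V)$ and on composing the inequality with the random point $z+W^0_u$ simply make explicit what the paper's one-display proof leaves implicit.
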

\begin{proof}
  The definition~\eqref{eq:seminorms} of the semi-norm
  and the global Lipschitz property~\eqref{eq:fLipschitz} of $\funcF$
  imply that
  \begin{equation}  \begin{split}
    \|F(U)-F(V)\|_{k,Q}
  &=
  \sum_{t\in[0,T]}\bar{q}^{k,Q}(t)
  \left[\sup_{s\in[t,T]}\sup_{u\in[0,s]}\sup_{z\in\R^d}\E\big[|(F(U))(s,z+W_u^0)-(F(V))(s,z+W_u^0)|^2\big]\right]^{\frac{1}{2}}
  \\&\leq
  \sum_{t\in[0,T]}\bar{q}^{k,Q}(t)
  \left[\sup_{s\in[t,T]}\sup_{u\in[0,s]}\sup_{z\in\R^d}L^2\E\big[|U(s,z+W_u^0)-V(s,z+W_u^0)|^2\big]\right]^{\frac{1}{2}}
    \\&=
     L \|U-V\|_{k,Q}.
  \end{split}     \end{equation}
\end{proof}
\begin{lemma}\label{l:NormOfInt}
  Assume the setting in Subsection~\ref{sec:setting.full.discretization},
  let $k\in\N_0$, $Q\in\N$, and let $U\in
  \mathcal{M}(\mathcal{B}([0,T]\times \R^d)\otimes\mathcal{F},\mathcal{B}(\R))$
  satisfy for all $(s,x)\in[0,T]\times\R^d$ that $U(s,x)$ and $W^0$ are independent.
  Then
  \begin{equation}  \begin{split}
    \left\|[0,T]\times \R^d \ni (s,z)\mapsto 
    \sum_{t\in[s,T]}q^{Q,[s,T]}(t) U(t,z+W_t^0-W_s^0)\in\R \right\|_{k,Q} \leq \|U\|_{k+1,Q}
  \end{split}     \end{equation}
\end{lemma}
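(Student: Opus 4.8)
The plan is to express the norm through the auxiliary non-increasing function $\Psi_V\colon[0,T]\to[0,\infty]$ defined by $\Psi_V(r)=\sup_{s\in[r,T]}\sup_{u\in[0,s]}\sup_{z\in\R^d}(\E[|V(s,z+W_u^0)|^2])^{1/2}$, so that by \eqref{eq:seminorms} one has $\|V\|_{n,Q}=\sum_{r\in[0,T]}\bar q^{n,Q}(r)\,\Psi_V(r)$ for every $n$, and $\Psi_V$ is non-increasing because enlarging $r$ shrinks the index set $[r,T]$. Writing $G$ for the random field $(s,z)\mapsto\sum_{t\in[s,T]}q^{Q,[s,T]}(t)\,U(t,z+W_t^0-W_s^0)$ whose norm must be bounded, the heart of the matter is the pointwise estimate $\Psi_G(r)\le\sum_{t\in[r,T]}q^{Q,[r,T]}(t)\,\Psi_U(t)$ for all $r\in[0,T]$. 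Granting it, I would multiply by $\bar q^{k,Q}(r)$, sum over $r\in[0,T]$, interchange the two sums (legitimate since the weights lie in $\mathcal Q_T$, so everything is a finite sum of nonnegative terms), and read off from the recursion \eqref{eq:def.bar.q}, i.e.\ $\bar q^{k+1,Q}(t)=\sum_{r\in[0,t]}\bar q^{k,Q}(r)\,q^{Q,[r,T]}(t)$, that $\|G\|_{k,Q}\le\sum_{t\in[0,T]}\bar q^{k+1,Q}(t)\,\Psi_U(t)=\|U\|_{k+1,Q}$, which is the assertion.

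To establish the pointwise estimate, fix $r\in[0,T]$ and take arbitrary $s\in[r,T]$, $u\in[0,s]$, $z\in\R^d$. The Gauss--Legendre weights $q^{Q,[s,T]}(t)$ are nonnegative and vanish outside a finite set, so Minkowski's inequality in $L^2(\P)$ yields $(\E[|G(s,z+W_u^0)|^2])^{1/2}\le\sum_{t\in[s,T]}q^{Q,[s,T]}(t)\,(\E[|U(t,z+W_u^0+W_t^0-W_s^0)|^2])^{1/2}$. For $u\le s\le t$ the vectors $W_u^0$ and $W_t^0-W_s^0$ are independent centered Gaussians (increments over the disjoint intervals $[0,u]$ and $[s,t]$), so their sum has the same distribution as $W_{u+t-s}^0$; combined with the hypothesis that $U(t,\cdot)$ is independent of $W^0$ this upgrades to the identity $\E[|U(t,z+W_u^0+W_t^0-W_s^0)|^2]=\E[|U(t,z+W_{u+t-s}^0)|^2]$. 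Since $0\le u+t-s\le t$ (because $0\le u\le s\le t$), the right-hand side is at most $\Psi_U(t)^2$ by the definition of $\Psi_U$ (take there time $t\in[t,T]$, diffusion time $u+t-s\in[0,t]$, and spatial point $z$). Hence $(\E[|G(s,z+W_u^0)|^2])^{1/2}\le\sum_{t\in[s,T]}q^{Q,[s,T]}(t)\,\Psi_U(t)$, a bound free of $u$ and $z$; taking the supremum over $u$, $z$, then over $s\in[r,T]$, and invoking Lemma~\ref{l:scaling.LG} with the non-increasing function $\psi=\Psi_U$ to replace $[s,T]$ by $[r,T]$ (the case $s=T$ being trivial since $q^{Q,[T,T]}\equiv0$) completes the pointwise estimate.

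The only step that genuinely uses the independence hypothesis --- and the step I expect to need the most care --- is the distributional identity $\E[|U(t,z+W_u^0+W_t^0-W_s^0)|^2]=\E[|U(t,z+W_{u+t-s}^0)|^2]$. I would prove it by conditioning on $W^0$: since $U$ is jointly measurable, the map $\varphi_t\colon\R^d\to[0,\infty]$, $\varphi_t(y)=\E[|U(t,y)|^2]$, is Borel measurable, and independence of $U(t,\cdot)$ from $W^0$ gives $\E[|U(t,z+\xi)|^2\mid W^0]=\varphi_t(z+\xi)$ for every $W^0$-measurable random vector $\xi$; taking expectations, both sides of the claimed identity equal $\int_{\R^d}\varphi_t(z+y)\,\nu(dy)$, where $\nu$ is the common law of $W_u^0+W_t^0-W_s^0$ and of $W_{u+t-s}^0$. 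Everything else --- Minkowski's inequality, positivity of the Gauss--Legendre weights, Lemma~\ref{l:scaling.LG}, and the bookkeeping identity \eqref{eq:def.bar.q} --- is routine, and no convergence questions arise since all sums in play are finite.
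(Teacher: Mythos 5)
Your proposal is correct and follows essentially the same route as the paper's proof: Minkowski's inequality in $L^2(\P)$, the identity $W_u^0+W_t^0-W_s^0\overset{d}{=}W_{u+t-s}^0$ combined with the independence hypothesis, Lemma~\ref{l:scaling.LG} for the non-increasing supremum function, and the recursion \eqref{eq:def.bar.q} to collapse the double sum into $\bar q^{k+1,Q}$. The only difference is presentational (you isolate the function $\Psi$ and spell out the conditioning/freezing step that the paper subsumes under ``independence''), so no further comment is needed.
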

\begin{proof}
  The definition~\eqref{eq:seminorms} of the semi-norm,
  the triangle inequality,
  independence,
  Lemma~\ref{l:scaling.LG},
  and
  the definition~\eqref{eq:def.bar.q} of $\bar{q}^{k+1,Q}$
  yield that
  \begin{equation}  \begin{split}
    &\bigg\|[0,T]\times \R^d \ni (s,z)\mapsto 
    \sum_{r\in[s,T]}q^{Q,[s,T]}(r) U(r,z+W_r^0-W_s^0)\in\R \bigg\|_{k,Q}
    \\&=
  \sum_{t\in[0,T]}\bar{q}^{k,Q}(t)
  \left[\sup_{s\in[t,T]}\sup_{u\in[0,s]}\sup_{z\in\R^d}
  \left(\E\bigg[\Big|\sum_{r\in[s,T]}q^{Q,[s,T]}(r) U(r,z+W_u^{0}+W_r^0-W_s^0)
  \Big|^2\bigg]\right)^{\frac{1}{2}}\right]
    \\&\leq
  \sum_{t\in[0,T]}\bar{q}^{k,Q}(t)
  \left[\sup_{s\in[t,T]}\sup_{u\in[0,s]}\sup_{z\in\R^d}
  \sum_{r\in[s,T]}q^{Q,[s,T]}(r)\left(\E\bigg[\Big| U(r,z+W_u^{0}+W_r^0-W_s^0)
  \Big|^2\bigg]\right)^{\frac{1}{2}}\right]
    \\&\leq
  \sum_{t\in[0,T]}\bar{q}^{k,Q}(t)\sup_{s\in[t,T]}\sum_{r\in[s,T]}q^{Q,[s,T]}(r)
  \left[\sup_{v\in[r,T]}\sup_{u\in[0,v]}\sup_{z\in\R^d}
  \E\bigg[\Big| U(v,z+W_u^{0})
  \Big|^2\bigg]\right]^{\frac{1}{2}}
    \\&=
  \sum_{t\in[0,T]}\bar{q}^{k,Q}(t)\sum_{r\in[t,T]}q^{Q,[t,T]}(r)
  \left[\sup_{v\in[r,T]}\sup_{u\in[0,v]}\sup_{z\in\R^d}
  \E\bigg[\Big| U(v,z+W_u^{0})
  \Big|^2\bigg]\right]^{\frac{1}{2}}
    \\&=
  \sum_{r\in[0,T]}\bar{q}^{k+1,Q}(r)
  \left[\sup_{v\in[r,T]}\sup_{u\in[0,v]}\sup_{z\in\R^d}
  \E\bigg[\Big| U(v,z+W_u^{0})
  \Big|^2\bigg]\right]^{\frac{1}{2}}
    \\&= \|U\|_{k+1,Q}.
  \end{split}     \end{equation}
\end{proof}

\begin{lemma}[Monotonicity]\label{l:NormMonotonicity}
  Assume the setting in Subsection~\ref{sec:setting.full.discretization},
  let $k\in\N_0$, $Q\in\N$, let $\mathcal{G}\subseteq\mathcal{F}$ be a $\sigma$-algebra, and let $U,V\in
  \mathcal{M}(\mathcal{B}([0,T]\times \R^d)\otimes\mathcal{F},\mathcal{B}(\R))$
  satisfy $|U|\leq |V|$.
  Then
  \begin{equation}  \begin{split}
    \left\|\E\big[|U|\big|\mathcal{G}\big]\right\|_{k,Q}
    \leq
    \left\|U\right\|_{k,Q}
    \leq
    \left\|V\right\|_{k,Q}.
  \end{split}     \end{equation}
\end{lemma}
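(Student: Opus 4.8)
The plan is to read off both inequalities from the defining formula~\eqref{eq:seminorms} of $\|\cdot\|_{k,Q}$, using that the weights $\bar{q}^{k,Q}$ are nonnegative (this follows by induction on $k$ from~\eqref{eq:def.bar.q} and~\eqref{eq:def_gauss_leg}, since every Gau\ss--Legendre quadrature weight $q^{Q,[s,T]}(\cdot)$ is nonnegative), together with the monotonicity of $[0,\infty)\ni t\mapsto t^{2}$, of expectations, and of suprema. The second inequality is then immediate: the pointwise bound $|U|\le|V|$ gives $\E[|U(s,z+W_u^0)|^{2}]\le\E[|V(s,z+W_u^0)|^{2}]$ for all $s\in[0,T]$, $u\in[0,s]$, $z\in\R^d$, and applying square roots, then the iterated suprema, and then the $\bar{q}^{k,Q}$-weighted sum in~\eqref{eq:seminorms} yields $\|U\|_{k,Q}\le\|V\|_{k,Q}$.

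For the first inequality I would establish the pointwise-in-$(s,u,z)$ estimate
\[
  \E\!\left[\,\bigl|\E[\,|U|\,\big|\,\mathcal{G}\,](s,z+W_u^0)\bigr|^{2}\,\right]\ \le\ \E\!\left[\,|U(s,z+W_u^0)|^{2}\,\right],
\]
valid for all $s\in[0,T]$, $u\in[0,s]$, $z\in\R^d$; the seminorm inequality then follows exactly as above. To this end I would fix a regular conditional probability $\kappa$ of $\P$ given $\mathcal{G}$ and use it to realize a jointly measurable version of the conditional expectation, $\E[\,|U|\,\big|\,\mathcal{G}\,](s,x,\omega)=\int_\Omega|U(s,x,\omega')|\,\kappa(\omega,d\omega')$. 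Cauchy--Schwarz for the probability measure $\kappa(\omega,\cdot)$ then gives, for every $(s,x,\omega)$ (with no exceptional null set), $\bigl|\E[\,|U|\,\big|\,\mathcal{G}\,](s,x,\omega)\bigr|^{2}\le\int_\Omega|U(s,x,\omega')|^{2}\kappa(\omega,d\omega')=\E[\,|U|^{2}\,\big|\,\mathcal{G}\,](s,x,\omega)$. Since $\E[\,|U|\,\big|\,\mathcal{G}\,](s,x)$ is $\mathcal{G}$-measurable for each fixed $x$ while $z+W_u^0$ is $\sigma(W^0)$-measurable, the freezing rule for conditional expectations identifies $\E[\,|U|\,\big|\,\mathcal{G}\,](s,z+W_u^0)$ with $\E[\,|U(s,z+W_u^0)|\,\big|\,\mathcal{G}\vee\sigma(W^0)\,]$; here one uses that in the setting of Subsection~\ref{sec:setting.full.discretization} the Brownian motion $W^0$ is fresh, i.e.\ independent of the randomness entering $U$ and $\mathcal{G}$, so that enlarging $\mathcal{G}$ by $\sigma(W^0)$ does not change the conditional expectation of $|U(s,c)|$ given $\mathcal{G}$ for deterministic $c$. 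A second application of conditional Jensen's inequality, now with $\mathcal{H}:=\mathcal{G}\vee\sigma(W^0)$, gives $\bigl|\E[\,|U|\,\big|\,\mathcal{G}\,](s,z+W_u^0)\bigr|^{2}\le\E[\,|U(s,z+W_u^0)|^{2}\,\big|\,\mathcal{H}\,]$ $\P$-a.s., and taking expectations (tower property) completes the estimate.

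The step I expect to be the main obstacle is precisely this identification of $\E[\,|U|\,\big|\,\mathcal{G}\,]$ composed with the random point $z+W_u^0$ with a single conditional expectation given $\mathcal{G}\vee\sigma(W^0)$: without the independence of $W^0$ from $(U,V,\mathcal{G})$ the naive per-$(s,u,z)$ inequality can genuinely fail, and one would then have to exploit the supremum over the translation $z$ and over the time $u\in[0,s]$ (note that $u=0$ is admitted) that is built into~\eqref{eq:seminorms}. The remaining ingredients — existence of the jointly measurable version via $\kappa$, nonnegativity of $\bar{q}^{k,Q}$, and the interchange of the suprema with the weighted sum — are routine.
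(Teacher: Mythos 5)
Your handling of the second inequality (nonnegativity of the weights, pointwise bound $|U|\le|V|$, monotonicity of the iterated suprema and the weighted sum) is exactly the paper's argument. The problem is the first inequality: as you yourself flag, your proof of it needs the extra hypothesis that $W^0$ is independent of $U$ and of $\mathcal{G}$, and this is not an assumption of the lemma, so what you prove is a strictly weaker statement. The root cause is your reading of the object $\E\big[|U|\big|\mathcal{G}\big]$ inside the seminorm: you condition at a deterministic spatial point (building a jointly measurable version through a regular conditional probability) and only afterwards substitute the random point $z+W_u^0$, which then forces the freezing argument and hence the independence assumption. The paper composes in the opposite order: in its proof the first step is the identity $\big\|\E\big[|U|\big|\mathcal{G}\big]\big\|_{k,Q}=\sum_{t\in[0,T]}\bar{q}^{k,Q}(t)\big[\sup_{s\in[t,T]}\sup_{u\in[0,s]}\sup_{z\in\R^d}\big(\E\big[\big|\E\big[|U(s,z+W_u^0)|\,\big|\,\mathcal{G}\big]\big|^2\big]\big)^{1/2}\big]$, i.e.\ the conditional expectation is taken of the already-composed random variable $|U(s,z+W_u^0)|$. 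With that reading, conditional Jensen plus the tower property give $\E\big[\big|\E\big[|U(s,z+W_u^0)|\,\big|\,\mathcal{G}\big]\big|^2\big]\le\E\big[|U(s,z+W_u^0)|^2\big]$ for every $(s,u,z)$, and the lemma follows with no regular conditional probabilities, no freezing rule, and no independence of $W^0$ from $(U,\mathcal{G})$.

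This is not merely a cosmetic difference of interpretation: in the places where the paper invokes this lemma (the estimates \eqref{eq:MC.error} and \eqref{eq:final.Timeerror} in the proof of Theorem~\ref{thm:rate}) the conditioned fields are themselves built from $W^0$, so the independence your version requires would not be available there. You correctly sensed that the ``condition first, substitute later'' object is genuinely different and can violate the per-$(s,u,z)$ bound without independence; the resolution is not to push the kernel/freezing machinery further (nor to fall back on the suprema over $z$ and $u$), but to adopt the composition-then-condition meaning of $\E\big[|U|\big|\mathcal{G}\big]$, after which your Cauchy--Schwarz step under the kernel collapses to the one-line conditional Jensen argument of the paper.
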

\begin{proof}
  The definition~\eqref{eq:seminorms} of the semi-norm,
  Jensen's inequality,
  and the hypothesis that $|U|\leq |V|$ imply that
  \begin{equation}  \begin{split}
    \left\|\E\big[|U|\big|\mathcal{G}\big]\right\|_{k,Q}
    &=
  \sum_{t\in[0,T]}\bar{q}^{k,Q}(t)
  \left[\sup_{s\in[t,T]}\sup_{u\in[0,s]}\sup_{z\in\R^d}
  \left(\E\bigg[\Big|\E\big[|U(s,z+W_u^{0})|\big|\mathcal{G}\big]
  \Big|^2\bigg]\right)^{\frac{1}{2}}\right]
    \\&\leq
  \sum_{t\in[0,T]}\bar{q}^{k,Q}(t)
  \left[\sup_{s\in[t,T]}\sup_{u\in[0,s]}\sup_{z\in\R^d}
  \left(\E\bigg[\Big|U(s,z+W_u^{0})
  \Big|^2\bigg]\right)^{\frac{1}{2}}\right]
  =
    \left\|U\right\|_{k,Q}
    \\&\leq
  \sum_{t\in[0,T]}\bar{q}^{k,Q}(t)
  \left[\sup_{s\in[t,T]}\sup_{u\in[0,s]}\sup_{z\in\R^d}
  \left(\E\bigg[\Big|V(s,z+W_u^{0})
  \Big|^2\bigg]\right)^{\frac{1}{2}}\right]
  =
    \left\|V\right\|_{k,Q}.
  \end{split}     \end{equation}
\end{proof}
The following lemma specifies the values of our semi-norms of constant functions.
It follows directly from
the definition~\eqref{eq:seminorms} of the semi-norms
and from
Lemma~\ref{l:quad.rule.rn}. Its proof is therefore omitted.
\begin{lemma}[Seminorm of constants]\label{l:NormOfConstants}
  Assume the setting in Subsection~\ref{sec:setting.full.discretization}
  and
  let $Q\in\N$, $k\in\N_0\cap[0,2Q-1]$.
  Then $\left\|1\right\|_{k,Q}=\tfrac{T^k}{k!}$.
\end{lemma}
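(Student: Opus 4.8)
The plan is to unwind the definition \eqref{eq:seminorms} of the semi-norm and then to invoke Lemma~\ref{l:quad.rule.rn}. First I would apply \eqref{eq:seminorms} to the constant function $V\equiv 1$. Since $\E\big[|1|^2\big]=1$ for every $s\in[0,T]$, $u\in[0,s]$, and $z\in\R^d$, the inner triple supremum in \eqref{eq:seminorms} equals $1$, so that
\begin{equation}
  \|1\|_{k,Q}=\sum_{t\in[0,T]}\bar{q}^{k,Q}(t)\cdot 1=\sum_{t\in[0,T]}\bar{q}^{k,Q}(t).
\end{equation}

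Next I would recognise the right-hand side as the $k=0$ instance of \eqref{eq:quad.rule.n} in Lemma~\ref{l:quad.rule.rn}: taking $n=k$ there and the exponent equal to $0$ gives
\begin{equation}
  \sum_{t\in[0,T]}\bar{q}^{k,Q}(t)\,\tfrac{(T-t)^0}{0!}=\tfrac{T^{k}}{k!},
\end{equation}
which is precisely $\sum_{t\in[0,T]}\bar{q}^{k,Q}(t)=\tfrac{T^k}{k!}$. The only hypothesis of Lemma~\ref{l:quad.rule.rn} that must be checked is $0\in\N_0\cap[0,2Q-k)$, i.e.\ $k\le 2Q-1$, which is exactly the assumed range $k\in\N_0\cap[0,2Q-1]$. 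Chaining the two displays yields $\|1\|_{k,Q}=\tfrac{T^k}{k!}$.

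There is essentially no obstacle here; the assertion is the composition of the definition \eqref{eq:seminorms} with the $k=0$ case of Lemma~\ref{l:quad.rule.rn}. The only point deserving attention is the degree restriction $k\le 2Q-1$, which is precisely what makes the relevant instance of Lemma~\ref{l:quad.rule.rn} available (equivalently, it guarantees that the $Q$-node Gau\ss--Legendre rule integrates exactly the low-degree polynomials arising in the recursion \eqref{eq:def.bar.q}); outside this range the identity need not hold. This is why, as the text notes, the proof reduces to these two observations and can safely be omitted.
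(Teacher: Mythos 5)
Your argument is correct and coincides with the route the paper indicates (the proof is omitted there precisely because it follows directly from the definition \eqref{eq:seminorms} and Lemma~\ref{l:quad.rule.rn}). Your check that the hypothesis $k\le 2Q-1$ is exactly what is needed to invoke \eqref{eq:quad.rule.n} with exponent $0$ is the only nontrivial point, and you handle it correctly.
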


\subsection{Error analysis for multilevel Picard iteration}
\begin{lemma}[Approximations are integrable]\label{l:approximations.integrable}
 Assume the setting in Subsection~\ref{sec:setting.full.discretization},
let $z\in\R^d$, $M,Q\in\N$, and assume for all $s\in[0,T]$, $t\in [s,T]$ that $\E\big[|g(z+W_t^0)|+|(\funcF(0))(t,z+W_s^0)|\big]<\infty$.
Then 
\begin{enumerate} [(i)] 
\item \label{item:approximations.integrable.i} for all
$n\in\N_0$, $\theta\in\Theta$, $s\in[0,T]$, $t\in [s,T]$ it holds that
\begin{equation} \label{eq:approximations.integrable} \begin{split}
  \E\Big[\big|U_{n,M,Q}^{\theta}(t,z+W_s^0)\big|
+\big|
    \big(\funcF( U_{n,M,Q}^{\theta})\big)(t,z+W_{s}^0)
\big|
\Big]<\infty
\end{split}     \end{equation}
and
\item \label{item:approximations.integrable.ii} for all 
$n\in\N_0$, $\theta\in\Theta$, $s\in[0,T]$ it holds that
\begin{equation}  \begin{split}\label{eq:discreteFeynmanKac}
  \E\!\left[U_{n+1,M,Q}^{\theta}(s,z)
    \right]
    =
    \E\!\left[g(z+W_{T-s}^0)+\sum_{t\in[s,T]}q^{Q,[s,T]}(t)
    \big(\funcF( U_{n,M,Q}^{\theta})\big)(t,z+W_{t-s}^0)
    \right].
\end{split}     \end{equation}
\end{enumerate}
\end{lemma}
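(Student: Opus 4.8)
\emph{Overall strategy.} I would prove both parts by unfolding the recursion \eqref{eq:def:U} while keeping careful track of the independence structure carried by the index set $\Theta$. The structural fact I would establish first (by a one-line induction on $l$ using \eqref{eq:def:U}) is that for every $l\in\N_0$ and $\theta\in\Theta$ the random field $U_{l,M,Q}^{\theta}$ is measurable with respect to $\mathcal{B}([0,T]\times\R^d)$ and the sigma-algebra generated by the Brownian motions $W^{\theta'}$ whose index $\theta'$ has $\theta$ as a \emph{strict} prefix; in particular $W^0$ is independent of $U_{l,M,Q}^{\theta}$ for every $l$ and $\theta$, and the Brownian motions/fields attached to the different summands of \eqref{eq:def:U} are mutually independent. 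I would also record the elementary consequence of \eqref{eq:fLipschitz} that $|(\funcF(v))(r,y)|\le |(\funcF(0))(r,y)|+\LipConst\,|v(r,y)|$.

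\emph{Part \eqref{item:approximations.integrable.i}.} This I would prove by strong induction on $n\in\N_0$. The base case $n=0$ is immediate since $U_{0,M,Q}^{\theta}=0$, so the left-hand side of \eqref{eq:approximations.integrable} reduces to $\E[|(\funcF(0))(t,z+W_s^0)|]$, which is finite by hypothesis. For the step, assuming \eqref{item:approximations.integrable.i} at all levels $0,\dots,n$, I would substitute $(t,z+W_s^0)$ for $(t,x)$ in \eqref{eq:def:U} (with $n$ replaced by $n+1$) and bound the expected absolute value of each of the finitely many summands. For the $g$-summand I would use that $W_s^0+W_T^{(\theta,0,-i)}-W_t^{(\theta,0,-i)}$ is a centered Gaussian vector with covariance $(s+T-t)\operatorname{I}_{\R^{d\times d}}$ and $s+T-t\in[0,T]$, hence identically distributed to $W_{s+T-t}^0$, so its expected absolute value equals $\E[|g(z+W_{s+T-t}^0)|]<\infty$. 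For each $\funcF$-summand at level $l\le n$ I would combine the elementary bound above with the observations that the shift $W_s^0+W_r^{(\theta,l,i)}-W_t^{(\theta,l,i)}$ (for $r\in[t,T]$) is identically distributed to $W_{s+r-t}^0$ with $s+r-t\in[0,T]$ and is independent of $U_{l,M,Q}^{(\theta,l,i,r)}$, that $W^0$ is likewise independent of $U_{l,M,Q}^{(\theta,l,i,r)}$, and hence, by a Fubini argument on the nonnegative integrand, that $\E[|U_{l,M,Q}^{(\theta,l,i,r)}(r,z+W_s^0+W_r^{(\theta,l,i)}-W_t^{(\theta,l,i)})|]=\E[|U_{l,M,Q}^{(\theta,l,i,r)}(r,z+W_{s+r-t}^0)|]$, which is finite by the induction hypothesis applied at level $l$ with index $(\theta,l,i,r)$ and times $s+r-t\le r$. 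The same argument handles the $U_{l-1,M,Q}^{(\theta,-l,i,r)}$-terms.

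\emph{Part \eqref{item:approximations.integrable.ii}.} Here I would write out $U_{n+1,M,Q}^{\theta}(s,z)$ via \eqref{eq:def:U}; by part \eqref{item:approximations.integrable.i} every summand is integrable, so one may take expectations term by term. The $g$-part gives $\E[g(z+W_T^{(\theta,0,-1)}-W_s^{(\theta,0,-1)})]=\E[g(z+W_{T-s}^0)]$, using that the summands indexed by $i$ are identically distributed and that $W_T^{(\theta,0,-i)}-W_s^{(\theta,0,-i)}\stackrel{d}{=}W_{T-s}^0$. For the double sum $\sum_{l=0}^{n}\sum_{t\in[s,T]}$ I would use, for fixed $l$ and $t$, that the $M^{n+1-l}$ summands indexed by $i$ are identically distributed (the index blocks $(\theta,l,i)$, $(\theta,l,i,\cdot)$, $(\theta,-l,i,\cdot)$ are pairwise disjoint over $i$ and the driving Brownian motions are i.i.d.), so the $\tfrac1{M^{n+1-l}}$-average has the same expectation as a single term; and that $\funcF(U_{l,M,Q}^{(\theta,l,1,t)})$ has the same law, as a random field, as $\funcF(U_{l,M,Q}^{\theta})$, while the shift $W_t^{(\theta,l,1)}-W_s^{(\theta,l,1)}\stackrel{d}{=}W_{t-s}^0$ is independent of it, so its expectation equals $\E[(\funcF(U_{l,M,Q}^{\theta}))(t,z+W_{t-s}^0)]$ (and analogously for the $U_{l-1,M,Q}$-term). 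Writing $a_l:=\sum_{t\in[s,T]}q^{Q,[s,T]}(t)\,\E[(\funcF(U_{l,M,Q}^{\theta}))(t,z+W_{t-s}^0)]$, the double sum becomes $\sum_{l=0}^{n}(a_l-\1_{\N}(l)\,a_{l-1})=a_n$ by telescoping, and combining with the $g$-part and pulling the finite $t$-sum back inside the expectation (again licensed by part \eqref{item:approximations.integrable.i}) yields \eqref{eq:discreteFeynmanKac}.

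\emph{Main obstacle.} There is no hard analytic estimate; the whole proof is a careful bookkeeping of the tree-indexed independence structure — that $U_{l,M,Q}^{\theta}$ depends only on Brownian motions strictly below $\theta$, that distinct $i$'s feed in independent copies, and that an independent Gaussian shift of the correct variance may be replaced by an increment of $W^0$ without changing expectations. Setting up these symmetry and distributional identities cleanly (together with the associated measurability bookkeeping for $\funcF(U_{l,M,Q}^{\theta})$) is where the work lies; once they are in place the induction in \eqref{item:approximations.integrable.i} and the telescoping in \eqref{item:approximations.integrable.ii} are routine.
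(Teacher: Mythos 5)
Your proposal is correct and follows essentially the same route as the paper: part \eqref{item:approximations.integrable.i} by induction on $n$ with the base case $U^{\theta}_{0,M,Q}=0$, the distributional identification of the Gaussian shifts with increments of $W^0$, and the bound $|(\funcF(v))(r,y)|\le|(\funcF(0))(r,y)|+\LipConst|v(r,y)|$ (which, as in the paper's use of \eqref{eq:fLipschitz}, also closes the induction for the $\funcF(U^{\theta}_{n+1,M,Q})$-term once $\E[|U^{\theta}_{n+1,M,Q}(t,z+W^0_s)|]<\infty$ is shown); and part \eqref{item:approximations.integrable.ii} by termwise expectations, identical distribution across the $i$-samples and across $\theta$, independence of the increments, and the telescoping sum. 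The only difference is presentational: you spell out the tree-indexed independence and identical-distribution bookkeeping that the paper invokes implicitly.
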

\begin{proof}
  We prove \eqref{item:approximations.integrable.i} by induction on $n\in \N_0$. For the base case $n=0$ we note that for all
$\theta\in\Theta$, $s\in[0,T]$, $t\in [s,T]$ it holds that
\begin{equation}  \begin{split}
  \E\Big[\big|U_{0,M,Q}^{\theta}(t,z+W_s^0)\big|
+\big|
    \big(\funcF( U_{0,M,Q}^{\theta})\big)(t,z+W_{s}^0)
\big|
\Big]=\E\Big[\big|
    ((\funcF( 0))(t,z+W_{s}^0)
\big|
\Big]<\infty.
 \end{split}     \end{equation}
 This establishes \eqref{item:approximations.integrable.i} in the base case $n=0$. For the induction step $\N_0 \ni n \rightarrow n+1 \in \N$
 let $n\in \N_0$ and assume that \eqref{item:approximations.integrable.i} holds for $n=0$, $n=1$, $\ldots$, $n=n$. 
 The induction hypothesis and~\eqref{eq:def:U}
  imply that for all $\theta\in\Theta$, $s\in[0,T]$, $t\in[s,T]$ it holds that
\begin{multline}
  \E\Big[\big|U_{n+1,M,Q}^{\theta}(t,z+W_s^0)\big|
\Big]
\leq \E\Big[\big|g(z+W^0_{T-t+s})\big|\Big]
\\
+\sum_{l=0}^{n}\sum_{r\in[t,T]}\frac{q^{Q,[t,T]}(r)}{M^{n+1-l}}\sum_{i=1}^{M^{n+1-l}}\sum_{k\in\{l-1,l\}\cap\N_0}\max_{j\in\{-l,l\}}
  \E\!\left[\big|\big(\funcF(U_{k,M,Q}^{(\theta,j,i,r)})\big)(r,z+W^0_{s+r-t})\big|\right]<\infty.
\end{multline}
Combining this with \eqref{eq:fLipschitz} proves for all $\theta\in\Theta$, $s\in[0,T]$, $t\in[s,T]$ that
\begin{equation} \begin{split}
  &\E\Big[
    \big|\big(\funcF( U_{n+1,M,Q}^{\theta})\big)(t,z+W_{s}^0)
\big|
\Big]
\\&\leq \E\Big[
    \big|\big(\funcF( U_{n+1,M,Q}^{\theta})\big)(t,z+W_{s}^0)
-\big(\funcF( 0)\big)(t,z+W_{s}^0)\big|
\Big]+\E\Big[
    \big|\big(\funcF( 0)\big)(t,z+W_{s}^0)
\big|
\Big]\\
&\leq L\E\Big[
    \big| U_{n+1,M,Q}^{\theta}(t, z+W_{s}^0)
\big|
\Big]+\E\Big[
    \big|\big(\funcF( 0)\big)(t,z+W_{s}^0)
\big|
\Big]
<\infty.
\end{split}     \end{equation}
This finishes the induction step $\N_0 \ni n \rightarrow n+1 \in \N$. Induction hence 
establishes~\eqref{item:approximations.integrable.i}.
  Next we note that \eqref{eq:def:U}, the fact that 
  $(U_{n,M,Q}^{\theta})_{n\in\N_0}$, $\theta\in\Theta$, are identically
distributed, and a telescope argument yield that
  for all $n\in\N_0$, $\theta\in\Theta$, $s\in[0,T]$
  it holds that
  \begin{equation}  \begin{split}
    &\E\!\left[U_{n+1,M,Q}^{\theta}(s,z))\right]
    -\E\!\left[g(z+W_{T-s}^{0})\right]
    \\&
    =\sum_{l=0}^{n}\sum_{v\in[s,T]}q^{Q,[s,T]}(v)\,
    \E\Big[\!
    \left(\funcF( U_{l,M,Q}^{0})-\1_{\N}(l)\funcF( U_{l-1,M,Q}^{0})\right)\!(v,z+W_v^0-W_s^0)
    \Big]
    \\&
    =
    \sum_{v\in[s,T]}q^{Q,[s,T]}(v)\,
    \E\Big[\!\left(\funcF( U_{n,M,Q}^{0})\right)\!(v,z+W_v^0-W_s^0)
    \Big]
    \\&
    =\E\!\left[
    \sum_{v\in[s,T]}q^{Q,[s,T]}(v)
    \left(\funcF( U_{n,M,Q}^{\theta})\right)\!(v,z+W_{v-s}^0)
    \right].
  \end{split}     \end{equation}
  This establishes \eqref{item:approximations.integrable.ii}. The proof of Lemma \ref{l:approximations.integrable} is thus completed.
\end{proof}
\begin{lemma}[Nonlinear Feynman-Kac formula]\label{l:nonlinear.FK.formula}
 Assume the setting in Subsection~\ref{sec:setting.full.discretization},
let $z\in\R^d$, and assume for all $s\in[0,T]$ that
\begin{equation}  \begin{split}\label{eq:ldeltaintegrand3}
  \E\!\left[ \sup_{t\in[s,T]}\left| u^{\infty}(t,z+W_{t-s}^0)\right| +\int_s^T\left|\big(\funcF(0)\big)(t,z+W_{t-s}^0)\right|\,dt
         \right]<\infty.
\end{split}     \end{equation}
Then 
\begin{enumerate} [(i)] 
\item \label{item:approximations.integrable.iFC}
  for all $s\in[0,T]$ it holds that
  \begin{equation}  \begin{split}\label{eq:approximations.integrable.iFC}
  \E\left[
  \sup_{t\in[s,T]}
     | u^{\infty}(t,z+W_{t}^0-W_{s}^0)|
         +\int_s^T|(\funcF(u^{\infty}))(t,z+W_{t}^0-W_{s}^0)|\,dt
         \right]<\infty
  \end{split}     \end{equation}
and
\item \label{item:approximations.integrable.iiFC}
  for all $s\in[0,T]$ it holds that
  \begin{equation}  \begin{split}\label{eq:FeynmanKac}
      u^{\infty}(s,z)-\E\!\left[g(z+W_{T-s}^0)\right]
      &=\E\!\left[
      \int_s^{T}(\funcF(u^{\infty}))(t,z+W_{t-s}^0)\,dt
      \right].
    \end{split}     \end{equation}
\end{enumerate}
\end{lemma}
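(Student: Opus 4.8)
The plan is to prove the two assertions separately, establishing the integrability statement \eqref{item:approximations.integrable.iFC} first, since it will supply the dominating random variables needed to pass to a limit in the proof of \eqref{item:approximations.integrable.iiFC}.

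For \eqref{item:approximations.integrable.iFC} I would start from the observation that, by the independent and stationary increments of $W^0$, the process $(W_t^0-W_s^0)_{t\in[s,T]}$ has the same distribution as $(W_{t-s}^0)_{t\in[s,T]}$; hence it suffices to bound the corresponding expectation with $W_{t-s}^0$ in place of $W_t^0-W_s^0$. Applying the Lipschitz-type hypothesis \eqref{eq:fLipschitz} with $u_1=u^\infty$ and $u_2=0$ gives the pointwise estimate $|(\funcF(u^\infty))(t,y)|\le L\,|u^\infty(t,y)|+|(\funcF(0))(t,y)|$ for all $t\in[0,T]$, $y\in\R^d$, so that $\int_s^T|(\funcF(u^\infty))(t,z+W_{t-s}^0)|\,dt\le L(T-s)\sup_{t\in[s,T]}|u^\infty(t,z+W_{t-s}^0)|+\int_s^T|(\funcF(0))(t,z+W_{t-s}^0)|\,dt$. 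Taking expectations and invoking the hypothesis \eqref{eq:ldeltaintegrand3} then yields \eqref{item:approximations.integrable.iFC}.

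For \eqref{item:approximations.integrable.iiFC} the approach is the classical Feynman-Kac argument, but localized, since only $u^\infty\in C^{1,2}([0,T]\times\R^d,\R)$ and the weak moment bound \eqref{eq:ldeltaintegrand3} are available. Fixing $s\in[0,T]$, I would set $\tau_n=\inf\{t\in[s,T]\colon\|W_t^0-W_s^0\|\ge n\}\wedge T$ for $n\in\N$ and apply It\^o's formula to $[s,T]\ni t\mapsto u^\infty(t,z+W_t^0-W_s^0)$ on $[s,\tau_n]$. Replacing $\partial_t u^\infty+\tfrac12\Delta u^\infty$ by $-\funcF(u^\infty)$ via the PDE \eqref{eq:PDE} gives $u^\infty(\tau_n,z+W_{\tau_n}^0-W_s^0)=u^\infty(s,z)-\int_s^{\tau_n}(\funcF(u^\infty))(t,z+W_t^0-W_s^0)\,dt+\int_s^{\tau_n}\langle(\nabla u^\infty)(t,z+W_t^0-W_s^0),dW_t^0\rangle$. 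Since $\nabla u^\infty$ is continuous, it is bounded on the compact set $[s,T]\times\{y\in\R^d\colon\|y-z\|\le n\}$, so the stopped stochastic integral is a genuine martingale and taking expectations leaves $u^\infty(s,z)=\E[u^\infty(\tau_n,z+W_{\tau_n}^0-W_s^0)]+\E[\int_s^{\tau_n}(\funcF(u^\infty))(t,z+W_t^0-W_s^0)\,dt]$.

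It then remains to send $n\to\infty$. The continuity of the sample paths of $W^0$ forces $\tau_n=T$ for all large $n$ ($\P$-a.s.), so continuity of $u^\infty$ and the terminal condition $u^\infty(T,\cdot)=g$ give $u^\infty(\tau_n,z+W_{\tau_n}^0-W_s^0)\to g(z+W_T^0-W_s^0)$ and $\int_s^{\tau_n}(\funcF(u^\infty))(t,z+W_t^0-W_s^0)\,dt\to\int_s^T(\funcF(u^\infty))(t,z+W_t^0-W_s^0)\,dt$ $\P$-a.s.; these two families are dominated by $\sup_{t\in[s,T]}|u^\infty(t,z+W_t^0-W_s^0)|$ and by $\int_s^T|(\funcF(u^\infty))(t,z+W_t^0-W_s^0)|\,dt$, both integrable by \eqref{item:approximations.integrable.iFC}, so dominated convergence yields $u^\infty(s,z)-\E[g(z+W_T^0-W_s^0)]=\E[\int_s^T(\funcF(u^\infty))(t,z+W_t^0-W_s^0)\,dt]$. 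Rewriting the right-hand side by Fubini's theorem and the distributional identity $(W_t^0-W_s^0)_{t\in[s,T]}\overset{d}{=}(W_{t-s}^0)_{t\in[s,T]}$ (and similarly the term on the left) then gives \eqref{eq:FeynmanKac}. The main obstacle is precisely the pre-limit identity at the end of the third paragraph: one cannot argue globally that $t\mapsto\int_s^t\langle(\nabla u^\infty)(r,z+W_r^0-W_s^0),dW_r^0\rangle$ is a true martingale under the stated hypotheses, which is why the localization by $\tau_n$ is needed and why \eqref{item:approximations.integrable.iFC} must be in hand first to justify the dominated convergence as $n\to\infty$.
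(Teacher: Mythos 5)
Your proposal is correct and takes essentially the same route as the paper: item (i) via the pointwise Lipschitz bound $|(\funcF(u^{\infty}))(t,y)|\le L|u^{\infty}(t,y)|+|(\funcF(0))(t,y)|$ together with \eqref{eq:ldeltaintegrand3}, and item (ii) via It\^o's formula combined with the PDE \eqref{eq:PDE}, with (i) supplying exactly the integrability needed to annihilate the expectation of the stochastic integral. The only difference is presentational: you carry out the localization with stopping times and dominated convergence explicitly, whereas the paper reads off from the It\^o identity and (i) that the stochastic integral has integrable running supremum and hence zero expectation --- which is the same standard argument with the localization left implicit.
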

\begin{proof}
Note that \eqref{eq:fLipschitz} and \eqref{eq:ldeltaintegrand3} imply $(i)$.
 Next
 It\^o's formula and the PDE~\eqref{eq:PDE} ensure that
  for all $s\in[0,T]$, $t\in[s,T]$
  it holds $\P$-a.s.\ that
  \begin{align}
   \nonumber
   &
   u^{\infty}(t,z+W_t^0-W_s^0)-u^{\infty}(s,z)
   \\&=\int_s^t \left(
  \tfrac{\partial}{\partial r}u^{\infty}
+\tfrac{1}{2}\Delta_y u^{\infty}
  \right)\!(r,z+W_r^0-W_s^0)\,dr+\int_s^t\langle (\nabla_y u^{\infty})(r,z+W_r^0-W_s^0), \,dW_r^0\rangle 
  \label{eq:ItoFormula}
  \\&
   =-\int_s^t \left( \funcF(u^{\infty}) \right)\!(r,z+W_r^0-W_s^0)\,dr
   +\int_s^t\langle (\nabla_y u^{\infty})(r,z+W_r^0-W_s^0), \,dW_r^0\rangle .
   \nonumber
  \end{align}
 This and \eqref{eq:approximations.integrable.iFC} show that for all $s\in[0,T]$ it holds that
  $\E\big[\sup_{t\in[s,T]}\big|
   \int_s^t\langle (\nabla_y u^{\infty})(r,z+W_r^0-W_s^0), \,dW_r^0\rangle \big|\big]<\infty$.
  This ensures that
  $\E\big[
   \int_s^T\langle (\nabla_y u^{\infty})(t,z+W_t^0-W_s^0), \,dW_t^0\rangle \big]=0$.
  This
  and \eqref{eq:ItoFormula} prove 
  for all $s\in[0,T]$
  that
  \begin{equation}  \begin{split}
      u^{\infty}(s,z)-\E[g(z+W_{T-s}^0)]
      =
      u^{\infty}(s,z)-\E[u^{\infty}(T,z+W_{T}^0-W_s^0)]
      =\E\!\left[
      \int_s^{T}(\funcF(u^{\infty}))(t,z+W_{t-s}^0)\,dt
      \right].
  \end{split}     \end{equation}
  This finishes the proof of Lemma~\ref{l:nonlinear.FK.formula}.
\end{proof}
\begin{theorem}\label{thm:rate}
Assume the setting in Subsection~\ref{sec:setting.full.discretization},
let $M,Q\in \N$, $N\in \N \cap [1,2Q-1]$, $\theta \in\Theta$,
and
assume for all $z\in\R^d$, $s\in[0,T]$ that
\begin{equation}\label{eq:ldeltaintegrand}
  \E\!\left[ \sup_{t\in[s,T]}\left| u^{\infty}(t,z+W_{t-s}^0)\right| +\int_s^T\left|\big(\funcF(0)\big)(t,z+W_{t-s}^0)\right|\,dt
         \right]<\infty.
\end{equation}
 Then we have
 \begin{align}\label{eq:rate}
    &\left\|U_{N,M,Q}^{\theta}-u^{\infty}\right\|_{0,Q}
    \leq
    \left(1+2\LipConst\right)^{N-1}
    \bigg\{
     \LipConst \sup_{i\in\{1,2,\ldots,N\}}
      \tfrac{ \left\|u^{\infty}\right\|_{i,Q}}{\sqrt{M^{N-i}}}
   \\&
     +
     \Big[\sup_{i\in\{0,1,\ldots,N-1\}}
      \tfrac{ T^{i}}{i!\sqrt{M^{N-i}}}
      \Big]
     \left[
      \sup_{z\in\R^d}
     \sup_{s\in[0,T]}\left\|g(z+W_s^{0})\right\|_{L^2(\P;\R)}
     +T
      \sup_{z\in\R^d}
     \sup_{r,s\in[0,T]}\left\|(\funcF(0)) (r,z+W_s^0) \right\|_{L^2(\P;\R)}
     \right]
     \nonumber
   \\&
   +e^{T}
    \sup_{\substack{t\in[0,T],\\ r \in [0,t],\\ z\in\R^d}}
    \bigg\|
    \E\bigg[
    \sum\limits_{s\in[t,T]}q^{Q,[t,T]}(s)
    \left(\funcF(  u^{\infty})\right)\!(s,z+W_{r+s-t}^0)
      -
      \int_t^{T}
      \left(\funcF( u^{\infty})\right)\!(s,z+W_{r+s-t}^0)
      \,ds
    \, \Big | \, W_r^{0}
    \bigg]
      \bigg\|_{L^2(\P;\R)}
     \bigg\}.
     \nonumber
 \end{align}
\end{theorem}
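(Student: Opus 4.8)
The plan is to bound the global error $\|U_{N,M,Q}^{\theta}-u^{\infty}\|_{n,Q}$ by splitting it, via the triangle inequality (Lemma~\ref{l:seminorm}), into a Monte Carlo error $\|U_{N,M,Q}^{\theta}-\E[U_{N,M,Q}^{\theta}]\|_{n,Q}$ and a time-discretization error (bias) $\|\E[U_{N,M,Q}^{\theta}]-u^{\infty}\|_{n,Q}$, to combine the resulting estimates into a recursive inequality in the Picard index $N$ and the semi-norm index $n$, and to resolve this recursion by induction on $N$. Since the families $(U_{K,M,Q}^{\theta})_{K\in\N_{0}}$ are identically distributed in $\theta\in\Theta$ the final bound does not depend on $\theta$, and since $g=u^{\infty}(T,\cdot)$ the hypothesis~\eqref{eq:ldeltaintegrand} provides the integrability assumptions required by Lemma~\ref{l:approximations.integrable} and Lemma~\ref{l:nonlinear.FK.formula}.

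For the bias I would subtract the nonlinear Feynman-Kac formula~\eqref{eq:FeynmanKac} from the Feynman-Kac-type representation~\eqref{eq:discreteFeynmanKac} of the approximations and add and subtract $\sum_{t}q^{Q,[s,T]}(t)\,(\funcF(u^{\infty}))(t,z+W_{t-s}^{0})$, thereby expressing $\E[U_{N,M,Q}^{\theta}(s,z)]-u^{\infty}(s,z)$ as the sum of $\E\big[\sum_{t}q^{Q,[s,T]}(t)\,(\funcF(U_{N-1,M,Q}^{\theta})-\funcF(u^{\infty}))(t,z+W_{t-s}^{0})\big]$ and a deterministic Gau\ss-Legendre quadrature-error function $B$. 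Using the independence of $\funcF(U_{N-1,M,Q}^{\theta})$ from $W^{0}$ to bring the Brownian increments into the form required by Lemma~\ref{l:NormOfInt}, I would estimate the first term by Lemma~\ref{l:NormMonotonicity} (to discard the outer expectation), then Lemma~\ref{l:NormOfInt} (to pass from the quadrature sum over $[s,T]$ to the semi-norm of index $n+1$), then the Lipschitz estimate Lemma~\ref{l:FcircNorm2}, obtaining the contribution $\LipConst\|U_{N-1,M,Q}^{\theta}-u^{\infty}\|_{n+1,Q}$. For $\|B\|_{n,Q}$ I would use that $B$ is deterministic together with the independence of Brownian increments to identify $B(s,z+W_{u}^{0})$ with the conditional expectation in the last line of~\eqref{eq:rate}; combined with $\|1\|_{n,Q}=\tfrac{T^{n}}{n!}$ (Lemma~\ref{l:NormOfConstants}, valid because $n\le N\le 2Q-1$) this yields $\|B\|_{n,Q}\le\tfrac{T^{n}}{n!}$ times the supremum in the last line of~\eqref{eq:rate}.

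For the Monte Carlo error I would exploit the structure of~\eqref{eq:def:U}: the difference $U_{N,M,Q}^{\theta}-\E[U_{N,M,Q}^{\theta}]$ is the sum of the centred block built from the $M^{N}$ copies of $g(\cdot+W_{T}^{(\theta,0,-i)}-W_{\cdot}^{(\theta,0,-i)})$ and, for $l\in\{0,\dots,N-1\}$, of the centred blocks built from the $M^{N-l}$ copies of the summands involving $\funcF(U_{l,M,Q})$ and $\1_{\N}(l)\,\funcF(U_{l-1,M,Q})$; because the tuples appended to $\theta$ are pairwise distinct and never equal to $0$, within each block the random functions are independent and identically distributed and independent of $W^{0}$, so Lemma~\ref{l:BDG} applies block by block and produces the square-root rate together with the factors $M^{-N/2}$ and $M^{-(N-l)/2}$. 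Estimating the semi-norm of one representative of each block by Lemma~\ref{l:NormOfInt} (applied with the driving Brownian motion of the block in place of $W^{0}$) and Lemma~\ref{l:FcircNorm2}, and using $U_{0,M,Q}^{\theta}=0$ together with Lemma~\ref{l:NormOfConstants} for the $g$-block and the $l=0$-block, bounds the Monte Carlo error by
\[
  \tfrac{T^{n}}{n!\sqrt{M^{N}}}\Big[\sup_{z,s}\|g(z+W_{s}^{0})\|_{L^{2}(\P;\R)}+T\sup_{z,r,s}\|(\funcF(0))(r,z+W_{s}^{0})\|_{L^{2}(\P;\R)}\Big]+\LipConst\sum_{l=1}^{N-1}\tfrac{\|U_{l,M,Q}^{\theta}-u^{\infty}\|_{n+1,Q}+\|U_{l-1,M,Q}^{\theta}-u^{\infty}\|_{n+1,Q}}{\sqrt{M^{N-l}}}.
\]

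Writing $\epsilon_{K}^{(n)}:=\|U_{K,M,Q}^{\theta}-u^{\infty}\|_{n,Q}$ and combining the two estimates (collecting the $l=N-1$ Monte Carlo term with the bias term and using $M\ge1$), one obtains a recursive inequality of the form $\epsilon_{N}^{(n)}\le P_{N}^{(n)}+2\LipConst\sum_{k=0}^{N-1}M^{-k/2}\,\epsilon_{N-1-k}^{(n+1)}$, where $P_{N}^{(n)}$ collects the three ``pure'' contributions, each carrying the factorial weight $\tfrac{T^{n}}{n!}$, and where $\epsilon_{0}^{(m)}=\|u^{\infty}\|_{m,Q}$. I would solve this by induction on $N$: unrolling the recursion along chains of strictly decreasing Picard indices, the number of chains of length $n$ is a binomial coefficient, so the $2\LipConst$-factors sum via $\sum_{n}\binom{N-1}{n}(2\LipConst)^{n}=(1+2\LipConst)^{N-1}$; the factorial weights accumulate to at most $\sum_{n\ge0}\tfrac{T^{n}}{n!}=e^{T}$; and the geometric $M$-weights telescope with the $M^{-(N-l)/2}$'s --- with $M\ge1$ used to absorb an off-by-one in the $M$-power --- so that the boundary terms $\|u^{\infty}\|_{i,Q}$ appear precisely with weight $M^{-(N-i)/2}$ for $i\in\{1,\dots,N\}$ and the pure $g$- and $\funcF(0)$-terms with weights $\tfrac{T^{i}}{i!}M^{-(N-i)/2}$, while the restriction $n\le N\le 2Q-1$ keeps every invocation of Lemma~\ref{l:NormOfConstants} legitimate; this reproduces~\eqref{eq:rate}. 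The main obstacle I anticipate is exactly this bookkeeping step --- organising the double recursion in $(N,n)$ so that the accumulating constants collapse to \emph{exactly} $(1+2\LipConst)^{N-1}$, $e^{T}$, and the three clean suprema of~\eqref{eq:rate} rather than to merely comparable quantities; a secondary and more routine point is to make the Monte Carlo block decomposition fully rigorous, i.e.\ to verify the independence and identical-distribution statements for the families indexed by the extended $\theta$'s and to justify the use of Lemma~\ref{l:NormOfInt} for blocks driven by Brownian motions other than $W^{0}$.
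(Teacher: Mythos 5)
Your error decomposition and the per-term estimates coincide with the paper's: the same split into Monte Carlo error and time-discretization bias, the same use of Lemma~\ref{l:approximations.integrable} and Lemma~\ref{l:nonlinear.FK.formula} for the bias, and of Lemmas~\ref{l:BDG}, \ref{l:NormMonotonicity}, \ref{l:NormOfInt}, \ref{l:FcircNorm2}, and \ref{l:NormOfConstants} for the Monte Carlo part, leading to the same two-index recursive inequality as the paper's \eqref{eq:global.estimate}. Where you diverge is in resolving that recursion. You propose to unroll it directly, counting chains of strictly decreasing Picard indices and summing binomial coefficients; this can be made to work (the $M$-weights do compose along a chain so that a pure term reached at semi-norm depth $j$ carries exactly $M^{-(N-j)/2}$), but as sketched it is lossy: by absorbing the boundary data $\epsilon_{0}^{(m)}=\|u^{\infty}\|_{m,Q}$ into the uniform $2\LipConst$-sum you would end up with $2\LipConst$ in place of $\LipConst$ in front of $\sup_{i}\|u^{\infty}\|_{i,Q}M^{-(N-i)/2}$, i.e.\ a comparable but not identical bound to \eqref{eq:rate} --- precisely the bookkeeping risk you flag yourself (in the exact recursion the level-$0$ term appears only once, with factor $\LipConst$). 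The paper avoids the chain-counting altogether by introducing the weighted quantities $e_{n}=\sup\{M^{-j/2}\|U^{0}_{n,M,Q}-u^{\infty}\|_{k,Q}\colon k,j\in\N_0,\ k+j+n=N\}$, which fold the semi-norm index and the $M$-weight into a single scalar sequence; the recursion then becomes $e_{n}\le A+2\LipConst\sum_{l=1}^{n-1}e_{l}$ with an $n$-independent inhomogeneity $A$ (in which $e^{T}$ arises simply from $T^{k}/k!\le e^{T}$ applied to the quadrature term, and the $u^{\infty}$ term enters with factor $\LipConst$ only), and the discrete Gronwall-type inequality of Agarwal yields the factor $(1+2\LipConst)^{N-1}$ and the exact constants of \eqref{eq:rate} in one stroke. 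So your route is viable, but the paper's weighted-supremum device is exactly the tool that disposes of the obstacle you anticipate, and to claim \eqref{eq:rate} verbatim you would either need to keep track of the level-$0$ coefficient in the unrolling or adopt that device.
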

\begin{proof}
Throughout this proof assume w.l.o.g.\ that the right-hand side of~\eqref{eq:rate} is finite, assume w.l.o.g.\ that $\theta=0$ (the case $\theta\neq 0$ follows from the case $\theta=0$),
  let $\eps\in [0,\infty)$ be the real number given
  by
  \begin{align}
  \nonumber
    \eps &=
    \sup_{\substack{t\in[0,T],\\ z\in\R^d}}\sup_{u\in[0,t]}
    \bigg\|
    \E\bigg[
    \sum_{s\in[t,T]}q^{Q,[t,T]}(s)
    \left(\funcF(  u^{\infty})\right)\!(s,z+W_{u+s-t}^0)
      -
      \int_t^{T}
      (\funcF( u^{\infty}))(s,z+W_{u+s-t}^0)
      \,ds
    \,\Big |\,W_u^{0}
    \bigg]
      \bigg\|_{L^2(\P;\R)},
  \end{align}
  and
  let 
  $
    (e_n)_{n\in\{0,1,\ldots,N\}}\subseteq [0,\infty]
  $
  be the extended real numbers which satisfy for all  $n\in\{0,1,\ldots,N\}$
  that
  \begin{equation}
    e_n
    =
    \sup\!\left\{
    \sqrt{M^{-j}}
    \left\|
      U_{n,M,Q}^{0} - u^{ \infty } 
    \right\|_{  k,Q }
    \colon
    k, j \in \N_0 ,
    k + j + n = N 
    \right\}.
  \end{equation}
 First, we analyze the \emph{Monte Carlo error}.
Item~\eqref{item:approximations.integrable.i} of Lemma~\ref{l:approximations.integrable} shows
for all $n\in\N_0$, $(t,z)\in[0,T]\times\R^d$, $s\in[0,t]$
that $\E[|U_{n,M,Q}^0(t,z+W^0_s)|]<\infty$.
 The triangle inequality,
 independence,
 Lemma~\ref{l:BDG},
 Lemma~\ref{l:NormMonotonicity},
 Lemma~\ref{l:NormOfInt},
 Lemma~\ref{l:NormOfConstants},
 and
 Lemma~\ref{l:FcircNorm2}
 imply that
 for all $n\in \N$, $k\in \N_0$ it holds that
  \begin{equation}  \begin{split}\label{eq:MC.error}
   & \left\|U_{n,M,Q}^{0}-\E\!\left[U_{n,M,Q}^{0}\right]\right\|_{k,Q}
   \\&
   \leq 
   \left\|[0,T]\times\R^d\ni (t,z)\mapsto M^{-n}\sum_{i=1}^{M^n}
   \left(g(z+W_T^{(\theta,0,-i)}-W_t^{(\theta,0,-i)})-\E\big[g(z+W_T^{(\theta,0,-i)}-W_t^{(\theta,0,-i)})\big]\right)\in\R\right\|_{k,Q}
   \\&\quad
   +\sum_{l=0}^{n-1}
   \bigg\|[0,T]\times\R^d\ni (t,z)\mapsto 
   \\&\quad\qquad
     M^{l-n}\sum_{i=1}^{M^{n-l}}\sum_{r\in[t,T]} q^{Q,[t,T]}(r)
     \big(\funcF(U_{l,M,Q}^{(\theta,l,i,r)})-\1_{\N}(l)\funcF( U_{l-1,M,Q}^{(\theta,-l,i,r)})\big)
  (r,z+W_{r}^{(\theta,l,i)}-W_t^{(\theta,l,i)})
   \\&\quad\qquad
  -
     M^{l-n}\sum_{i=1}^{M^{n-l}}\sum_{r\in[t,T]} q^{Q,[t,T]}(r)
     \E\left[
     \big(\funcF(U_{l,M,Q}^{(\theta,l,i,r)})-\1_{\N}(l)\funcF( U_{l-1,M,Q}^{(\theta,-l,i,r)})\big)
  (r,z+W_{r}^{(\theta,l,i)}-W_t^{(\theta,l,i)})
  \right]
   \in\R\bigg\|_{k,Q}
   \\&
   \leq \tfrac{1}{\sqrt{M^n}}
   \left\|[0,T]\times\R^d\ni (t,z)\mapsto g(z+W_T^0-W_t^0)\in\R\right\|_{k,Q}
   \\&\quad
   +\sum_{l=0}^{n-1}\tfrac{1}{\sqrt{M^{n-l}}}
   \bigg\|[0,T]\times\R^d\ni (t,z)\mapsto 
     \sum_{r\in[t,T]} q^{Q,[t,T]}(r)
   \\&\qquad\qquad\cdot
     \big(\funcF(U_{l,M,Q}^{(\theta,1,1,r)})-\1_{\N}(l)\funcF( U_{l-1,M,Q}^{(\theta,-1,1,r)})\big)
  (r,z+W_{r}^{0}-W_t^{0})
   \in\R\bigg\|_{k,Q}
   \\&
   \leq \tfrac{1}{\sqrt{M^n}}
   \sup_{z\in\R^d}\sup_{s\in[0,T]}\left\|g(z+W_s^0)\right\|_{L^2(\P;\R)}\|1\|_{k,Q}
   +\sum_{l=0}^{n-1}\tfrac{1}{\sqrt{M^{n-l}}}
   \left\|
     \funcF(U_{l,M,Q}^{(\theta,1,1,0)})-\1_{\N}(l)\funcF( U_{l-1,M,Q}^{(\theta,-1,1,0)})
   \right\|_{k+1,Q}
   \\&
   \leq \tfrac{1}{\sqrt{M^n}}
   \sup_{z\in\R^d}\sup_{s\in[0,T]}\left\|g(z+W_s^0)\right\|_{L^2(\P;\R)}\tfrac{T^k}{k!}
   +\tfrac{1}{\sqrt{M^n}}
   \left\|
     \funcF(0)
   \right\|_{k+1,Q}
   +L\sum_{l=1}^{n-1}\tfrac{1}{\sqrt{M^{n-l}}}
   \left\|
     U_{l,M,Q}^{(\theta,1,1,0)}- U_{l-1,M,Q}^{(\theta,-1,1,0)}
   \right\|_{k+1,Q}
   \\&
   \leq \tfrac{1}{\sqrt{M^n}}
   \sup_{z\in\R^d}\sup_{s\in[0,T]}\left\|g(z+W_s^0)\right\|_{L^2(\P;\R)}\tfrac{T^k}{k!}
   +\tfrac{1}{\sqrt{M^n}}
   \sup_{z\in\R^d}\sup_{r,s\in[0,T]}\left\|(\funcF(0)) (r,z+W_s^0) \right\|_{L^2(\P;\R)}
   \tfrac{T^{k+1}}{(k+1)!}
   \\&\quad
   +L\sum_{l=0}^{n-1}
     \left(
       \tfrac{\1_{(0,n)}(l)}{\sqrt{M^{n-l}}}
       +
       \tfrac{\1_{(-\infty,n-1)}(l)}{\sqrt{M^{n-l-1}}}
     \right)
   \left\|
     U_{l,M,Q}^{0}-u^{\infty}
   \right\|_{k+1,Q}.
  \end{split}     \end{equation}
  Next we analyze the \emph{time discretization error}.
Item~\eqref{item:approximations.integrable.ii} of Lemma~\ref{l:approximations.integrable}
and
Item~\eqref{item:approximations.integrable.ii} of Lemma~\ref{l:nonlinear.FK.formula}
ensure
that
  for all $n\in\N$, $s\in[0,T]$, $z\in\R^d$
   it holds $\P$-a.s.\ that
  \begin{equation}  \begin{split}\label{eq:discreteFeynmanKac2}
    \E\!\left[U_{n,M,Q}^{0}(s,z))\right]-u^{\infty}(s,z)
    =
    \E\!\left[\sum_{t\in[s,T]}q^{Q,[s,T]}(t)
    \big(\funcF( U_{n-1,M,Q}^{0})\big)(t,z+W_{t-s}^0)
    -
      \int_s^{T}(\funcF(u^{\infty}))(t,z+W_{t-s}^0)\,dt
    \right].
  \end{split}     \end{equation}
  This,
  the triangle inequality, 
  Lemma~\ref{l:NormMonotonicity},
  Lemma~\ref{l:NormOfInt}, 
  Lemma~\ref{l:FcircNorm2},
  and
  Lemma~\ref{l:NormOfConstants}
 demonstrate for all $n\in\N$, $k\in\N_0\cap[0,2Q-1]$ that
  \begin{align}\label{eq:final.Timeerror}
    &\left\|\E\!\left[U_{n,M,Q}^{0}\right]-u^{\infty}\right\|_{k,Q}
    \\&\leq
    \left\|
    [0,T]\times\R^d\ni(s,z)\mapsto
    \E\!\left[\sum_{t\in[s,T]}q^{Q,[s,T]}(t)
    \big(\funcF( U_{n-1,M,Q}^{0})-\funcF( u^{\infty})\big)(t,z+W_{t-s}^0)
    \right]
    \right\|_{k,Q}
    \nonumber
    \\&
    \nonumber
    +\left\|
      [0,T]\times\R^d\ni(s,z)\mapsto
       \E\!\left[\sum_{t\in[s,T]}q^{Q,[s,T]}(t)
      \big(\funcF( u^{\infty})\big)(t,z+W_{t-s}^0)
      -
      \int_s^{T}(\funcF(u^{\infty}))(t,z+W_{t-s}^0)\,dt
    \right]
    \right\|_{k,Q}
    \\&\leq
     \left\|
    [0,T]\times\R^d\ni(s,z)\mapsto
    \sum_{t\in[s,T]}q^{Q,[s,T]}(t)
    \big(\funcF( U_{n-1,M,Q}^{0})-\funcF( u^{\infty})\big)(t,z+W_{t-s}^0)
    \right\|_{k,Q}+\eps\|1\|_{k,Q}
    \\
    &
    \leq
    \left\|
    \funcF( U_{n-1,M,Q}^{0})-\funcF( u^{\infty})
    \right\|_{k+1,Q}
    \nonumber
    +\eps\|1\|_{k,Q}
    \\&\leq
    \nonumber
    L\left\|
     U_{n-1,M,Q}^{0}- u^{\infty}
    \right\|_{k+1,Q}
    \nonumber
    +\eps\tfrac{T^k}{k!}.
  \end{align}
  In the next step we combine the established bounds for the Monte Carlo error and the time discretization error
  to obtain a bound for the
  \emph{global error}. More formally, observe that 
  \eqref{eq:MC.error} and \eqref{eq:final.Timeerror} ensure that 
  for all $n\in\N$, $k\in\N_0\cap [0,2Q-1]$ it holds 
  that
  \begin{equation}  \begin{split}\label{eq:global.estimate}
    &\left\|U_{n,M,Q}^{0}-u^{\infty}\right\|_{k,Q}\leq \left\|U_{n,M,Q}^{0}-\E\!\left[U_{n,M,Q}^{0}\right]\right\|_{k,Q}
    +\left\|\E\!\left[U_{n,M,Q}^{0}\right]-u^{\infty}\right\|_{k,Q}
  \\&\leq
     \tfrac{1}{\sqrt{M^n}}
    \tfrac{ T^{k}}{k!}
    \Big[
    \sup_{z\in\R^d}
     \sup_{s\in[0,T]}\left\|g(z+W_s^{0})\right\|_{L^2(\P;\R)}
     +T
    \sup_{z\in\R^d}
     \sup_{r,u\in[0,T]}\left\|(\funcF(0)) (r,z+W_u^0) \right\|_{L^2(\P;\R)}
     \Big]
    \\&\qquad
     +\LipConst\sum_{l=0}^{n-1}
     \left(
       \tfrac{\1_{(0,n)}(l)}{\sqrt{M^{n-l-1}}}
       +
       \tfrac{\1_{(-\infty,n-1)}(l)}{\sqrt{M^{n-l-1}}}
     \right)
    \left\|U_{l,M,Q}^{0}-u^{\infty}\right\|_{k+1,Q}
  +
  \LipConst
    \left\|
    U_{n-1,M,Q}^{0}- u^{\infty}
      \right\|_{k+1,Q}
      +\eps \tfrac{T^{k}}{k!}
  \\&=
     \tfrac{1}{\sqrt{M^n}}
    \tfrac{ T^{k}}{k!}
    \Big[
    \sup_{z\in\R^d}
     \sup_{s\in[0,T]}\left\|g(z+W_s^{0})\right\|_{L^2(\P;\R)}
     +T
    \sup_{z\in\R^d}
     \sup_{r,u\in[0,T]}\left\|(\funcF(0)) (r,z+W_u^0) \right\|_{L^2(\P;\R)}
     \Big]
     \\&\qquad
     +\LipConst \tfrac{ \left\|u^{\infty}\right\|_{k+1,Q}}{\sqrt{M^{n-1}}}
     +2\LipConst\sum_{l=1}^{n-1}
       \tfrac{1}{\sqrt{M^{n-l-1}}}
    \left\|U_{l,M,Q}^{0}-u^{\infty}\right\|_{k+1,Q}
      +\eps \tfrac{T^{k}}{k!}.
  \end{split}     \end{equation}
 Hence, we obtain that
  for all $j\in \N_0$, $n\in \N$, $k\in \N_0\cap [0,2Q-1]$ it holds that
  \begin{equation}  \begin{split}
    &\sqrt{M^{-j}}
    \left\|U_{n,M,Q}^{0}-u^{\infty}\right\|_{k,Q}
  \leq
     \tfrac{\LipConst  \left\|u^{\infty}\right\|_{k+1,Q}}{\sqrt{M^{n+j-1}}}
      +\eps \tfrac{T^{k}}{k!\sqrt{M^{j}}}
     +2\LipConst\sum_{l=1}^{n-1}
       \sqrt{M^{-j-n+l+1}}
    \left\|U_{l,M,Q}^{0}-u^{\infty}\right\|_{k+1,Q}
    \\&\quad
    +\tfrac{ T^{k}}{k!\sqrt{M^{j+n}}}
     \Big[
    \sup_{z\in\R^d}
     \sup_{s\in[0,T]}\left\|g(z+W_s^{0})\right\|_{L^2(\P;\R)}
     +T
    \sup_{z\in\R^d}
     \sup_{r,u\in[0,T]}\left\|(\funcF(0)) (r,z+W_u^0) \right\|_{L^2(\P;\R)}
     \Big].
 \end{split}     \end{equation}
 This shows for
 all $n\in\{1,2,\ldots,N\}$ that
 \begin{align}
   &e_n
    \leq
     \LipConst \sup_{k\in\{0,1,\ldots,N-1\}}
      \tfrac{ \left\|u^{\infty}\right\|_{k+1,Q}}{\sqrt{M^{N-k-1}}}
     +\eps e^{T}
     +2\LipConst
     \sum_{l=1}^{n-1}e_l
   \\&\quad
     +\Big[\sup_{i\in\{0,1,\ldots,N-1\}}
      \tfrac{ T^{i}}{i!\sqrt{M^{N-i}}}\Big]
     \Big[
    \sup_{z\in\R^d}
     \sup_{s\in[0,T]}\left\|g(z+W_s^{0})\right\|_{L^2(\P;\R)}
     +T\sup_{z\in\R^d}
    \sup_{r,u\in[0,T]}\left\|(\funcF(0)) (r,z+W_u^0) \right\|_{L^2(\P;\R)}
     \Big].
  \nonumber
 \end{align}
  Combining this with the discrete Gronwall-type inequality in Agarwal \cite[Corollary 4.1.2]{agarwal2000difference}  
  proves that
  \begin{equation}  \begin{split}
    &\left\|U_{N,M,Q}^{0}-u^{\infty}\right\|_{0,Q}
    =
    e_N
    \leq
    \left(1+2\LipConst\right)^{N-1}
    \bigg\{
     \LipConst \sup_{i\in\{1,2,\ldots,N\}}
      \tfrac{ \left\|u^{\infty}\right\|_{i,Q}}{\sqrt{M^{N-i}}}
     +\eps e^{T}
   \\&
     +\Big[\sup_{i\in\{0,1,\ldots,N-1\}}
      \tfrac{ T^{i}}{i!\sqrt{M^{N-i}}}\Big]
     \Big[
      \sup_{z\in\R^d}
     \sup_{s\in[0,T]}\left\|g(z+W_s^{0})\right\|_{L^2(\P;\R)}
     +T
      \sup_{z\in\R^d}
     \sup_{r,u\in[0,T]}\left\|(\funcF(0)) (r,z+W_u^0) \right\|_{L^2(\P;\R)}
     \Big]
     \bigg\}.
  \end{split}     \end{equation}
  This completes the proof of Theorem~\ref{thm:rate}.
\end{proof}
In the proof of the following result, Corollary~\ref{c:rate}, an upper bound 
for the quadrature error on the right-hand side of~\eqref{eq:rate}
is derived under the hypothesis that 
the solution of the PDE is sufficiently smooth and regular.
\begin{corollary}\label{c:rate}
 Assume the setting in Subsection~\ref{sec:setting.full.discretization},
 assume that $u^{\infty}\in C^{\infty}([0,T]\times\R^d,\R)$,
 assume for all $k\in\N_0$, $x\in\R^d$, $t\in[0,T]$ that
 \begin{equation}\label{eq:ldeltaintegrandk}
   \E\!\left[\sup_{s\in[t,T]}
   \left|\left((\tfrac{\partial}{\partial r}+\tfrac{1}{2}\Delta_y)^k u^{\infty}\right)\!(s,x+W_{s-t}^0)\right|
 \right]<\infty,
 \end{equation}
 and let $M,Q\in \N$, $N\in \N \cap [1,2Q)$.
 Then it holds for all $\theta\in\Theta$ that
 \begin{equation}  \begin{split}\label{eq:c.rate}
    &\left\|U_{N,M,Q}^{\theta}-u^{\infty}\right\|_{0,Q}
    \leq
    \left(1+2\LipConst\right)^{N-1}
    \Bigg\{
     \LipConst\sup_{i\in\{1,2,\ldots,N\}}
     \tfrac{\|u^{\infty}\|_{i,Q}}{\sqrt{M^{N-i}}}
   \\&
     +\left[\sup_{i\in\{0,1,\ldots,N-1\}}
      \tfrac{ T^{i}}{i!\sqrt{M^{N-i}}}\right]
      \left[
      \sup_{z\in\R^d}
     \sup_{s\in[0,T]}\left\|g(z+W_s^{0})\right\|_{L^2(\P;\R)}
     +T
      \sup_{z\in\R^d}
     \sup_{r,u\in[0,T]}\Big\|(\funcF(0)) (r,z+W_u^0) \Big\|_{L^2(\P;\R)}
     \right]
   \\&
   +e^T
    \sup_{t\in[0,T]}\Bigg[
    \sup_{u\in[0,t]}\sup_{z\in\R^d}
  \Bigg\|
   \sup_{s\in[t,T]}
    \left|
     \E\!\left[
    \left((\tfrac{\partial}{\partial r}+\tfrac{1}{2}\Delta_y)^{2Q+1}u^{\infty}\right)\!(s,x+W_{s-t}^0)
    \right]
    \right|
 \Big|_{x=z+W_u^0}
   \Bigg\|_{L^2(\P;\R)}
    \tfrac{[Q!]^4(T-t)^{2Q+1}}{(2Q+1)[(2Q)!]^3}
    \Bigg]
    \Bigg\}.
 \end{split}     \end{equation}
\end{corollary}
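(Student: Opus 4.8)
The plan is to feed the statement of Theorem~\ref{thm:rate} with the present hypotheses and then to replace the abstract quadrature error term (the last summand in~\eqref{eq:rate}) by the explicit bound coming from the classical error representation for Gau\ss--Legendre quadrature. First I would check that Theorem~\ref{thm:rate} is applicable: the integrability assumption~\eqref{eq:ldeltaintegrand} follows from~\eqref{eq:ldeltaintegrandk} with $k\in\{0,1\}$, from the Lipschitz bound~\eqref{eq:fLipschitz} (which gives $|(\funcF(0))(r,y)|\le\LipConst|u^{\infty}(r,y)|+|(\funcF(u^{\infty}))(r,y)|$), and from the PDE~\eqref{eq:PDE} (which gives $(\funcF(u^{\infty}))(r,y)=-((\tfrac{\partial}{\partial r}+\tfrac{1}{2}\Delta_y)u^{\infty})(r,y)$). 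Since $N$ is an integer, $\N\cap[1,2Q)=\N\cap[1,2Q-1]$, so Theorem~\ref{thm:rate} yields~\eqref{eq:rate}, and every summand there already coincides with the corresponding one in~\eqref{eq:c.rate} except the quadrature error term, which is $\eps\,e^T$ in the notation of the proof of Theorem~\ref{thm:rate}.

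Next I would estimate $\eps$. Fix $t\in[0,T]$, $u\in[0,t]$, $z\in\R^d$, and for $x\in\R^d$ introduce the deterministic function $\phi_x\colon[t,T]\to\R$ defined by $\phi_x(s)=\E[(\funcF(u^{\infty}))(s,x+W^0_{s-t})]$. Writing $W^0_{u+s-t}=W^0_u+(W^0_{u+s-t}-W^0_u)$ with the increment independent of $W^0_u$ and distributed like $W^0_{s-t}$, and using Fubini (legitimate by~\eqref{eq:ldeltaintegrand}), the random variable inside the $L^2(\P;\R)$-norm in~\eqref{eq:rate} equals, conditionally on $W^0_u$, the number $\sum_{s\in[t,T]}q^{Q,[t,T]}(s)\,\phi_x(s)-\int_t^T\phi_x(s)\,ds$ evaluated at $x=z+W^0_u$. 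Because $q^{Q,[t,T]}$ is supported on the $Q$ Gau\ss--Legendre nodes of $[t,T]$, this is precisely the error of the $Q$-point Gau\ss--Legendre rule applied to $\phi_x$, and the classical error representation gives, for each $x$, some $\xi_x\in[t,T]$ with absolute value equal to $\tfrac{[Q!]^4(T-t)^{2Q+1}}{(2Q+1)[(2Q)!]^3}\,|\phi_x^{(2Q)}(\xi_x)|$, provided $\phi_x\in C^{2Q}([t,T])$.

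It then remains to identify $\phi_x^{(2Q)}$. Here I would differentiate under the expectation sign — justified by $u^{\infty}\in C^{\infty}$ together with the moment bounds~\eqref{eq:ldeltaintegrandk} up to $k=2Q+1$ and dominated convergence — via the It\^o-formula identity $\tfrac{d}{ds}\E[w(s,x+W^0_{s-t})]=\E[((\tfrac{\partial}{\partial r}+\tfrac{1}{2}\Delta_y)w)(s,x+W^0_{s-t})]$, the local-martingale term having vanishing expectation. Iterating this $2Q$ times with $w=\funcF(u^{\infty})=-(\tfrac{\partial}{\partial r}+\tfrac{1}{2}\Delta_y)u^{\infty}$ yields $\phi_x^{(2Q)}(s)=-\E[((\tfrac{\partial}{\partial r}+\tfrac{1}{2}\Delta_y)^{2Q+1}u^{\infty})(s,x+W^0_{s-t})]$ and in particular $\phi_x\in C^{2Q}([t,T])$. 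Bounding $|\phi_x^{(2Q)}(\xi_x)|\le\sup_{s\in[t,T]}|\phi_x^{(2Q)}(s)|$, taking $L^2(\P;\R)$-norms at $x=z+W^0_u$, and then the suprema over $t$, $u$, $z$ transforms the quadrature error term of~\eqref{eq:rate} exactly into the last summand of~\eqref{eq:c.rate}; substituting this back into~\eqref{eq:rate} completes the proof.

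The main obstacle will be the two justifications I have only sketched: the conditioning/Fubini step that reduces the stochastic quadrature error in~\eqref{eq:rate} to the deterministic Gau\ss--Legendre error of $\phi_x$ evaluated at the random point $x=z+W^0_u$, and the rigorous differentiation under the expectation that produces $\phi_x^{(2Q)}$ and the regularity $\phi_x\in C^{2Q}([t,T])$. Both rely on carefully exploiting the integrability hypotheses~\eqref{eq:ldeltaintegrandk} (for all orders $k\le 2Q+1$) to dominate the relevant integrands; everything else is routine bookkeeping matching the terms with those produced by Theorem~\ref{thm:rate}.
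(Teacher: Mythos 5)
Your proposal is correct and follows essentially the same route as the paper's proof: invoke Theorem~\ref{thm:rate}, then control the quadrature term via the classical Gau\ss--Legendre error representation together with the iterated It\^o-formula identity $\tfrac{d}{ds}\E[w(s,x+W^0_{s-t})]=\E[((\tfrac{\partial}{\partial r}+\tfrac{1}{2}\Delta_y)w)(s,x+W^0_{s-t})]$, justified by \eqref{eq:ldeltaintegrandk}, the vanishing expectation of the stochastic integral, and dominated convergence/Fubini. The two technical points you flag as obstacles are exactly the steps the paper carries out (its displays \eqref{eq:cor.after.ito}--\eqref{eq:estimate.der} and \eqref{eq:estimate.quad.error}), so no essential idea is missing.
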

\begin{proof}
\sloppy{
Throughout this proof assume w.l.o.g.\ that
 $\sup_{z\in\R^d}\sup_{t,s\in[0,T]}
  \E\left[|g(z+W_{t}^0)|
         + |(\funcF(0))(t,z+W_{s}^0)|
         \right]<\infty$ (otherwise the right-hand side of \eqref{eq:c.rate} is infinite and the proof of \eqref{eq:c.rate} is clear).}
Observe that \eqref{eq:ldeltaintegrandk} and the dominated convergence theorem ensure that for every 
$k\in\N_0$, $x\in\R^d$, $t\in[0,T]$ it holds that
 the function
 \begin{equation}  \begin{split}\label{eq:integrandk}
   [t,T]\ni s\mapsto
 \E\!\left[\left((\tfrac{\partial}{\partial r}+\tfrac{1}{2}\Delta_y)^k u^{\infty}\right)\!(s,x+W_{s-t}^0)
 \right]\in\R
 \end{split}     \end{equation}
 is continuous.
  The assumption that $u^{\infty}\in C^{\infty}([0,T]\times\R^d,\R)$
  and
  It\^o's formula imply that 
  for all $x\in\R^d$,
  $t\in[0,T]$, $s\in[t,T]$, $k\in\N$ it holds $\P$-a.s.\ that
  \begin{align}\label{eq:cor.after.ito}
      &\left((\tfrac{\partial}{\partial r}
            +\tfrac{1}{2}\Delta_y
            )^k
      u^{\infty}\right)\!(s,x+W_{s}^{0}-W_t^0)
      -
      \left((\tfrac{\partial}{\partial r}
            +\tfrac{1}{2}\Delta_y
            )^k
       u^{\infty}\right)\!(t,x)
    \\&
    =\int_t^s 
    \left((\tfrac{\partial}{\partial r}+\tfrac{1}{2}\Delta_y)^{k+1} u^{\infty}\right)\!(v,x+W_{v}^0-W_t^0)\,dv
    +\int_t^s 
    \left\langle
    \left(\nabla_y(\tfrac{\partial}{\partial r}+\tfrac{1}{2}\Delta_y)^{k}u^{\infty}\right)\!(v,x+W_{v}^0-W_t^0),
    \,dW_v^0
    \right\rangle.\nonumber
  \end{align}
  \sloppy{This and \eqref{eq:ldeltaintegrandk} show that
 for all $x\in\R^d$, $t\in[0,T]$, $k\in\N$ it holds that
  $\E\big[\sup_{s\in[t,T]}\big|
    \int_t^s 
    \left\langle
    \left(\nabla_y(\tfrac{\partial}{\partial r}+\tfrac{1}{2}\Delta_y)^{k}u^{\infty}\right)(v,x+W_{v}^0-W_t^0),
    \,dW_v^0
    \right\rangle
   \big|\big]<\infty$.
 This implies that
 for all $x\in\R^d$, $t\in[0,T]$, $s\in[t,T]$, $k\in\N$ it holds that
  $\E\big[
    \int_t^s 
    \left\langle
    \left(\nabla_y(\tfrac{\partial}{\partial r}+\tfrac{1}{2}\Delta_y)^{k}u^{\infty}\right)(v,x+W_{v}^0-W_t^0),
    \,dW_v^0
    \right\rangle
   \big]=0$.}
   This, \eqref{eq:cor.after.ito}, and Fubini's theorem show 
   that
  for all $x\in\R^d$,
  $t\in[0,T]$, $s\in[t,T]$, $k\in\N$ it holds that
  \begin{equation}  \begin{split}\label{eq:exp.cor.after.ito}
    &
     \E\!\left[
      \left((\tfrac{\partial}{\partial r}
            +\tfrac{1}{2}\Delta_y
            )^k
      u^{\infty}\right)\!(s,x+W_{s}^{0}-W_t^0)
    \right]
      -
      \left((\tfrac{\partial}{\partial r}
            +\tfrac{1}{2}\Delta_y
            )^k
      u^{\infty}\right)\!(t,x)
    \\&
    =\int_t^s 
     \E\!\left[
    \left((\tfrac{\partial}{\partial r}+\tfrac{1}{2}\Delta_y)^{k+1}u^{\infty}\right)\!(v,x+W_{v}^0-W_t^0)
    \right]dv.
  \end{split}     \end{equation}
  Equation~\eqref{eq:exp.cor.after.ito} (with $k=1$)
  together with
  \eqref{eq:integrandk} (with $k=2$)
   implies 
  for every $x\in\R^d$, $t\in[0,T)$
  that the function
   $[t,T]\ni s\mapsto
   \E\!\left[\left((\tfrac{\partial}{\partial r}+\tfrac{1}{2}\Delta_y)u^{\infty}\right)(s,x+W_{s}^0-W_t^0)
 \right]\in\R$
 is continuously differentiable.
 Induction,~\eqref{eq:integrandk}, and
 \eqref{eq:exp.cor.after.ito}
  prove that
  for every $x\in\R^d$, $t\in[0,T]$
  it holds that the function 
  $[t,T]\ni s\mapsto
   \E\!\left[\left((\tfrac{\partial}{\partial r}+\tfrac{1}{2}\Delta_y)u^{\infty}\right)(s,x+W_{s}^0-W_t^0)
 \right]\in\R$ is infinitely often differentiable.
  This, 
 induction, and~\eqref{eq:exp.cor.after.ito}
 demonstrate that for all $k\in\N$, $x\in\R^d$, $t\in[0,T)$, $s\in[t,T]$
 it holds that
 \begin{equation}  \begin{split}\label{eq:estimate.der}
    &\tfrac{\partial^{k}}{\partial s^{k}}
      \E\!\left[
      \left((\tfrac{\partial}{\partial r}+\tfrac{1}{2}\Delta_y)u^{\infty}\right)\!(s,x+W_{s}^0-W_t^0)
      \right]
    =
     \E\!\left[
    \left((\tfrac{\partial}{\partial r}+\tfrac{1}{2}\Delta_y)^{k+1}u^{\infty}\right)\!(s,x+W_{s}^0-W_t^0)
    \right]
    .
 \end{split}     \end{equation}
 Equation~\eqref{eq:PDE}
 and
 the error representation for the Gau\ss-Legendre quadrature rule
 (see, e.g., \cite[Display (2.7.12)]{davis2007methods})
 imply
 for all $x\in\R^d$, $t\in[0,T)$
 that
 there exists a real number $\xi\in[t,T]$ such that
 \begin{align}
      &
    \sum_{s\in[t,T]}q^{Q,[t,T]}(s)
    \E\!\left[
       \left(\funcF( u^{\infty})\right)\!(s,x+W_{s}^0-W_t^0)
\right]
      -
      \int_t^{T}
      \E\!\left[
      (\funcF(u^{\infty}))(s,x+W_{s}^0-W_t^0)
 \right]
      \,ds
   \\&
   =
     \int_t^{T}
      \E\!\left[
    \left((\tfrac{\partial}{\partial r}+\tfrac{1}{2}\Delta_y)u^{\infty}\right)\!(s,x+W_{s}^0-W_t^0)
 \right]
      \,ds
      -
    \sum_{s\in[t,T]}q^{Q,[t,T]}(s)
    \E\!\left[
    \left((\tfrac{\partial}{\partial r}+\tfrac{1}{2}\Delta_y)u^{\infty}\right)\!(s,x+W_{s}^0-W_t^0)
\right]
    \nonumber
\\&
    =\left(\tfrac{\partial^{2Q}}{\partial s^{2Q}}
      \E\left[
    \left((\tfrac{\partial}{\partial r}+\tfrac{1}{2}\Delta_y)u^{\infty}\right)\!(s,x+W_{s}^0-W_t^0)
 \right]
    \right)\Big|_{s=\xi}
    \tfrac{[Q!]^4(T-t)^{2Q+1}}{(2Q+1)[(2Q)!]^3}.
    \nonumber
 \end{align}
 This and~\eqref{eq:estimate.der}
 prove
 that
 \begin{equation}  \begin{split}
    &
    \sup_{\substack{t\in[0,T],\\ z\in\R^d}}\sup_{u\in[0,t]}
    \bigg\|
    \E\bigg[
    \sum_{s\in[t,T]}q^{Q,[t,T]}(s)
    (\funcF( u^{\infty}))(s,z+W_{u+s-t}^0)
      -
      \int_t^{T}
      (\funcF(u^{\infty}))(s,z+W_{u+s-t}^0)
      \,ds
   \, \Big | \,W_u^{0}
    \bigg]
      \bigg\|_{L^2(\P;\R)}
  \\&
  \leq
    \sup_{t\in[0,T)}\sup_{u\in[0,t]}
    \sup_{z\in\R^d}
    \left\{
  \left\|
    \sup_{s\in[t,T]}\left(\tfrac{\partial^{2Q}}{\partial s^{2Q}}
      \E\!\left[
    \left((\tfrac{\partial}{\partial r}+\tfrac{1}{2}\Delta_y)u^{\infty}\right)\!(s,x+W_{s}^0-W_t^0)
 \right]
 \Big|_{x=z+W_u^0}
    \right)
   \right\|_{L^2(\P;\R)}
    \tfrac{[Q!]^4(T-t)^{2Q+1}}{(2Q+1)[(2Q)!]^3}
    \right\}
 \\&
 \leq
    \sup_{t\in[0,T]}\sup_{u\in[0,t]}
    \sup_{z\in\R^d}
    \left\{
  \left\|
   \sup_{s\in[t,T]}
    \left|
     \E\!\left[
    \left((\tfrac{\partial}{\partial r}+\tfrac{1}{2}\Delta_y)^{2Q+1}u^{\infty}\right)\!(s,x+W_{s}^0-W_t^0)
    \right]
    \right|
 \Big|_{x=z+W_u^0}
   \right\|_{L^2(\P;\R)}
    \tfrac{[Q!]^4(T-t)^{2Q+1}}{(2Q+1)[(2Q)!]^3}
    \right\}.
  \label{eq:estimate.quad.error}
 \end{split}     \end{equation}
 Theorem~\ref{thm:rate} together with~\eqref{eq:estimate.quad.error}
 implies~\eqref{eq:c.rate}.
  The proof of Corollary~\ref{c:rate} is thus completed.
\end{proof}

The following result, Corollary~\ref{c:rate2},
establishes an upper bound for the $L^2$-error between the solution of the PDE
and our approximations~\eqref{eq:def:U} if the $\sup$-norm of the $n$-th derivative of the solution of the PDE
grows sufficiently slowly as $\N\ni n\to\infty$.
\begin{corollary}\label{c:rate2}
 Assume the setting in Subsection~\ref{sec:setting.full.discretization}, 
 assume that $u^{\infty}\in C^{\infty}([0,T]\times\R^d,\R)$,
 let $\alpha \in [0,\nicefrac{1}{4}]$, and let $C\in [0,\infty]$ be the extended
 real number given by
\begin{equation}
\begin{split}
C&= 
    \LipConst\left[\sup_{(t,x)\in[0,T]\times\R^d}|u^{\infty}(t,x)|\right]
    +
     \left[\sup_{x\in\R^d}\left|g(x)\right|\right]
     +T\left[\sup_{(t,x)\in[0,T]\times\R^d}\left|(\funcF(0)) (t,x) \right|\right]
     \\&\quad +
    Te^{T}
   \left[\sup_{k\in\N}\sup_{(t,x)\in[0,T]\times\R^d}
   (k!)^{\alpha-1}
     \left|\left(\!(\tfrac{\partial}{\partial r}+\tfrac{1}{2}\Delta_y)^k
     u^{\infty}\right)\!(t,x)\right|\right].
    \end{split}
\end{equation}
 Then it holds for all $M,Q\in\N$, $N\in \N \cap [0,2Q)$ that
 \begin{equation} \label{eq:c.rate2}
    \sup_{(t,x)\in[0,T]\times\R^d}\left\|U_{N,M,Q}^{0}(t,x)-u^{\infty}(t,x)\right\|_{L^2(\P;\R)}
    \leq C (1+2L)^N\max\!\left\{\tfrac{T^{2Q}}{Q^{2\alpha Q}},\tfrac{\exp(T\sqrt{M})}{M^{N/2}}\right\} .   
 \end{equation}
\end{corollary}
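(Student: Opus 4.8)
The plan is to deduce~\eqref{eq:c.rate2} from Corollary~\ref{c:rate} by crudely majorizing each of the three terms on the right-hand side of~\eqref{eq:c.rate}. First I would dispose of the trivial case: if $C=\infty$ then the right-hand side of~\eqref{eq:c.rate2} is infinite and there is nothing to prove, so assume $C<\infty$. Since $\LipConst\in(0,\infty)$, finiteness of $C$ forces $g$, $\funcF(0)$, $u^{\infty}$, and all the functions $(\tfrac{\partial}{\partial r}+\tfrac12\Delta_y)^{k}u^{\infty}$, $k\in\N$, to be bounded (the latter because then the last supremum in the definition of $C$ is finite, i.e.\ $\sup_{(t,y)}|((\tfrac{\partial}{\partial r}+\tfrac12\Delta_y)^{k}u^{\infty})(t,y)|\le(k!)^{1-\alpha}b_0$ for $k\in\N$, where $b_0:=\sup_{k\in\N}\sup_{(t,y)}(k!)^{\alpha-1}|((\tfrac{\partial}{\partial r}+\tfrac12\Delta_y)^{k}u^{\infty})(t,y)|$); hence the integrability hypothesis~\eqref{eq:ldeltaintegrandk} of Corollary~\ref{c:rate} holds, and, since $u^{\infty}\in C^{\infty}([0,T]\times\R^d,\R)$ by assumption and $\N\cap[0,2Q)=\N\cap[1,2Q)$, Corollary~\ref{c:rate} is applicable. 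Moreover, since $\bar q^{0,Q}=\1_{\{0\}}$, the choice $u=0$ in the triple supremum in~\eqref{eq:seminorms} gives $\sup_{(t,x)\in[0,T]\times\R^d}\|U_{N,M,Q}^{0}(t,x)-u^{\infty}(t,x)\|_{L^2(\P;\R)}\le\|U_{N,M,Q}^{0}-u^{\infty}\|_{0,Q}$, so it suffices to bound the right-hand side of~\eqref{eq:c.rate}.

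For the first two terms in~\eqref{eq:c.rate} I would use that $u^{\infty}$, $g$, $\funcF(0)$ are deterministic and bounded, so that $\|g(z+W_s^{0})\|_{L^2(\P;\R)}\le\sup_{y\in\R^d}|g(y)|$, $\|(\funcF(0))(r,z+W_u^{0})\|_{L^2(\P;\R)}\le\sup_{(r',y')\in[0,T]\times\R^d}|(\funcF(0))(r',y')|$, and, by Lemma~\ref{l:NormOfConstants}, $\|u^{\infty}\|_{i,Q}\le[\sup_{(t,y)}|u^{\infty}(t,y)|]\,\|1\|_{i,Q}=[\sup_{(t,y)}|u^{\infty}(t,y)|]\,\tfrac{T^i}{i!}$ for $i\in\{0,1,\ldots,N\}\subseteq\N_0\cap[0,2Q-1]$. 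Together with the elementary bound $\tfrac{T^i}{i!\sqrt{M^{N-i}}}=M^{-N/2}\,\tfrac{(T\sqrt M)^i}{i!}\le M^{-N/2}e^{T\sqrt M}$, this shows that the sum of the first two terms in~\eqref{eq:c.rate} is at most $(1+2\LipConst)^{N-1}\,a\,\tfrac{\exp(T\sqrt M)}{M^{N/2}}$, where $a:=\LipConst[\sup_{(t,y)}|u^{\infty}(t,y)|]+\sup_{y}|g(y)|+T\sup_{(r,y)}|(\funcF(0))(r,y)|$.

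For the quadrature term I would bound the inner $L^2(\P;\R)$-norm in~\eqref{eq:c.rate} by the deterministic constant $\sup_{(s,y)}|((\tfrac{\partial}{\partial r}+\tfrac12\Delta_y)^{2Q+1}u^{\infty})(s,y)|\le b_0\,((2Q+1)!)^{1-\alpha}$ and use $\sup_{t\in[0,T]}(T-t)^{2Q+1}=T^{2Q+1}$. This reduces matters to the combinatorial claim
\[
 ((2Q+1)!)^{1-\alpha}\,\tfrac{[Q!]^4}{(2Q+1)[(2Q)!]^3}
 =\tfrac{1}{\binom{2Q}{Q}^{2}((2Q+1)!)^{\alpha}}
 \le\tfrac{1}{Q^{2\alpha Q}},
\]
where the equality uses $[Q!]^2/(2Q)!=\binom{2Q}{Q}^{-1}$ and the inequality follows from the elementary estimates $\binom{2Q}{Q}\ge 4^{Q}/(2\sqrt Q)$, $(2Q)!\ge Q^{2Q}e^{-Q}$, and $16^{Q}\ge 4Q\,e^{\alpha Q}$ (the last valid for all $Q\in\N$ since $\alpha\le\tfrac14$), which together yield $\binom{2Q}{Q}^{2}((2Q+1)!)^{\alpha}\ge\tfrac{16^{Q}}{4Q}\cdot\tfrac{Q^{2\alpha Q}}{e^{\alpha Q}}\ge Q^{2\alpha Q}$. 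Hence the third term in~\eqref{eq:c.rate} is at most $(1+2\LipConst)^{N-1}\,b\,\tfrac{T^{2Q}}{Q^{2\alpha Q}}$ with $b:=Te^{T}b_0$.

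Finally I would combine the two bounds. The key point is not to estimate $a$ and $b$ separately by $C$ but to keep each attached to its own factor; since $a,b\ge0$ and $a+b=C$,
\[
 \|U_{N,M,Q}^{0}-u^{\infty}\|_{0,Q}
 \le(1+2\LipConst)^{N-1}\Big(a\,\tfrac{\exp(T\sqrt M)}{M^{N/2}}+b\,\tfrac{T^{2Q}}{Q^{2\alpha Q}}\Big)
 \le(1+2\LipConst)^{N-1}(a+b)\max\!\Big\{\tfrac{T^{2Q}}{Q^{2\alpha Q}},\tfrac{\exp(T\sqrt M)}{M^{N/2}}\Big\},
\]
and $(1+2\LipConst)^{N-1}\le(1+2\LipConst)^{N}$ together with $a+b=C$ then gives~\eqref{eq:c.rate2}. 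The only genuinely non-routine step is the combinatorial inequality for $\binom{2Q}{Q}^{2}((2Q+1)!)^{\alpha}$; everything else is a chain of trivial majorizations, the one delicate point being the bookkeeping that keeps the $\exp(T\sqrt M)M^{-N/2}$ contribution separate from the quadrature contribution so as to end up with $C$ rather than $2C$ out front.
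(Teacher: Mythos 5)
Your proposal is correct and follows essentially the same route as the paper's proof: apply Corollary~\ref{c:rate}, bound $\|u^{\infty}\|_{i,Q}$ by $\big[\sup_{(t,x)}|u^{\infty}(t,x)|\big]\tfrac{T^i}{i!}$, use $\sup_i \tfrac{(T\sqrt{M})^i}{i!}\le e^{T\sqrt{M}}$, absorb the quadrature factor $\tfrac{[Q!]^4}{(2Q+1)[(2Q)!]^3}\,((2Q+1)!)^{1-\alpha}\le Q^{-2\alpha Q}$ by an elementary factorial estimate, and keep each constant attached to its own factor before passing to the maximum, exactly as the paper does. The only (cosmetic) difference is that the paper obtains the factorial inequality from Robbins' form of Stirling's formula, whereas you use central binomial coefficient bounds; both are valid.
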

\begin{proof}
To prove \eqref{eq:c.rate2} we assume w.l.o.g.\ that $C\in [0,\infty)$. 
Observe that the Stirling-type formula in Robbins \cite[Displays (1)--(2)]{robbins1955remark} proves for all
$n\in \N$ that
\begin{equation}
\sqrt{2\pi n}\left[\frac{n}{e}\right]^n\le n! \le \sqrt{2\pi n}\left[\frac{n}{e}\right]^ne^{\frac{1}{12}}
\end{equation}
This together with the fact that $\sqrt{e}\le 2$ and the fact that $\forall \, n\in \N\colon \pi e^{\frac{1}{3}}n\le 8^{n}$ shows for all $n\in \N$ that
\begin{equation}\label{eq:stirling}
\begin{split}
\tfrac{n^{2\alpha n}((2n+1)!)^{1-\alpha}[n!]^4}{(2n+1)[(2n)!]^3}
&\le \tfrac{n^{2\alpha n}[n!]^4}{[(2n)!]^{2+\alpha}}
\le \tfrac{n^{2\alpha n}\left[\sqrt{2\pi}n^{n+\frac{1}{2}}e^{-n+\frac{1}{12}}\right]^4}
{\left[\sqrt{2\pi}(2n)^{2n+\frac{1}{2}}e^{-2n}\right]^{2+\alpha}}
=(\sqrt{2\pi})^{2-\alpha}n^{1-\frac{\alpha}{2}}e^{\frac{1}{3}+2n\alpha}2^{-(2n+\frac{1}{2})(2+\alpha)}\\
&\le 2\pi n e^{\frac{1}{3}+\frac{n}{2}}2^{-4n-1}
=\pi e^{\frac{1}{3}}n(\sqrt{e})^n2^{-4n}
\le \pi e^{\frac{1}{3}}n2^{-3n}
\le 1.
\end{split}
\end{equation}
    Next note that Lemma~\ref{l:quad.rule.rn} and~\eqref{eq:seminorms}
  imply that for all $Q\in\N$ , $i\in\{0,1,\ldots,2Q-1\}$ it holds that
  \begin{equation}  \begin{split}\label{eq:norm.uinfty}
    \|u^{\infty}\|_{i,Q}\leq \left[\sup_{(t,x)\in[0,T]\times\R^d}|u^{\infty}(t,x)|\right]
    \left[\sum_{s\in[0,T]}\bar{q}^{i,Q}(s)\right]
    =
    \left[\sup_{(t,x)\in[0,T]\times\R^d}|u^{\infty}(t,x)|\right]
    \frac{T^{i}}{i!}.
  \end{split}     \end{equation}
The assumption that $C\in [0,\infty)$ allows us to apply Corollary~\ref{c:rate} to obtain
  for all $M,Q\in\N$, $N\in \N\cap [0,2Q)$ that
 \begin{equation}  \begin{split}
    &
    \sup_{t\in[0,T]}\sup_{z\in\R^d}\left\|U_{N,M,Q}^{0}(t,z)-u^{\infty}(t,z)\right\|_{L^2(\P;\R)}
    \\&
    \leq
    \sup_{t\in[0,T]}\sup_{z\in\R^d}\sup_{u\in[0,t]}
    \left\|U_{N,M,Q}^{0}(t,z+W_u^0)-u^{\infty}(t,z+W_u^0)\right\|_{L^2(\P;\R)}
    =\left\|U_{N,M,Q}^{0}-u^{\infty}\right\|_{0,Q}
    \\&
    \leq
    \left(1+2\LipConst\right)^{N-1}
    \Bigg\{
    \LipConst\sup_{(t,x)\in[0,T]\times\R^d}|u^{\infty}(t,x)|\sup_{i\in\{0,1,\ldots,N\}}
     \tfrac{T^{i}}{i!\sqrt{M^{N-i}}}
   \\&\quad
     +\sup_{i\in\{0,1,\ldots,N\}}
      \tfrac{ T^{i}}{i!\sqrt{M^{N-i}}}
      \sup_{z\in\R^d}
     \Big[
     \sup_{s\in[0,T]}\left\|g(z+W_s^{0})\right\|_{L^2(\P;\R)}
     +T\sup_{r,u\in[0,T]}\left\|(\funcF(0)) (r,z+W_u^0) \right\|_{L^2(\P;\R)}
     \Big]
   \\&\quad
   +e^{T}
    \sup_{t\in[0,T]}\sup_{u\in[0,t]}
    \sup_{z\in\R^d}
  \bigg\|
   \sup_{s\in[t,T]}
    \left|
     \E\!\left[
    \left((\tfrac{\partial}{\partial r}+\tfrac{1}{2}\Delta_y)^{2Q+1}u^{\infty}\right)\!(s,x+W_{s-t}^0)
    \right]
    \right|
 \Big|_{x=z+W_u^0}
   \bigg\|_{L^2(\P;\R)}
    \tfrac{[Q!]^4(T-t)^{2Q+1}}{(2Q+1)[(2Q)!]^3}
    \Bigg\}
    \\&
    \leq
    \left(1+2\LipConst\right)^{N}
    \Bigg\{
  e^{T}T^{2Q+1}
    \sup_{(t,x)\in[0,T]\times\R^d}
     \left|\left((\tfrac{\partial}{\partial r}+\tfrac{1}{2}\Delta_y)^{2Q+1}
     u^{\infty}\right)\!(t,x)\right|
    \tfrac{ [Q!]^4}{(2Q+1)[(2Q)!]^3}
   \\&\quad
   +
     \tfrac{1}{\sqrt{M^N}}
   \sup_{i\in\{0,1,\ldots,N\}}\left(\tfrac{(\sqrt{M}T)^{i}}{i!}\right)\left[
   \LipConst\sup_{(t,x)\in[0,T]\times\R^d}|u^{\infty}(t,x)|
   +\sup_{x\in\R^d}\left|g(x)\right|
     +T\sup_{(t,x)\in[0,T]\times\R^d}\left|(\funcF(0)) (t,x) \right|\right]
    \Bigg\}.
     \end{split}     \end{equation}
     This, \eqref{eq:stirling}, and the fact that 
     $\sup_{i\in\{0,1,\ldots,N\}} \tfrac{( \sqrt{M}T)^{i}}{i!}\leq e^{T\sqrt{M}}$
  imply for all $M,Q\in\N$, $N\in \N \cap[0,2Q)$ that
      \begin{equation}\label{eq:rate2.estimate}  \begin{split}
    &
    \sup_{(t,x)\in[0,T]\times \R^d}\left\|U_{N,M,Q}^{0}(t,x)-u^{\infty}(t,x)\right\|_{L^2(\P;\R)}
    \\&
  \leq
    \tfrac{\left(1+2\LipConst\right)^{N}}{Q^{2\alpha Q}}
   e^{T}T^{2Q+1}
   \left[\sup_{k\in\N}\sup_{(t,x)\in[0,T]\times\R^d}
   (k!)^{\alpha-1}
     \left|\left(\!(\tfrac{\partial}{\partial r}+\tfrac{1}{2}\Delta_y)^k
     u^{\infty}\right)\!(t,x)\right|\right]
     \left[
   \sup_{n\in\N}
    \tfrac{ n^{2\alpha n}((2n+1)!)^{1-\alpha}[n!]^4}{(2n+1)[(2n)!]^3}
    \right]
   \\&\quad
   +\left(\tfrac{1+2\LipConst}{\sqrt M}\right)^{N}
    e^{T\sqrt{M}}
    \left[
    \LipConst\sup_{(t,x)\in[0,T]\times\R^d}|u^{\infty}(t,x)|
     +
     \sup_{x\in\R^d}\left|g(x)\right|
     +T\sup_{(t,x)\in[0,T]\times\R^d}\left|(\funcF(0)) (t,x) \right|
    \right]\\&
    \leq
    \tfrac{\left(1+2\LipConst\right)^{N}}{Q^{2\alpha Q}}
   e^{T}T^{2Q+1}
   \left[\sup_{k\in\N}\sup_{(t,x)\in[0,T]\times\R^d}
   (k!)^{\alpha-1}
     \left|\left(\!(\tfrac{\partial}{\partial r}+\tfrac{1}{2}\Delta_y)^k
     u^{\infty}\right)\!(t,x)\right|\right]
   \\&\quad
   +\left(\tfrac{1+2\LipConst}{\sqrt M}\right)^{N}
    e^{T\sqrt{M}}
    \left[
    \LipConst\sup_{(t,x)\in[0,T]\times\R^d}|u^{\infty}(t,x)|
     +
     \sup_{x\in\R^d}\left|g(x)\right|
     +T\sup_{(t,x)\in[0,T]\times\R^d}\left|(\funcF(0)) (t,x) \right|
    \right].
 \end{split}     \end{equation}
This establishes~\eqref{eq:c.rate2}.
 The proof of Corollary~\ref{c:rate2} is thus completed.
\end{proof}
The next result, Corollary~\ref{c:rate3}, provides
an upper bound for the $L^2$-error 
between the solution of the PDE
and our approximations~\eqref{eq:def:U} if the parameters $N,M,Q\in \N$ satisfy $N=M=Q$.
Corollary~\ref{c:rate3} is a direct consequence of Corollary \ref{c:rate2}. 
\begin{corollary}\label{c:rate3}
 Assume the setting in Subsection~\ref{sec:setting.full.discretization}, 
 assume that $u^{\infty}\in C^{\infty}([0,T]\times\R^d,\R)$,
 let $\alpha \in [0,\nicefrac{1}{4}]$, and let $C\in [0,\infty]$ be the extended
 real number given by
\begin{equation}
\begin{split}
C&= 
    \LipConst\left[\sup_{(t,x)\in[0,T]\times\R^d}|u^{\infty}(t,x)|\right]
    +
     \left[\sup_{x\in\R^d}\left|g(x)\right|\right]
     +T\left[\sup_{(t,x)\in[0,T]\times\R^d}\left|(\funcF(0)) (t,x) \right|\right]
     \\&\quad +
    Te^{T}
   \left[\sup_{k\in\N}\sup_{(t,x)\in[0,T]\times\R^d}
   (k!)^{\alpha-1}
     \left|\left(\!(\tfrac{\partial}{\partial r}+\tfrac{1}{2}\Delta_y)^k
     u^{\infty}\right)\!(t,x)\right|\right].
    \end{split}
\end{equation}
Then it holds for all $N\in\N$ that
 \begin{equation} \label{eq:c.rate3}
    \sup_{(t,x)\in[0,T]\times\R^d}\left\|U_{N,N,N}^{0}(t,x)-u^{\infty}(t,x)\right\|_{L^2(\P;\R)}
    \leq C \left[\frac{(1+2L)e^T}{N^{2\alpha}}\right]^N.
     \end{equation}
\end{corollary}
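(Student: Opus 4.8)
The plan is to obtain Corollary~\ref{c:rate3} as a one-line specialization of Corollary~\ref{c:rate2} followed by two elementary scalar estimates. First I would observe that for every $N\in\N$ it holds that $N\in\N\cap[0,2N)$, so that Corollary~\ref{c:rate2} applies with the choice $M=Q=N$; this yields, for all $N\in\N$,
\begin{equation*}
\sup_{(t,x)\in[0,T]\times\R^d}\left\|U_{N,N,N}^{0}(t,x)-u^{\infty}(t,x)\right\|_{L^2(\P;\R)}
\leq C(1+2L)^N\max\!\left\{\tfrac{T^{2N}}{N^{2\alpha N}},\tfrac{\exp(T\sqrt{N})}{N^{N/2}}\right\}.
\end{equation*}
It therefore suffices to prove the scalar bound $\max\{T^{2N}N^{-2\alpha N},\exp(T\sqrt{N})N^{-N/2}\}\le e^{TN}N^{-2\alpha N}$ for every $N\in\N$, because multiplying by the nonnegative quantity $C(1+2L)^N$ then turns the right-hand side into $C\,[(1+2L)e^{T}/N^{2\alpha}]^{N}$, which is exactly the claimed bound~\eqref{eq:c.rate3}.

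For the first entry of the maximum I would use the elementary inequality $T^2\le e^T$, which holds because the map $[0,\infty)\ni T\mapsto e^T-T^2\in\R$ has value $1$ at $T=0$ and has the strictly positive derivative $T\mapsto e^T-2T$ (its minimum over $[0,\infty)$ is attained at $T=\ln 2$ and equals $2(1-\ln 2)>0$); raising $T^2\le e^T$ to the $N$-th power gives $T^{2N}\le e^{TN}$ and hence $T^{2N}N^{-2\alpha N}\le e^{TN}N^{-2\alpha N}$. For the second entry I would use that the hypothesis $\alpha\le\nicefrac{1}{4}$ forces $2\alpha N-\tfrac{N}{2}\le 0$, so that $N^{2\alpha N-N/2}\le 1\le e^{T(N-\sqrt N)}$ for every $N\in\N$ (here one uses $N\ge\sqrt N$ and $T>0$); rearranging this inequality gives $\exp(T\sqrt N)N^{-N/2}\le e^{TN}N^{-2\alpha N}$.

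Combining the two estimates with the bound coming from Corollary~\ref{c:rate2} then yields
\begin{equation*}
\sup_{(t,x)\in[0,T]\times\R^d}\left\|U_{N,N,N}^{0}(t,x)-u^{\infty}(t,x)\right\|_{L^2(\P;\R)}
\leq C(1+2L)^N\,\frac{e^{TN}}{N^{2\alpha N}}
= C\left[\frac{(1+2L)\,e^{T}}{N^{2\alpha}}\right]^{N},
\end{equation*}
which is precisely~\eqref{eq:c.rate3}. I do not expect any genuine obstacle here: the statement is essentially a repackaging of Corollary~\ref{c:rate2}, and the only points requiring (minimal) care are the two scalar facts $T^2\le e^T$ for $T\ge 0$ and $2\alpha\le\nicefrac{1}{2}$, both of which are immediate.
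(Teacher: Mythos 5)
Your proposal is correct and follows exactly the route the paper intends: the paper states Corollary~\ref{c:rate3} as a direct consequence of Corollary~\ref{c:rate2} (with the proof omitted), namely the specialization $M=Q=N$ followed by bounding the maximum by $e^{TN}N^{-2\alpha N}$. Your two scalar estimates, $T^2\le e^T$ for $T\ge 0$ and $N^{2\alpha N-N/2}\le 1\le e^{T(N-\sqrt N)}$ for $\alpha\le\nicefrac14$, are valid and fill in precisely the elementary details the authors left to the reader.
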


\subsection{Analysis of the computational complexity and overall rate of convergence}
\label{sec:comp_cost}

In Lemma~\ref{l:costRN}
$ \operatorname{RN}_{n,M,Q} $ 
is the number of realizations of a scalar standard 
normal random variable
required to compute one realization of the 
random variable 
$
  U_{n,M,Q}^{ \theta }( t, x ) \colon \Omega \to \R.
$
In  Lemma~\ref{l:costFE}
$ \operatorname{FE}_{n,M,Q} $
is the number of function evaluations of $ f $ and $ g $
required to compute one realization of
$
  U_{n,M,Q}^{ \theta }( t, x ) \colon \Omega \to \R
$.

\begin{lemma}\label{l:costRN}
 Assume the setting in Subsection~\ref{sec:setting.full.discretization}
 and
 let $(\operatorname{RN}_{n,M,Q})_{n,M,Q\in\Z}\subseteq\N_0$ be natural numbers which satisfy
for all $n,M,Q \in \N$
that $\operatorname{RN}_{0,M,Q}=0$
and 
\begin{align}
\label{eq:RN}
  \operatorname{RN}_{ n,M,Q }
  &\leq d M^n+\sum_{l=0}^{n-1}\left[Q M^{n-l}( d + \operatorname{RN}_{ l, M,Q }+ \mathbbm{1}_{ \N }( l ) \cdot\operatorname{RN}_{ l-1, M,Q })\right].
\end{align}
Then for all $N\in\N$, we have
$$
\operatorname{RN}_{ N,N,N }
\leq 8 d N^{2N}.
$$
\end{lemma}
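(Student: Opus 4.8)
The plan is to normalize the recursion \eqref{eq:RN} to a scalar linear recursion, solve that explicitly, and finish with an elementary estimate. The case $N=1$ is immediate: \eqref{eq:RN} with $n=M=Q=1$, together with $\operatorname{RN}_{0,1,1}=0$ and $\mathbbm{1}_{\N}(0)=0$, gives $\operatorname{RN}_{1,1,1}\leq d+d=2d\leq 8d=8d\,1^{2\cdot 1}$. So from now on I would fix $N\geq 2$.

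First I would set $c_n:=\operatorname{RN}_{n,N,N}/(dN^n)\geq 0$ (so that $c_0=0$), write $\operatorname{RN}_{l,N,N}=dN^lc_l$, and divide \eqref{eq:RN} (with $M=Q=N$) by $dN^n$. A short computation turns the summand contributions $QM^{n-l}d$, $QM^{n-l}\operatorname{RN}_{l,N,N}$, $QM^{n-l}\operatorname{RN}_{l-1,N,N}$ into $N^{1-l}$, $Nc_l$, $c_{l-1}$ respectively, so that for every $n\in\N$
\[
c_n\;\leq\;1+\sum_{l=0}^{n-1}\bigl(N^{1-l}+Nc_l+\mathbbm{1}_{\N}(l)\,c_{l-1}\bigr).
\]
Since $\sum_{l=0}^{n-1}N^{1-l}=N+\sum_{j=0}^{n-2}N^{-j}<N+\tfrac{N}{N-1}$ and $\sum_{l=1}^{n-1}c_{l-1}\leq\sum_{l=0}^{n-1}c_l$, this yields $c_n\leq \bar A+(N+1)S_n$, where $\bar A:=1+N+\tfrac{N}{N-1}=\tfrac{N^2+N-1}{N-1}$ and $S_n:=\sum_{l=0}^{n-1}c_l$.

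Next I would solve this. From $S_{n+1}=S_n+c_n\leq (N+2)S_n+\bar A$ and $S_1=c_0=0$, a one-line induction gives $S_n\leq \bar A\,\tfrac{(N+2)^{n-1}-1}{N+1}$ for $n\geq 1$, hence $c_n\leq \bar A+(N+1)S_n\leq \bar A\,(N+2)^{n-1}$, i.e.\ $\operatorname{RN}_{n,N,N}\leq \bar A\,d\,N^n(N+2)^{n-1}$ for all $n\geq 1$. Taking $n=N$ and using $\bar A=(N+2)+\tfrac{1}{N-1}$, so that $\tfrac{\bar A}{N+2}=1+\tfrac{1}{N^2+N-2}$, together with $(N+2)^N=N^N(1+\tfrac2N)^N$, this becomes
\[
\operatorname{RN}_{N,N,N}\;\leq\;\Bigl(1+\tfrac{1}{N^2+N-2}\Bigr)\Bigl(1+\tfrac2N\Bigr)^{N}d\,N^{2N}.
\]
It then remains to bound the prefactor by $e^2<8$: taking logarithms, it suffices to show $\ln\!\bigl(1+\tfrac{1}{N^2+N-2}\bigr)+N\ln\!\bigl(1+\tfrac2N\bigr)\leq 2$ for $N\geq 2$. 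Using $\ln(1+x)\leq x$ on the first term and $\ln(1+x)\leq x-\tfrac{x^2}{2}+\tfrac{x^3}{3}$ (valid for $x\geq 0$, since the difference vanishes at $0$ and has derivative $\tfrac{x^3}{1+x}\geq 0$) on the second, the left side is at most $\tfrac{1}{N^2+N-2}+2-\tfrac2N+\tfrac{8}{3N^2}$, and the claim reduces to $\tfrac{3N^2}{N^2+N-2}+8\leq 6N$, which holds because $\tfrac{3N^2}{N^2+N-2}\leq 3$ and $11\leq 6N$ for $N\geq 2$. Hence $\operatorname{RN}_{N,N,N}\leq e^2dN^{2N}<8dN^{2N}$.

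The only delicate point is this last estimate: the bound one naturally reads off is essentially $e^2dN^{2N}$, and $e^2\approx 7.39$ is uncomfortably close to $8$, so one cannot afford to waste even a constant factor — for instance, applying the cruder bound $\ln(1+x)\leq x$ to $(1+\tfrac2N)^N$ would give only a prefactor of roughly $\tfrac54 e^2>8$ for small $N$ and is insufficient, which is why the sharper third-order logarithm inequality is needed. Everything else (the normalization and the geometric-series induction for $S_n$) is routine bookkeeping.
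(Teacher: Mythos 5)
Your proof is correct and follows essentially the same route as the paper: normalize the recursion by the geometric factor, bound the inhomogeneous term by a geometric series, solve the resulting linear recursive inequality (the paper cites a discrete Gronwall-type inequality from Agarwal where you carry out the induction on $S_n$ by hand), and finally bound the constant prefactor by $8$ after setting $n=M=Q=N$. The only notable differences are cosmetic: you specialize to $M=Q=N$ from the outset and your slightly tighter prefactor $\bigl(1+\tfrac{1}{N^2+N-2}\bigr)\bigl(1+\tfrac{2}{N}\bigr)^{N}\leq e^2<8$ requires the third-order logarithm estimate, whereas the paper's prefactor $\tfrac{N}{N-1}\bigl(1+\tfrac{2}{N}\bigr)^{N}$ attains the value $8$ exactly at $N=2$.
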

\begin{proof}
Inequality~\eqref{eq:RN} implies
for all $ n, Q \in \N $, $M \in \N\cap [2,\infty)$
that
\begin{equation}  \begin{split}\label{eq:real_stand_norm}
  (M^{-n}\cdot \operatorname{RN}_{ n,M,Q })
  &\leq d+\sum_{l=0}^{n-1}\left[Q M^{-l}( d + \operatorname{RN}_{ l, M,Q }+ \mathbbm{1}_{ \N }( l ) \cdot\operatorname{RN}_{ l-1, M,Q })\right]\\
  &\leq  d  \left(1+\tfrac{MQ}{M-1}\right)+(1+\tfrac{1}{M})Q\left[\sum_{l=0}^{n-1}(M^{-l}\cdot \operatorname{RN}_{ l, M,Q })\right].
\end{split}     \end{equation}
The fact that $\forall\, M,Q \in \N\colon\operatorname{RN}_{0,M,Q}=0$ and
the discrete Gronwall-type inequality in Agarwal \cite[Corollary 4.1.2]{agarwal2000difference} hence prove
that for all $ n, Q \in \N $, $M \in \N\cap [2,\infty)$ it holds that
\begin{equation}
 (M^{-n}\cdot\operatorname{RN}_{ n,M,Q })
  \leq
  d\left(1+\tfrac{MQ}{M-1}\right)(1+(1+\tfrac{1}{M})Q)^{n-1}
  \leq 
  \tfrac{d(M+(M+1)Q)^{n}}{M^{n-1}(M-1)}.
\end{equation}
Hence, we obtain
that for all $N\in \N\cap [2,\infty)$ it holds that
\begin{equation}
\operatorname{RN}_{ N,N,N }\leq \tfrac{Nd}{N-1}(N+(N+1)N)^N
= \tfrac{N}{N-1}(1+\tfrac{2}{N})^N d N^{2N}
\leq 8 d N^{2N}.
\end{equation}
This and the fact that $\operatorname{RN}_{ 1,1,1 }\leq 2d$ complete
the proof of Lemma~\ref{l:costRN}.
\end{proof}

\begin{lemma}\label{l:costFE}
 Assume the setting in Subsection~\ref{sec:setting.full.discretization}
 and
 let $(\operatorname{FE}_{n,M,Q})_{n,M,Q\in\Z}\subseteq\N_0$ be natural numbers which satisfy
for all $n,M,Q \in \N$
that $\operatorname{FE}_{0,M,Q}=0$
and 
\begin{align}
  \operatorname{FE}_{ n,M,Q }
  &\leq M^n+\sum_{l=0}^{n-1}\left[Q M^{n-l}( 1 + \operatorname{FE}_{ l, M,Q }+ \mathbbm{1}_{ \N }( l )+ \mathbbm{1}_{ \N }( l ) \cdot\operatorname{FE}_{ l-1, M,Q })\right].
  \label{eq:FE}
\end{align}
Then for all $N\in\N$, we have
$$
\operatorname{FE}_{ N,N,N }
\leq  8 N^{2N}.
$$
\end{lemma}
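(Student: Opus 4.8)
The plan is to imitate the proof of Lemma~\ref{l:costRN} almost verbatim, with the parameter $d$ there replaced by $1$ and with the additional indicator summands $\1_{\N}(l)$ appearing in~\eqref{eq:FE} absorbed into the constant term of the resulting recursion. The case $N=1$ is immediate: evaluating~\eqref{eq:FE} at $n=M=Q=1$ and using $\operatorname{FE}_{0,1,1}=0$ and $\1_{\N}(0)=0$ gives $\operatorname{FE}_{1,1,1}\le 1+1\cdot1\cdot(1+0+0+0)=2\le 8$, so from now on I would assume $N\ge 2$.

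For $N\ge 2$ I would argue for arbitrary $Q\in\N$ and $M\in\N\cap[2,\infty)$. Dividing~\eqref{eq:FE} by $M^{n}$, setting $b_{n}:=M^{-n}\operatorname{FE}_{n,M,Q}$, performing the same reindexing of $\sum_{l}Q M^{-l}\1_{\N}(l)\operatorname{FE}_{l-1,M,Q}$ as in~\eqref{eq:real_stand_norm}, and bounding $\sum_{l=0}^{n-1}QM^{-l}(1+\1_{\N}(l))\le Q+\tfrac{2Q}{M-1}=\tfrac{(M+1)Q}{M-1}$ via a geometric series (here the hypothesis $\1_{\N}(0)=0$ is used), I obtain for all $n\in\N$ that
\begin{equation*}
 b_{n}\;\le\;1+\frac{(M+1)Q}{M-1}+\Bigl(1+\tfrac1M\Bigr)Q\sum_{l=0}^{n-1}b_{l}\;\le\;\frac{M+(M+1)Q}{M-1}+\Bigl(1+\tfrac1M\Bigr)Q\sum_{l=0}^{n-1}b_{l}.
\end{equation*}
This is exactly the recursion satisfied by $M^{-n}\operatorname{RN}_{n,M,Q}/d$ in the proof of Lemma~\ref{l:costRN}: the constant $1+\tfrac{MQ}{M-1}$ there is merely replaced by $1+\tfrac{(M+1)Q}{M-1}$, and both are $\le\tfrac{M+(M+1)Q}{M-1}$. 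Since $\operatorname{FE}_{0,M,Q}=0$, the discrete Gronwall-type inequality in Agarwal~\cite[Corollary~4.1.2]{agarwal2000difference}, combined with the identity $1+(1+\tfrac1M)Q=\tfrac{M+(M+1)Q}{M}$, then yields $b_{n}\le\tfrac{(M+(M+1)Q)^{n}}{M^{n-1}(M-1)}$ for all $n\in\N$, exactly as in Lemma~\ref{l:costRN}.

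Finally, specializing to $M=Q=n=N\ge 2$ and multiplying through by $N^{N}$ gives
\begin{equation*}
 \operatorname{FE}_{N,N,N}\;\le\;\frac{N\bigl(N+(N+1)N\bigr)^{N}}{N-1}\;=\;\frac{N}{N-1}\Bigl(1+\tfrac2N\Bigr)^{N}N^{2N}\;\le\;8\,N^{2N},
\end{equation*}
where the last step is the same elementary estimate invoked in the proof of Lemma~\ref{l:costRN} (the map $\N\cap[2,\infty)\ni N\mapsto\tfrac{N}{N-1}(1+\tfrac2N)^{N}$ attains its maximum $8$ at $N=2$). Together with the case $N=1$ this completes the proof. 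I do not anticipate any genuine obstacle: the argument is a routine transcription of Lemma~\ref{l:costRN}, and the one point deserving care is checking that the extra $\1_{\N}(l)$-terms in~\eqref{eq:FE} do not enlarge the constant $\tfrac{M+(M+1)Q}{M-1}$ in the recursion, which holds because they contribute only $\sum_{l=1}^{n-1}QM^{-l}\le\tfrac{Q}{M-1}$.
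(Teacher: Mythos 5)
Your proof is correct and is exactly the argument the paper intends: the paper omits the proof of Lemma~\ref{l:costFE} as "analogous to the proof of Lemma~\ref{l:costRN}", and your transcription (dividing by $M^n$, absorbing the extra $\1_{\N}(l)$ terms into the constant $\tfrac{(M+1)Q}{M-1}\le\tfrac{M+(M+1)Q}{M-1}$, applying the discrete Gronwall inequality, and treating $N=1$ separately) is precisely that analogue with $d$ replaced by $1$.
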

The proof of Lemma~\ref{l:costFE} is analogous to the proof of Lemma~\ref{l:costRN} and therefore omitted.
In the proof of Corollary~\ref{c:rate4a} below we combine Lemma~\ref{l:costRN} and Lemma~\ref{l:costFE} with Corollary~\ref{c:rate3} to obtain a bound for the computational complexity of our scheme \eqref{eq:def:U} in terms of the space dimension and the prescribed approximation accuracy.

The next result, Corollary~\ref{c:rate4a},
proves under suitable assumptions 
that 
if $\eps\in(0,\infty)$ is the prescribed 
approximation accuracy and if $d\in\N$ is the dimension of the considered PDE,
then for every $\alpha \in (0,\nicefrac{1}{4}]$ and every $\delta \in (0,\infty)$ it holds that the computational effort of the approximation method (number of function evaluations of the coefficient functions of the
considered PDE and number of used independent scalar standard normal random variables, cf.\ Section~\ref{sec:comp_cost}) is at most $O(d\,\eps^{-(\frac{1}{\alpha}+\delta)})$.
\begin{corollary}\label{c:rate4a}
 Assume the setting in Subsection~\ref{sec:setting.full.discretization},
 assume that $u^{\infty}\in C^{\infty}([0,T]\times\R^d,\R)$,
 let $\alpha \in (0,\nicefrac{1}{4}]$, $\delta \in (0,\infty)$, 
 let $C\in [0,\infty]$ be the extended
 real number
given by
 \begin{equation}\label{eq:constant_rate4a}
\begin{split}
C&= 16\exp\!\left(2\alpha \delta [e^T(1+2L)]^{\frac{1+\alpha\delta}{2\alpha^2 \delta}}\right)
\Bigg\{ 
    \LipConst\left[\sup_{(t,x)\in[0,T]\times\R^d}|u^{\infty}(t,x)|\right]
    +
     \left[\sup_{x\in\R^d}\left|g(x)\right|\right]
     \\&
     +T\left[\sup_{(t,x)\in[0,T]\times\R^d}\left|(\funcF(0)) (t,x) \right|\right]
      +
    Te^{T}
   \left[\sup_{k\in\N}\sup_{(t,x)\in[0,T]\times\R^d}
   (k!)^{\alpha-1}
     \left|\left(\!(\tfrac{\partial}{\partial r}+\tfrac{1}{2}\Delta_y)^k
     u^{\infty}\right)\!(t,x)\right|\right]\Bigg\}^{\nicefrac{1}{\alpha}+\delta},
    \end{split}
\end{equation}
let $(\operatorname{RN}_{n,M,Q})_{n,M,Q\in\Z}\subseteq\N_0$ be natural numbers
which satisfy
for all $n,M,Q \in \N$
that $\operatorname{RN}_{0,M,Q}=0$
and
\begin{align}
  \operatorname{RN}_{ n,M,Q }
  &\leq d M^n+\sum_{l=0}^{n-1}\left[Q M^{n-l}( d + \operatorname{RN}_{ l, M,Q }+ \mathbbm{1}_{ \N }( l ) \cdot\operatorname{RN}_{ l-1, M,Q })\right]
\end{align}
(for every $N\in\N$ 
 we think of
 $\operatorname{RN}_{ N,N,N }$
 as the number of realizations of a scalar standard normal random variable required
 to compute one realization of the random variable
 $U^{0}_{N,N,N}(0,0)\colon\Omega\to\R$),
and let $( \operatorname{FE}_{n,M,Q})_{n,M,Q\in\Z}\subseteq\N_0$ be natural numbers
which satisfy
for all $n,M,Q \in \N$
that $\operatorname{FE}_{0,M,Q}=$
and
\begin{equation}  \begin{split}
  \operatorname{FE}_{ n,M,Q }
  &\leq M^n+\sum_{l=0}^{n-1}\left[Q M^{n-l}( 1 + \operatorname{FE}_{ l, M,Q }+ \mathbbm{1}_{ \N }( l )+ \mathbbm{1}_{ \N }( l ) \cdot\operatorname{FE}_{ l-1, M,Q })\right]
\end{split}     \end{equation}
(for every $N\in\N$ 
 we think of
 $\operatorname{FE}_{ N,N,N }$
 as the
 number of function evaluations of $f$ and $g$
 required
 to compute one realization of the random variable
 $U^{0}_{N,N,N}(0,0)\colon\Omega\to\R$).
 Then it holds for all $N\in \N$ that
 \begin{equation}  \begin{split}\label{eq:c.rate4a}
 \operatorname{RN}_{N,N,N}+\operatorname{FE}_{N,N,N}
 \le 
 Cd \left[\sup_{(t,x)\in[0,T]\times\R^d}\left\|U_{N,N,N}^{0}(t,x)-u^{\infty}(t,x)\right\|_{L^2(\P;\R)}\right]^{-\left(\nicefrac{1}{\alpha}+\delta\right)}.
 \end{split}     \end{equation}
\end{corollary}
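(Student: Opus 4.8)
The plan is to combine the cost recursions already solved in Lemma~\ref{l:costRN} and Lemma~\ref{l:costFE} with the convergence rate of Corollary~\ref{c:rate3}, and to absorb the leftover $N$-dependent factors through an elementary one-variable maximization. Throughout write $\mathcal{E}_N:=\sup_{(t,x)\in[0,T]\times\R^d}\|U^{0}_{N,N,N}(t,x)-u^{\infty}(t,x)\|_{L^2(\P;\R)}$ and let $c\in[0,\infty]$ denote the quantity playing the role of ``$C$'' in Corollary~\ref{c:rate3}, so that the constant $C$ defined in \eqref{eq:constant_rate4a} equals exactly $16\exp\!\big(2\alpha\delta\,[(1+2L)e^T]^{(1+\alpha\delta)/(2\alpha^2\delta)}\big)\,c^{\,1/\alpha+\delta}$. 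I would first dispose of degenerate cases: if $c\in\{0,\infty\}$ or $\mathcal{E}_N\in\{0,\infty\}$, then \eqref{eq:c.rate4a} either becomes trivial or reduces to a direct comparison using the conventions $0\cdot\infty=0$ and $\tfrac00=0$; so we may and do assume $c,\mathcal{E}_N\in(0,\infty)$.

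The first step is to bound the left-hand side. Since $d\in\N$, Lemma~\ref{l:costRN} and Lemma~\ref{l:costFE} give, for every $N\in\N$,
\begin{equation}
\operatorname{RN}_{N,N,N}+\operatorname{FE}_{N,N,N}\ \le\ 8\,d\,N^{2N}+8\,N^{2N}\ \le\ 16\,d\,N^{2N}.
\end{equation}
The second step is to invert the rate. Corollary~\ref{c:rate3} gives $\mathcal{E}_N\le c\,[(1+2L)e^T]^{N}N^{-2\alpha N}$, hence
\begin{equation}
\mathcal{E}_N^{-(1/\alpha+\delta)}\ \ge\ c^{-(1/\alpha+\delta)}\,[(1+2L)e^T]^{-(1/\alpha+\delta)N}\,N^{\,2\alpha(1/\alpha+\delta)N}.
\end{equation}
The key bookkeeping identity is $2\alpha(1/\alpha+\delta)=2+2\alpha\delta$, so that $N^{2\alpha(1/\alpha+\delta)N}=N^{2N}\cdot N^{2\alpha\delta N}$; rearranging the previous display and dividing by $N^{2\alpha\delta N}$ yields
\begin{equation}
N^{2N}\ \le\ c^{\,1/\alpha+\delta}\left(\frac{[(1+2L)e^T]^{1/\alpha+\delta}}{N^{2\alpha\delta}}\right)^{\!N}\mathcal{E}_N^{-(1/\alpha+\delta)}.
\end{equation}

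The third step is to bound the bracketed factor uniformly in $N$. Putting $A:=[(1+2L)e^T]^{1/\alpha+\delta}$, the map $(0,\infty)\ni x\mapsto x\ln A-2\alpha\delta\,x\ln x$ attains its maximum at $x^{\ast}=e^{-1}A^{1/(2\alpha\delta)}$ with value $2\alpha\delta\,x^{\ast}$, whence
\begin{equation}
\sup_{N\in\N}\left(\frac{A}{N^{2\alpha\delta}}\right)^{\!N}\ \le\ \exp\!\big(2\alpha\delta\,e^{-1}\,A^{1/(2\alpha\delta)}\big)\ \le\ \exp\!\big(2\alpha\delta\,[(1+2L)e^T]^{(1+\alpha\delta)/(2\alpha^2\delta)}\big),
\end{equation}
using $A^{1/(2\alpha\delta)}=[(1+2L)e^T]^{(1/\alpha+\delta)/(2\alpha\delta)}=[(1+2L)e^T]^{(1+\alpha\delta)/(2\alpha^2\delta)}$ and $e^{-1}\le1$. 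Chaining the last three displays gives $\operatorname{RN}_{N,N,N}+\operatorname{FE}_{N,N,N}\le 16\,d\,N^{2N}\le C\,d\,\mathcal{E}_N^{-(1/\alpha+\delta)}$, which is \eqref{eq:c.rate4a}.

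There is no deep step here; the only place to be careful is the exponent arithmetic. One must check that the polynomial cost factor $N^{2N}$ is exactly cancelled by the $N^{(2+2\alpha\delta)N}$ produced by raising the convergence rate $N^{-2\alpha N}$ to the power $1/\alpha+\delta$, so that the surviving $N^{2\alpha\delta N}$ dominates the geometric factor $[(1+2L)e^T]^{(1/\alpha+\delta)N}$ for every $N\in\N$, and that the resulting $N$-uniform supremum reproduces precisely the exponent $(1+\alpha\delta)/(2\alpha^2\delta)$ appearing in \eqref{eq:constant_rate4a}. Everything else is substitution together with the elementary calculus of $x\mapsto x\ln x$, and the invocation of Lemma~\ref{l:costRN}, Lemma~\ref{l:costFE} and Corollary~\ref{c:rate3}.
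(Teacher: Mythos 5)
Your proof is correct and follows essentially the same route as the paper: combine Lemma~\ref{l:costRN} and Lemma~\ref{l:costFE} with Corollary~\ref{c:rate3}, cancel $N^{2N}$ against $N^{-2\alpha N(\nicefrac{1}{\alpha}+\delta)}$, and absorb the surviving factor $\big[(1+2L)e^T\big]^{N(\nicefrac{1}{\alpha}+\delta)}N^{-2\alpha\delta N}$ into an $N$-uniform constant. The only cosmetic difference is that you bound $\sup_{N\in\N}\big(A N^{-2\alpha\delta}\big)^N$ by elementary calculus, whereas the paper uses $N!\le N^N$ together with the exponential series; both yield the same constant $\exp\!\big(2\alpha\delta[(1+2L)e^T]^{(1+\alpha\delta)/(2\alpha^2\delta)}\big)$ and hence the same bound \eqref{eq:c.rate4a}.
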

\begin{proof}
We assume w.l.o.g.\ that $C\in [0,\infty)$. Throughout this proof let $\tilde C \in [0,\infty)$ be the real number given by $\tilde C=\frac{1}{16}\exp\!\left(-2\alpha \delta [e^T(1+2L)]^{\frac{1+\alpha\delta}{2\alpha^2 \delta}}\right)C$. Corollary \ref{c:rate3}, 
Lemma~\ref{l:costRN},
and
Lemma~\ref{l:costFE}
 prove that for all $N\in \N$ it holds that
 \begin{multline}
 \left(\operatorname{RN}_{N,N,N}+\operatorname{FE}_{N,N,N}\right)
\left[    
    \sup_{(t,x)\in[0,T]\times\R^d}\left\|U_{N,N,N}^{0}(t,x)-u^{\infty}(t,x)\right\|_{L^2(\P;\R)}
    \right]^{\nicefrac{1}{\alpha}+\delta}
   \\
    \leq  \left(8dN^{2N}+8N^{2N}\right)\tilde C
    \left[\frac{(1+2L)e^T}{N^{2\alpha}}\right]^{N\left(\nicefrac{1}{\alpha}+\delta\right)}
 = 8(d+1) \tilde C [(1+2L)e^T]^{N\left(\nicefrac{1}{\alpha}+\delta\right)}N^{-2\alpha \delta N}.
 \end{multline}
 This and the fact that $\forall\, N\in \N\colon N!\le N^N$ show that for all $N\in \N$ it holds that
\begin{equation}
\begin{split}
 &\left(\operatorname{RN}_{N,N,N}+\operatorname{FE}_{N,N,N}\right)
\left[    
    \sup_{(t,x)\in[0,T]\times\R^d}\left\|U_{N,N,N}^{0}(t,x)-u^{\infty}(t,x)\right\|_{L^2(\P;\R)}
    \right]^{\nicefrac{1}{\alpha}+\delta}
   \\
    &\leq 16d \tilde C \tfrac{[(1+2L)e^T]^{N\left(\nicefrac{1}{\alpha}+\delta\right)}}{(N!)^{2\alpha \delta}}
    = 16d \tilde C \left[\tfrac{[(1+2L)e^T]^{N\left(\frac{1+\alpha\delta}{2\alpha^2\delta}\right)}}{N!}\right]^{2\alpha \delta}
    \leq 16d \tilde C \left[\sum_{n=0}^\infty \tfrac{[(1+2L)e^T]^{n\left(\frac{1+\alpha\delta}{2\alpha^2\delta}\right)}}{n!}\right]^{2\alpha \delta}\\
    &
    = 16d \tilde C \left[\exp\!\left([(1+2L)e^T]^{\frac{1+\alpha\delta}{2\alpha^2\delta}}\right)\right]^{2\alpha \delta}
    = 16d \tilde C \left[\exp\!\left(2\alpha \delta[(1+2L)e^T]^{\frac{1+\alpha\delta}{2\alpha^2\delta}}\right)\right]=Cd.
    \end{split}
 \end{equation}
This completes the proof of Corollary~\ref{c:rate4a}.
\end{proof}

The next result, Corollary~\ref{c:rate4},
specializes Corollary~\ref{c:rate4a} to the case
$\alpha=\nicefrac{1}{4}$.
\begin{corollary}\label{c:rate4}
 Assume the setting in Subsection~\ref{sec:setting.full.discretization},
 assume that $u^{\infty}\in C^{\infty}([0,T]\times\R^d,\R)$,
 let $\delta \in (0,\infty)$, 
 let $C\in [0,\infty]$ be the extended
 real number
given by
 \begin{equation}\label{eq:constant_rate4}
\begin{split}
C&= 16\exp\!\left( \delta [e^T(1+2L)]^{2+(\nicefrac{8}{\delta})}\right)
\Bigg\{ 
    \LipConst\left[\sup_{(t,x)\in[0,T]\times\R^d}|u^{\infty}(t,x)|\right]
    +
     \left[\sup_{x\in\R^d}\left|g(x)\right|\right]
     \\&\quad
     +T\left[\sup_{(t,x)\in[0,T]\times\R^d}\left|(\funcF(0)) (t,x) \right|\right]
      +
    Te^{T}
   \left[\sup_{k\in\N}\sup_{(t,x)\in[0,T]\times\R^d}
  \frac{\left|\left(\!(\tfrac{\partial}{\partial r}+\tfrac{1}{2}\Delta_y)^k
     u^{\infty}\right)\!(t,x)\right|} {(k!)^{\nicefrac{3}{4}}}
     \right]\Bigg\}^{4+\delta},
    \end{split}
\end{equation}
let $(\operatorname{RN}_{n,M,Q})_{n,M,Q\in\Z}\subseteq\N_0$ be natural numbers
which satisfy
for all $n,M,Q \in \N$
that $\operatorname{RN}_{0,M,Q}=0$
and
\begin{align}
  \operatorname{RN}_{ n,M,Q }
  &\leq d M^n+\sum_{l=0}^{n-1}\left[Q M^{n-l}( d + \operatorname{RN}_{ l, M,Q }+ \mathbbm{1}_{ \N }( l ) \cdot\operatorname{RN}_{ l-1, M,Q })\right]
\end{align}
(for every $N\in\N$ 
 we think of
 $\operatorname{RN}_{ N,N,N }$
 as the number of realizations of a scalar standard normal random variable required
 to compute one realization of the random variable
 $U^{0}_{N,N,N}(0,0)\colon\Omega\to\R$),
and let $( \operatorname{FE}_{n,M,Q})_{n,M,Q\in\Z}\subseteq\N_0$ be natural numbers
which satisfy
for all $n,M,Q \in \N$
that $\operatorname{FE}_{0,M,Q}=$
and
\begin{equation}  \begin{split}
  \operatorname{FE}_{ n,M,Q }
  &\leq M^n+\sum_{l=0}^{n-1}\left[Q M^{n-l}( 1 + \operatorname{FE}_{ l, M,Q }+ \mathbbm{1}_{ \N }( l )+ \mathbbm{1}_{ \N }( l ) \cdot\operatorname{FE}_{ l-1, M,Q })\right]
\end{split}     \end{equation}
(for every $N\in\N$ 
 we think of
 $\operatorname{FE}_{ N,N,N }$
 as the
 number of function evaluations of $f$ and $g$
 required
 to compute one realization of the random variable
 $U^{0}_{N,N,N}(0,0)\colon\Omega\to\R$).
 Then it holds for all $N\in \N$ that
 \begin{equation}  \begin{split}\label{eq:c.rate4}
 \operatorname{RN}_{N,N,N}+\operatorname{FE}_{N,N,N}
 \le 
 Cd \left[\sup_{(t,x)\in[0,T]\times\R^d}\left\|U_{N,N,N}^{0}(t,x)-u^{\infty}(t,x)\right\|_{L^2(\P;\R)}\right]^{-\left(4+\delta\right)}.
 \end{split}     \end{equation}
\end{corollary}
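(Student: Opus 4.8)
The plan is to obtain Corollary~\ref{c:rate4} as the special case $\alpha=\nicefrac{1}{4}$ of Corollary~\ref{c:rate4a}. Since $\nicefrac{1}{4}\in(0,\nicefrac{1}{4}]$, Corollary~\ref{c:rate4a} applies with this value of $\alpha$, it yields the bound~\eqref{eq:c.rate4a} for all $N\in\N$ with the constant from~\eqref{eq:constant_rate4a}, and it then only remains to reconcile that constant with the constant $C$ in~\eqref{eq:constant_rate4}.

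First I would carry out the elementary substitutions $\alpha=\nicefrac14$. Then $\nicefrac{1}{\alpha}+\delta=4+\delta$, so the exponent $-(\nicefrac{1}{\alpha}+\delta)$ on the right-hand side of~\eqref{eq:c.rate4a} becomes $-(4+\delta)$, matching~\eqref{eq:c.rate4}; moreover $\alpha-1=-\nicefrac34$, so the factor $(k!)^{\alpha-1}$ appearing in~\eqref{eq:constant_rate4a} becomes $(k!)^{-\nicefrac34}=\tfrac{1}{(k!)^{\nicefrac34}}$, whence the entire brace $\{\cdots\}^{\nicefrac{1}{\alpha}+\delta}$ in~\eqref{eq:constant_rate4a} coincides with the brace $\{\cdots\}^{4+\delta}$ in~\eqref{eq:constant_rate4}. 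Finally, the exponent inside the exponential prefactor of~\eqref{eq:constant_rate4a} is $\tfrac{1+\alpha\delta}{2\alpha^2\delta}$, which at $\alpha=\nicefrac14$ equals $\tfrac{1+\delta/4}{\delta/8}=\tfrac{8}{\delta}+2=2+\nicefrac{8}{\delta}$, matching the exponent in~\eqref{eq:constant_rate4}.

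The only remaining point is the numerical prefactor. Writing $X=[e^T(1+2\LipConst)]^{2+\nicefrac{8}{\delta}}\ge 0$, the prefactor produced by Corollary~\ref{c:rate4a} at $\alpha=\nicefrac14$ is $16\exp(\tfrac{\delta}{2}X)$, whereas the prefactor in~\eqref{eq:constant_rate4} is $16\exp(\delta X)$; since $X\ge 0$ we have $\exp(\tfrac{\delta}{2}X)\le\exp(\delta X)$, so the constant $C$ in~\eqref{eq:constant_rate4} is at least as large as the constant supplied by Corollary~\ref{c:rate4a} with $\alpha=\nicefrac14$. Because the left-hand side of~\eqref{eq:c.rate4} is nonnegative and the bound furnished by Corollary~\ref{c:rate4a} has the form $(\text{constant})\cdot d\cdot[\cdots]^{-(4+\delta)}$, replacing that constant by the larger constant $C$ preserves the inequality, and~\eqref{eq:c.rate4} follows for all $N\in\N$. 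There is no genuine obstacle; the only thing to watch is the bookkeeping of these substitutions and making sure the constant comparison goes in the direction that weakens (rather than strengthens) the estimate.
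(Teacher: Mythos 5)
Your proposal is correct and is exactly the paper's intended argument: the paper gives no separate proof, describing Corollary~\ref{c:rate4} simply as the specialization of Corollary~\ref{c:rate4a} to $\alpha=\nicefrac14$. Your careful handling of the prefactor (the specialization yields $16\exp(\tfrac{\delta}{2}X)$ while \eqref{eq:constant_rate4} states $16\exp(\delta X)$, which is at least as large, so the estimate only weakens) is the right way to reconcile the constants.
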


\subsubsection*{Acknowledgement}
This project has been partially supported through the research grants ONR N00014-13-1-0338, DOE DE-SC0009248, and
by the Deutsche Forschungsgesellschaft (DFG) via research grant HU 1889/6-1.

%
%
%

{\small
\bibliographystyle{acm}
\bibliography{../Bib/bibfile}
}

\end{document}